\theoremstyle{plain}
\newtheorem{theorem}{Theorem}[section]
\newtheorem{lemma}[theorem]{Lemma}
\newtheorem{proposition}[theorem]{Proposition}
\theoremstyle{definition}
\newtheorem{definition}[theorem]{Definition}
\newtheorem{example}[theorem]{Example}
\DeclareMathOperator{\Int}{Int}
\DeclareMathOperator{\IntR}{Int{}^\text{R}}
\DeclareMathOperator{\minval}{minval}
\renewcommand{\epsilon}{\varepsilon}
\newcommand{\R}{{\mathbb R}}
\newcommand{\N}{{\mathbb N}}
\newcommand{\Q}{{\mathbb Q}}
\newcommand{\F}{{\mathbb F}}
\newcommand{\Z}{{\mathbb Z}}
\newcommand{\m}{\mathfrak{m}}
\newcommand{\M}{\mathfrak{M}}
\definecolor{gray}{rgb}{.5,.5,.5}
\definecolor{black}{rgb}{0,0,0}
\definecolor{blue}{rgb}{0,0,1}
\definecolor{red}{rgb}{1,0,0}
\definecolor{green}{rgb}{0,1,0}
\definecolor{gold}{rgb}{.5,.5,.2}
\definecolor{yellow}{rgb}{1,1,.4}
\definecolor{purple}{rgb}{.5,0,.5}
\definecolor{darkgreen}{rgb}{0,.5,0}
\definecolor{orange}{rgb}{1,.55,0}
\definecolor{white}{rgb}{1,1,1}
\let\originalleft\left
\let\originalright\right
\renewcommand{\left}{\mathopen{}\mathclose\bgroup\originalleft}
\renewcommand{\right}{\aftergroup\egroup\originalright}
\let\originaltodo\todo
\renewcommand{\todo}[1]{\originaltodo[inline]{#1}}
\title{Factorization in rings of integer-valued rational functions}
\author{Baian Liu}
\begin{document}
	
	\maketitle

\begin{abstract}
	For a domain $D$, the ring $\Int(D)$ of integer-valued polynomials over $D$ is atomic if $D$ satisfies the ascending chain condition on principal ideals. However, even for a discrete valuation domain $V$, the ring $\IntR(V)$ of integer-valued rational functions over $V$ is antimatter. We introduce a family of atomic rings of integer-valued rational functions and study various factorization properties in these rings. 
\end{abstract}

\section{Introduction}
\indent\indent Although the ring of integer-valued rational functions over a domain has a definition that is closely related to the more much well-studied ring of integer-valued polynomials, the two rings can behave very differently depending on the base ring. Some of these differences are revealed through the lens of factorization. 

Given a domain $D$ with field of fractions $K$ and $E$ some subset of $K$, we first define 	\[
\IntR(D) \coloneqq \{ \varphi \in K(x) \mid  \forall d \in D, \varphi(d) \in D \} \quad \text{and} \quad	\IntR(E,D) \coloneqq \{ \varphi \in K(x) \mid \forall a \in E, \varphi(a) \in D\}
\]
the \textbf{ring of integer-valued rational functions over $D$} and the \textbf{ring of integer-valued rational functions on $E$ over $D$}, respectively.	Note that $\IntR(D,D) = \IntR(D)$. Compare these definitions to 
\[
\Int(D) \coloneqq \{ f \in K[x]\mid \forall d \in D, f(d) \in D \} \quad \text{and} \quad	\Int(E,D) \coloneqq \{ f \in K[x] \mid \forall a \in E, f(a) \in D\}
\]
the \textbf{ring of integer-valued polynomials over $D$} and \textbf{ring of integer-valued polynomials on $E$ over $D$}, respectively. 

The ring of integer-valued rational functions has ideals that can be defined using the ideals of the base ring. The ideals of $\IntR(E,D)$ relevant to this work are
\[
	\M_{ \m, a } \coloneqq \{\varphi \in \IntR(E,D) \mid \varphi(a) \in \m \} \quad \text{and} \quad \IntR(E,\m) \coloneqq \{\varphi \in \IntR(E,D) \mid \forall a \in E, \varphi(a) \in \m\}
\]
where $\m$ is some maximal ideal of $D$ and $a$ is some element of $E$. Moreover, $\M_{ \m, a }$ is a maximal ideal of $\IntR(E,D)$. 

Whenever we refer to a ring, we are indicating a commutative ring with identity. Likewise, whenever we refer to a monoid, we are indicating a commutative monoid. For a ring $R$, we denote by $R^\bullet$ the multiplicative monoid of nonzero elements of $R$ and $R^\times$ the multiplicative monoid of units of $R$. We also use $\N$ to denote the set of natural numbers including $0$. For a totally ordered abelian group $\Gamma$, we view it as being embedded in its divisible closure $\Q\Gamma \coloneqq \Gamma \otimes_{\Z} \Q$. We also define $\Gamma_{\geq 0} \coloneqq \{\gamma \in \Gamma \mid \gamma \geq 0\}$.

To study factorization in a ring is to study how elements of the ring can be written as a product of irreducible elements, or \textbf{atoms}. A ring in which every nonzero, nonunit element can written as a product of finitely many atoms is \textbf{atomic}. On the other end of the spectrum, if a ring has no atoms, we call it \textbf{antimatter}.

Factorization in rings of integer-valued polynomials has been studied extensively. This can take the form of factorization over general rings of integer-valued polynomials, such as in \cite{RingsBetween,GottiLi}. Factorization over the classical ring of integer-valued polynomials $\Int(\Z)$ has also been studied, with Frisch showing that given any finite multiset of integers greater than or equal to 2, there exists some polynomial in $\Int(\Z)$ whose factorization lengths is exactly the given multiset \cite{Frisch}. This result has been extended to $\Int(D)$, where $D$ is a Dedekind domain with finite residue fields and infinitely many maximal ideals \cite{LengthsDedekind}.

There is seemingly no content in investigating factorization in rings of integer-valued rational functions since even for any discrete valuation domain $V$, the ring $\IntR(V)$ is antimatter \cite[Proposition X.3.3]{Cahen}. However, we introduce a family of domains $D$ with field of fractions $K$ such that $\IntR(K,D)$ is atomic. This allows us to study factorization invariants over this family of rings of integer-valued rational functions.

Let $R$ be a ring and $r \in R$ be a nonzero, nonunit element. Since only the multiplicative structure of the ring is considered in these definitions, these definitions also apply to a commutative monoid. We say two elements $a, b \in R$ are \textbf{associates} if there exists a unit $u \in R$ such that $a = ub$, and we write $a \sim b$. We denote by $\mathcal{A}(R) \coloneqq \{a \in R \mid \text{$a$ is an atom}\}/\sim$. The \textbf{set of factorizations} of $r$ in $R$ is given by 
\[
\mathsf{Z}(r) = \left\{ \left(e_{[a]}\right)_{[a] \in \mathcal{A}(R)} \in \bigoplus_{[a] \in \mathcal{A}(R)} \N \,\middle\vert\, \prod_{[a] \in \mathcal{A}(R)} a^{e_{[a]}} \sim r  \right\}.
\]  
Note that the product is a finite product since for each $\left(e_{[a]}\right)$, there are only finitely many elements $[a] \in \mathcal{A}(R)$ such that $e_{[a]}$ is nonzero. The product is also well-defined up to association. For each factorization $\left(e_{[a]}\right) \in \mathsf{Z}(r)$, we define $\abs{\left(e_{[a]}\right)} \coloneqq \sum\limits_{[a] \in \mathcal{A}(R)} e_{[a]}$ to be the \textbf{length} of the factorization. The \textbf{set of lengths} of $r$ is then defined to be $\mathcal{L}(r) := \{\abs{z} \mid z \in \mathsf{Z}(r) \}$. We also set $L(r) \coloneqq \sup \mathcal{L}(r)$ and $\ell(r) \coloneqq \inf \mathcal{L}(r)$ to be the \textbf{longest factorization length} and the \textbf{shortest factorization length} of $r$ in $R$. 
We say that $R$ is of \textbf{bounded factorization} if $R$ is atomic and $\abs{\mathcal{L}(s)} < \infty$ for each nonzero, nonunit element $s \in R$. Let $z = \left(e_{[a]}\right)$ and $z' = \left(e_{[a]}'\right)$ be two factorizations in $\mathsf{Z}(r)$. We define $\gcd(z, z') = (\min\{e_{[a]}, e_{[a]}'\})$. The \textbf{distance} between the two factorizations of $r$ in $R$ is defined to be $\max\{\abs{z-\gcd(z,z')}, \abs{z' - \gcd(z,z')} \}$. For $N \in \N$, an \textbf{$N$-chain} between $z$ and $z'$ is a finite sequence of factorizations $z = z_0, \dots, z_n = z'$ such that each $z_i \in \mathsf{Z}(r)$ and $d(z_i, z_{i+1}) \leq N$ for all $i < n$. Then we define the \textbf{catenary degree} of $r$ in $R$, denoted as $c(r)$, to be the minimal $N$ such that there exists an $N$-chain between any two elements of $\mathsf{Z}(r)$. For all of the factorization invariants, a subscript may be used to emphasize the ring. For example, we can use $\mathsf{Z}_R(r)$ to denote the set of factorizations of $r$ in $R$.

Let $D$ be a domain with field of fractions $K$. Suppose that $A$ is a domain such that $D \subseteq A \subseteq K[x]$ and $A \cap K = D$. For example, $A = \Int(E,D)$ for any subset $E$ of $K$ satisfies these conditions. In this case, for any $d \in D$ and any atoms $a_1, \dots, a_n$ of $A$ such that $d = a_1 \cdots a_n$, we have that $a_1, \dots, a_n$ are all atoms of $D$. This is because we may view each $d \in D$ as a degree zero polynomial in $A$ and to write a degree zero polynomial as a product of elements in $A$, we must write the degree zero polynomial as a product of degree zero polynomials, which are elements of $D$. In other words, under this setup, factoring degree zero polynomials in $A$ is exactly the same as factoring them in $D$. However, if we have $D \subseteq A$ but $A$ is not contained in $K[x]$, then we will see that it is possible that an element $d \in D$ can be written as the product of elements of $A$, not all of which are in $D$. For this reason, for our family of domains $D$ such that $\IntR(K,D)$ is atomic, we will mostly focus on factorization of elements $d \in D$ viewed as elements of $\IntR(K,D)$. 

In Section \ref{Sect:Valuation}, we discuss factorization in $\IntR(E,V)$ for $V$ a valuation domain and $E$ a subset of the field of fractions. For Section \ref{Sect:Atomicity}, we introduce some computational tools and use them to prove that for a certain family of domains $D$ with field of fractions $K$, the ring $\IntR(K,D)$ is atomic. As for Section \ref{Sect:Lengths}, we show that the set of factorization lengths of an element $d \in D$ changes if we view $d$ as an element of these atomic rings $\IntR(K,D)$. Lastly, in Section \ref{Sect:1toInfinity}, we focus on factorization of atomic rings of the form $\IntR(K,D)$, where $D$ is a localization of a monoid domain whose underlying monoid has a gap when embedded into its difference group which is a additive subgroup of $\R$ with no smallest strictly positive element. 

\section{Factorization in rings of integer-valued rational functions over valuation domains}\label{Sect:Valuation}

\indent\indent For a domain $D$, if $D$ satisfies the ascending chain condition on principal ideals, then the ring $\Int(D)$ of integer-valued polynomials over $D$ is atomic \cite[Corollary 7.6]{RingsBetween}. However, even for a discrete valuation domain $V$, the ring $\IntR(V)$ is not atomic. In fact, $\IntR(V)$ is an antimatter domain \cite[Proposition X.3.3]{Cahen}. The result in \cite[Proposition X.3.3]{Cahen} states that for any $E$ that is a subset of the field of fractions of a discrete valuation domain $V$, the ring $\IntR(E,V)$ is an antimatter domain.  We will show that this is slightly incorrect in Lemma \ref{Lem:AtomsInIntREV}. We will show that in this case, $\IntR(E,V)$ is an antimatter domain if and only if $E$ does not have an isolated point with respect to the topology induced by the valuation. 

Another reason to consider integer-valued rational functions over valuation domains is that we have tools such as continuity to construct integer-valued rational functions with control in terms of the values the functions attain. 

\begin{proposition}\label{Prop:RationalFunctionsContinuousInValuationTopology}\cite[Proposition 2.1]{Liu}
	Let $D$ be a domain with field of fractions $K$ and $E$ a subset of $K$. Let $v$ be a valuation on $K$ with value group $\Gamma$ such that the associated valuation ring $V$ contains $D$. Then each element of $\IntR(E,D)$ is a continuous function from $E$ to $D$ with respect to the topology induced by the valuation. 
\end{proposition}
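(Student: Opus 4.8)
The plan is to reduce the statement to the standard fact that rational evaluation is continuous in the valuation topology, using the hypothesis $\varphi \in \IntR(E,D)$ only to rule out poles on $E$. First I would fix $\varphi \in \IntR(E,D)$ and write it in lowest terms as $\varphi = f/g$ with $f,g \in K[x]$ coprime; after multiplying numerator and denominator by a common scalar in $K^\times$ I may assume $f,g \in V[x]$. Because $\varphi(a) \in D \subseteq V$ is finite for every $a \in E$ and $f,g$ share no factor, the denominator cannot vanish on $E$, so $g(a) \neq 0$ for all $a \in E$. This is the only place the integer-valuedness enters: it guarantees that $\varphi|_E$ is genuinely a function into $D$ to which the continuity argument applies.

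Next I would fix a point $a \in E$ and estimate the valuation of $\varphi(b) - \varphi(a)$ for $b \in E$ near $a$. Writing
\[
\varphi(b) - \varphi(a) = \frac{f(b)g(a) - f(a)g(b)}{g(a)g(b)},
\]
the numerator $P(x) \coloneqq f(x)g(a) - f(a)g(x)$ is a polynomial vanishing at $x = a$, so $P(x) = (x-a)Q(x)$ for some $Q \in K[x]$. Hence
\[
v\big(\varphi(b) - \varphi(a)\big) = v(b - a) + v(Q(b)) - v(g(a)) - v(g(b)),
\]
where $v(g(a)) \in \Gamma$ is a fixed finite quantity since $g(a) \neq 0$. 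It then suffices to show that $v(Q(b))$ is bounded below and $v(g(b))$ is bounded above as $b$ ranges over a small neighborhood of $a$; granting this, the right-hand side is at least $v(b-a)$ plus a constant depending only on $a$, which can be made arbitrarily large by shrinking the neighborhood, giving continuity at $a$.

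The technical heart is therefore the local boundedness of polynomial evaluation, which I would isolate as a lemma: for any $h \in K[x]$ and any $a \in K$ there is a neighborhood $U$ of $a$ and $\beta \in \Gamma$ with $v(h(b)) \geq \beta$ for all $b \in U$, and moreover $v(h(b)) = v(h(a))$ once $b$ is close enough provided $h(a) \neq 0$. This rests on the telescoping identity $b^i - a^i = (b-a)(b^{i-1} + b^{i-2}a + \cdots + a^{i-1})$ together with the ultrametric inequality: when $v(b-a) > v(a)$ one has $v(b) = v(a)$, so every power of $b$ occurring in $h$ has controlled valuation, forcing $v(h(b) - h(a)) \geq v(b-a) + c$ for a constant $c$ depending on $h$ and $a$. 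Applying this to $h = Q$ (bounded below) and to $h = g$ (where $g(a) \neq 0$ forces $v(g(b)) = v(g(a))$ for $b$ close) completes the estimate. I expect the main obstacle to be organizing these ultrametric bounds uniformly, in particular handling points $a \in E \setminus V$ with $v(a) < 0$ (and the degenerate case $a = 0$), where one must use that a small enough ball around $a$ pins down $v(b)$ so that the polynomial values stay controlled.
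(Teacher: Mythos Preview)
The paper does not give its own proof of this proposition; it is simply quoted from the reference \cite[Proposition 2.1]{Liu}, so there is no in-paper argument to compare against. Your proposal is the standard route to continuity of a rational function at a non-pole in the valuation topology: reduce to $\varphi=f/g$ with $g$ nonvanishing on $E$, use the difference-quotient identity to extract a factor $b-a$, and control the remaining polynomial pieces by the ultrametric inequality. The steps you outline are correct, including the reduction to $f,g\in V[x]$ by scaling (choose $c\in K^\times$ with $v(c)$ minimal among the coefficients of $f$ and $g$ and replace $f,g$ by $c^{-1}f,c^{-1}g$) and the observation that coprimality in $K[x]$ forces $g(a)\neq 0$ whenever $\varphi(a)\in D$. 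The auxiliary lemma you isolate, that for $h\in K[x]$ with $h(a)\neq 0$ one has $v(h(b))=v(h(a))$ once $v(b-a)$ is large enough, is exactly what makes the denominator harmless; the anticipated difficulty with $v(a)<0$ or $a=0$ is not a genuine obstacle, since the telescoping identity and the ultrametric bound $v(b)=v(a)$ for $v(b-a)>v(a)$ (respectively $v(b)\geq$ a chosen threshold when $a=0$) already cover those cases uniformly.
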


The result of \cite[Proposition X.3.3]{Cahen} tells us when $\IntR(E,V)$ is an antimatter domain when $V$ is a discrete valuation domain and $E$ is a subset of the field of fractions of $V$ where $E$ does not $E$ does not have an isolated point with respect to the topology induced by the valuation. This statement is still true if $V$ is a valuation domain with principal maximal ideal. 

\begin{lemma}\label{Lem:StoneWeierstrass}
	Let $V$ be a valuation domain with principal maximal ideal. Suppose that $E$ is some nonempty subset of the field of fractions of $V$. Let $\varphi \in \IntR(E,V)$ be a nonzero, nonunit element. If there exists an $a \in E$ such that $\varphi(a)$ is not a unit and $a$ is not an isolated point of $E$ with respect to the topology induced by the valuation, then $\varphi$ is strictly divisible by a nonunit element of $\IntR(E,V)$. 
\end{lemma}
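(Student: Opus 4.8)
The plan is to peel off from $\varphi$ a single copy of the uniformizer, localized near the point $a$, while keeping the factor a unit near a second nonunit point. Write $\m = tV$ and let $v$ be the valuation with value group $\Gamma$ as in Proposition \ref{Prop:RationalFunctionsContinuousInValuationTopology}; since $\m$ is principal, $v(t)$ is the least strictly positive element of $\Gamma$. First I would record the unit structure: an element $\psi \in \IntR(E,V)$ is a unit precisely when $\psi(y) \in V^\times$ for every $y \in E$, so a nonunit is exactly a function taking a value in $\m$ somewhere, and ``$\varphi$ is strictly divisible by a nonunit'' means exhibiting a factorization $\varphi = \psi\chi$ in $K(x)$ with both $\psi,\chi \in \IntR(E,V)$ nonunits.

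The next step is to produce \emph{two} nonunit points. By Proposition \ref{Prop:RationalFunctionsContinuousInValuationTopology}, $\varphi$ is continuous on $E$, and since $\varphi(a)\in\m$ while $\m$ is a clopen ball in the ultrametric topology, the preimage $\varphi^{-1}(\m)$ is an open neighborhood of $a$ in $E$. Because $a$ is not isolated, this neighborhood contains some $b \in E$ with $b \ne a$, and then $\varphi(b)\in\m$ as well. Thus I obtain two distinct points $a,b\in E$, as close together as I like, at both of which $\varphi$ is a nonunit.

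I would then construct the factor explicitly. Setting $\sigma = v(b-a)$, I look for $\psi = t + (1-t)g$, where $g \in \IntR(E,V)$ is a linear fractional function with $g(a)=0$, $g(b)\in V^\times$, $v(g)\ge 0$ on all of $E$, and $v(g)$ large (hence $\ge v(t)$) on a small ball about $a$; concretely $g(x) = (x-a)/(x-a+c)$ for a suitable shift $c$ with $v(c)=\sigma$. Then $\psi(a) = t \in \m$ makes $\psi$ a nonunit, while $\psi(b)=t+(1-t)g(b)$ has valuation $0$, so $\psi$ is a unit at $b$; consequently the cofactor $\chi = \varphi/\psi$ satisfies $\chi(b) = \varphi(b)/\psi(b)\in\m$ and is a nonunit. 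It then remains to verify $\chi \in \IntR(E,V)$, i.e. $v(\psi(y))\le v(\varphi(y))$ for all $y\in E$: where $\psi$ is a unit this is automatic, and on the small ball about $a$ where $\psi$ is a nonunit, continuity forces $\varphi$ to be a nonunit too, so $v(\varphi(y))\ge v(t)$, while the least-positive-value property pins $v(\psi(y)) = v(t)$ there.

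The hard part will be this uniform pointwise control of $\psi$ on all of $E$ at once, namely showing both $v(\psi)\ge 0$ everywhere and $v(\psi)\le v(\varphi)$ everywhere. The danger is a single ultrametric sphere on which the numerator and denominator of $g$, or the two summands $t$ and $(1-t)g$, nearly cancel and momentarily push $v(\psi)$ below $0$ or above $v(t)$. I would dispose of this by using that $a$ is a non-isolated point to choose the scale $\sigma$ (equivalently the shift $c$) so that the finitely many exceptional points where such cancellation occurs are bounded away from $E$; the ultrametric inequality then forces $v(\psi(y)) \in \{0, v(t)\}$ for every $y\in E$. Should a single linear fractional factor fail to dodge these points for a given $E$, the same effect is obtained by a Stone--Weierstrass approximation of the clopen indicator of a small ball about $a$, which is the real engine behind the lemma. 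Once $\psi$ is shown to lie in $\IntR(E,V)$ with nonunit cofactor, the identity $\varphi = \psi\cdot(\varphi/\psi)$ exhibits the required strict divisibility.
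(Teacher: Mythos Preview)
Your overall strategy matches the paper's: use continuity and the non-isolation of $a$ to locate a second point $b\in E$ with $\varphi(b)\in\m$, build an auxiliary $\psi\in\IntR(E,V)$ whose value set is $\{0,v(t)\}$ with $\psi$ a nonunit at $a$ but a unit at $b$, and then factor $\varphi=\psi\cdot(\varphi/\psi)$. The divergence is entirely in how $\psi$ is produced, and that is where your argument has a genuine gap.

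After simplification your candidate is
\[
\psi(y)\;=\;t+(1-t)\,\frac{y-a}{y-a+c}\;=\;\frac{(y-a)+tc}{(y-a)+c},\qquad v(c)=\sigma.
\]
The trouble cases are $v(y-a)=\sigma$ and $v(y-a)=\sigma+v(t)$: in the first, the denominator can cancel and push $v(\psi(y))<0$; in the second, the numerator can cancel and push $v(\psi(y))>v(t)$. These exceptional sets are the open balls $\{y:v(y-(a-c))>\sigma\}$ and $\{y:v(y-(a-tc))>\sigma+v(t)\}$, not a finite collection of points, so your plan to ``choose $c$ so the finitely many exceptional points are bounded away from $E$'' does not go through as stated; for a general $E$ there is no reason these balls miss $E$. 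The appeal to Stone--Weierstrass is suggestive but not a proof.

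The paper closes this gap with a one-line algebraic trick that eliminates the exceptional sets entirely: replace the linear pieces by cubes and use
\[
\psi(x)\;=\;\frac{(x-a)^3+c^3t^2}{(x-a)^3+c^3t}.
\]
Now $v\bigl((y-a)^3\bigr)=3v(y-a)\in 3\Gamma$, whereas $v(c^3t)=3v(c)+v(t)$ and $v(c^3t^2)=3v(c)+2v(t)$. Since $v(t)$ is the least strictly positive element of $\Gamma$, neither $v(t)/3$ nor $2v(t)/3$ lies in $\Gamma$, so $3v(y-a)$ can never equal $v(c^3t)$ or $v(c^3t^2)$. The strict ultrametric inequality therefore applies at \emph{every} $y\in K$, and one gets $v(\psi(y))\in\{0,v(t)\}$ with no exceptional cases at all. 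With $c=b$ chosen so that $v(b)$ exceeds the continuity radius $\delta$, the inequalities $v(\psi)\ge 0$ and $v(\psi)\le v(\varphi)$ on $E$ follow immediately, and $\psi(b)$ is a unit while $\psi(0)=t$ is not. This is the missing idea in your construction; once you cube, the ``hard part'' you flagged disappears.
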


\begin{proof}
	Let $\varphi \in \IntR(E,V)$ and suppose that there exists an $a \in E$ such that $\varphi(a)$ is not a unit and $a$ is not an isolated point of $E$. By applying the isomorphism $\IntR(E,V) \to \IntR(E-a,V)$ defined by $\rho(x) \mapsto \rho(x + a)$, we can assume without loss of generality that $a = 0$, $0 \in E$, and $0$ is not an isolated point of $E$. 
	
	Let $v$ be the valuation associated with $V$ and $\m$ be the maximal ideal of $V$, generated by some $t \in V$. Since $\varphi(0) \in \m$, there exists some $\delta \in \Gamma$ such that $v(\varphi(d)) \geq v(t)$ for all $d \in E$ with the property that $v(d) > \delta$ due to Proposition \ref{Prop:RationalFunctionsContinuousInValuationTopology}. Because $0$ is not an isolated point, we know that there exists some $b \in E$ such that $v(b) > \delta$. Construct
	\[\psi(x) = \frac{x^3+b^3t^2}{x^3+b^3t}.\]
	Note that $\frac{v(b^3t)}{3} = v(b) + \frac{v(t)}{3} \notin \Gamma$ and similarly $\frac{v(b^3t^2)}{3} \notin \Gamma$. Also notice that there is no element of $\Gamma$ between $\frac{v(b^3t)}{3}$ and $\frac{v(b^3t^2)}{3}$ since there is no element of $\Gamma$ between $\frac{v(t)}{3}$ and $\frac{v(t^2)}{3}$. Then for each $d \in E$, we calculate
	\[
	v(\psi(d)) = \begin{cases}
		0, &\text{if $v(d) < \frac{v(b^3t)}{3}$},\\
		v(b^3t^2)-v(b^3t) = v(t) > 0, &\text{if $v(d) > \frac{v(b^3t^2)}{3}$}.
	\end{cases}
	\]
	Using the fact that $\psi(0) \in \m$, we have that $\psi \in \IntR(E,V)$ and $\psi$ is not a unit of $\IntR(E,V)$. Additionally, for $d \in E$ such that $v(d) < \delta$, we have that $v(\psi(d)) = 0$ since $\delta < v(b) < \frac{v(b^3t)}{3}$. We also know that for $d \in E$ with $v(d) \geq \delta$ that $v(\psi(d)) \leq v(t)$. This implies that $\frac{\varphi}{\psi} \in \IntR(E,V)$ since $v\left(\frac{\varphi}{\psi}(d)\right) \geq 0$ regardless whether $v(d) < \delta$ or $v(d) \geq \delta$. Lastly, we check that $v\left(\frac{\varphi}{\psi}(b)\right) = v(\varphi(b)) > 0$. Thus, $\frac{\varphi}{\psi}$ is not a unit of $\IntR(E,V)$ either, which means that $\varphi$ is strictly divisible by $\psi$. 
\end{proof}

What follows is a technical lemma that constructs integer-valued rational functions over a valuation domain with even more control on the values attained. 

\begin{lemma}\label{Lem:ZigZag}
	Let $V$ be a valuation domain with a maximal ideal that is not principal, and suppose that the residue field of $V$ is algebraically closed or the value group is not divisible. Denote by $K$ the field of fractions of $V$ and $\Gamma$ the associated value group. For any $\alpha, \alpha' \in \Gamma_{\geq 0}$ with $\alpha < \alpha'$ and $\varepsilon \in \Q\Gamma$ with $\varepsilon > 0$, there exist $\varphi_{\alpha, \alpha',\varepsilon} \in \IntR(K,V)$; $\beta, \beta' \in \Q\Gamma$; and $\alpha'' \in \Gamma$ with $\alpha' - \varepsilon < \alpha'' \leq \alpha'$ so that for all $d \in K$, we have
	\[
	v(\varphi_{\alpha, \alpha', \varepsilon}(d)) = \begin{cases}
		\alpha, & \text{if } v(d) < \beta, \\
		\alpha'', & \text{if } v(d) > \beta',
	\end{cases}
	\]
	and $\alpha \leq  v(\varphi_{\alpha, \alpha', \varepsilon}(d)) \leq \alpha''$ if $\beta \leq v(d) \leq \beta'$. If the residue field of $V$ is not algebraically closed and $\Gamma$ is divisible, we can take $\alpha'' = \alpha'$. 
\end{lemma}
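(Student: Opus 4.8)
The plan is to reduce to the case $\alpha = 0$ and then build a single rational ``step'' function. Since $\alpha \in \Gamma_{\geq 0}$, I would choose $s \in V$ with $v(s) = \alpha$; if $f \in \IntR(K,V)$ satisfies $v(f(d)) = 0$ for small $v(d)$ and $v(f(d)) = \gamma$ for large $v(d)$, with $0 \le v(f(d)) \le \gamma$ in between, then $\varphi_{\alpha,\alpha',\varepsilon} = s f$ has $v(\varphi(d)) = \alpha + v(f(d))$ and meets the requirements with $\alpha'' = \alpha + \gamma$. So it suffices to produce, for a target increase $\gamma \in \Gamma_{> 0}$ with $\alpha' - \alpha - \varepsilon < \gamma \le \alpha' - \alpha$, a function $f$ realizing the clean step from $0$ to $\gamma$.

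The key structural constraint is that $f$ must have finite, nonnegative valuation at \emph{every} point of $K$: a zero of $f$ in $K$ would force $v(f) = +\infty$, violating the upper bound, while a pole would force $v(f) < 0$. Hence I would take $f = N/M$ with $N, M$ polynomials having no roots in $K$, and arrange that $v(f(d))$ depends only on $v(d)$ (no residue-level cancellation), so that the three prescribed regimes come out cleanly. The natural building block is
\[
f(x) = \frac{x^n + c}{x^n + c'}, \qquad v(c) > v(c'),
\]
for which, in the absence of cancellation, the ultrametric inequality gives
\[
v(f(d)) = \min(n v(d), v(c)) - \min(n v(d), v(c')),
\]
which is $0$ when $n v(d) < v(c')$, equals $\gamma := v(c) - v(c')$ when $n v(d) > v(c)$, and lies in $[0,\gamma]$ in between, with breakpoints at $v(d) = v(c)/n$ and $v(d) = v(c')/n$.

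Two of the three cases are then immediate. If $\Gamma$ is not divisible, I would choose $n$ and $c, c'$ so that $v(c)/n, v(c')/n \in \Q\Gamma \setminus \Gamma$; then no $d \in K$ can sit at a breakpoint, so there is no cancellation, and neither polynomial has a root in $K$ (a root would have valuation $v(c)/n \notin \Gamma$), giving a clean step. Because the breakpoint valuations must be perturbed into $\Q\Gamma \setminus \Gamma$, the increase $\gamma$ can only be prescribed up to the tolerance $\varepsilon$, which is exactly where the approximate conclusion $\alpha' - \varepsilon < \alpha'' \le \alpha'$ comes from; the absence of a least positive element of $\Gamma$ (equivalent to the maximal ideal being non-principal) provides the room to land within $\varepsilon$. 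If instead $\Gamma$ is divisible but the residue field is not algebraically closed, I would replace the building block by $f(x) = P(c_1/x)/P(c_2/x)$, where $P \in V[x]$ is a monic lift of a degree-$m$ irreducible polynomial over the residue field with $m \ge 2$. Such a $P$ has no root in $K$ and satisfies $v(P(y)) = 0$ for all $y \in V$ (its reduction has no residue root), so again there is no cancellation; the analogous computation gives a clean step with $\gamma = m(v(c_1) - v(c_2))$, and divisibility of $\Gamma$ lets me hit $\gamma = \alpha' - \alpha$ on the nose, yielding the exact statement $\alpha'' = \alpha'$.

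The hard part is the remaining case, where the residue field is algebraically closed and $\Gamma$ is divisible. Here neither earlier device is available: breakpoints cannot be pushed off $\Gamma$, and there is no residue-irreducible polynomial of degree $\ge 2$, so every candidate numerator reduces to a product of linear factors and some residue-level cancellation at the breakpoint is forced. The plan is to keep this cancellation bounded: choose $c$ (and $n$) so that $x^n + c$ still has no root in $K$, but its roots $\rho$ in $\overline{K}$ are approximated by $K$ only slightly better than trivially, that is, $\sup_{d \in K} v(d - \rho) < v(c)/n + \varepsilon$. Then the extra valuation produced by cancellation is at most $\varepsilon$, so $f$ still realizes the step up to the tolerance $\varepsilon$, which is precisely why only the approximate conclusion is claimed in this case. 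Verifying that the algebraically closed residue field together with the divisible value group guarantees the existence of such a poorly approximated configuration --- in effect a controlled defect estimate --- is the main obstacle, and is where I expect the technical heart of the argument to lie. Finally, in every case I would record the resulting $\beta, \beta' \in \Q\Gamma$ (the breakpoint valuations) and verify directly that $\varphi = s f$ satisfies all the stated valuation conditions and lies in $\IntR(K,V)$.
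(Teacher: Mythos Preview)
You have misread the hypothesis --- almost certainly because the statement as printed contains a typo. The intended assumption is that the residue field is \emph{not} algebraically closed or the value group is not divisible (equivalently, the case ``algebraically closed residue field \emph{and} divisible value group'' is excluded). This reading is forced by the lemma's own final sentence, which singles out the case ``residue field not algebraically closed and $\Gamma$ divisible'' for the sharper conclusion $\alpha'' = \alpha'$, and by the way the lemma is invoked later (e.g.\ in Proposition~\ref{Prop:Antimatter} and Theorem~\ref{Thm:ExtendSetOfLengths}, both of which carry exactly this hypothesis). Under the corrected hypothesis there are only two cases --- $\Gamma$ not divisible, or $\Gamma$ divisible with non-algebraically-closed residue field --- and the paper's proof treats precisely these.

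Your handling of those two cases is essentially the paper's. For $\Gamma$ not divisible, the paper takes a prime $p$ with $\mu/p \notin \Gamma$ and uses $\dfrac{b(x^p + cc')}{x^p + c'}$ with $v(c') = \mu$, which is exactly your building block $(x^n+c)/(x^n+c')$ with the breakpoints $v(c)/n,\,v(c')/n$ pushed into $\Q\Gamma \setminus \Gamma$; the need to perturb and the role of the non-principal maximal ideal in landing within $\varepsilon$ are as you describe. For $\Gamma$ divisible with non-algebraically-closed residue field, the paper lifts a monic irreducible over the residue field to a unit-valued $f \in V[x]$ and uses $\dfrac{b\,c^n f(x/c)}{f(x)}$, which differs only cosmetically from your $P(c_1/x)/P(c_2/x)$ and gives $\alpha'' = \alpha'$ on the nose by divisibility.

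Consequently your ``hard case'' (algebraically closed residue field and divisible $\Gamma$) is not part of the lemma at all, and the defect-controlled approximation scheme you sketch for it --- which you yourself flag as the main obstacle, and for which your outline is not a proof --- is simply not needed. Once the hypothesis is read correctly there is no technical heart left: the two direct constructions you already have are the whole argument.
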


\begin{proof}
	Fix $\alpha, \alpha', \varepsilon \in \Gamma_{\geq 0}$. We split into two cases. The first case is concerned with when the residue field of $V$ is not algebraically closed and $\Gamma$ is divisible. The second case is concerned with when $\Gamma$ is not divisible. In both cases, we construct $\varphi_{\alpha, \alpha',\varepsilon}$ directly, but in the first case, the value of $\varepsilon$ is not used in the construction of $\varphi_{\alpha, \alpha',\varepsilon}$. 
	
	\begin{enumerate}
		\item We assume that the residue field of $V$ is not algebraically closed and $\Gamma$ is divisible. Since the residue field of $V$ is not algebraically closed, there exists a monic, nonconstant polynomial with coefficients in the residue field that has no roots. This polynomial lifts to a monic nonconstant polynomial $f \in V[x]$ such that $f(V) \subseteq V^\times$. Let $n \coloneqq \deg(f)$. Take $b \in V$ such that $v(b) = \alpha$ and $c \in V$ such that $v(c) = \frac{\alpha'-\alpha}{n}$. Then we construct
		\[
		\varphi_{\alpha, \alpha', \varepsilon}(x) = \frac{bc^nf\left(\frac{x}{c}\right)}{f(x)}. 
		\]
		Since $v\left(a_1^n f\left(\frac{a_2}{a_1}\right)\right) = \min\{nv(a_1), nv(a_2)\}$ for each $a_1, a_2 \in K$ with $a_1 \neq 0$ \cite[Corollary 2.3]{PruferNonDRings}, we calculate that for each $d \in K$, we have
		\[
		v(\varphi_{\alpha, \alpha', \varepsilon}(d)) = \begin{cases}
			\alpha, & \text{if $v(d) < 0$},\\
			\alpha + nv(d), & \text{if $0 \leq v(d)\leq \frac{\alpha'-\alpha}{n}$},\\
			\alpha + nv(c) = \alpha', & \text{if $v(d) > \frac{\alpha'-\alpha}{n}$}.
		\end{cases}
		\]
		We can confirm that for $d \in K$ such that $0 \leq v(d) \leq \frac{\alpha' - \alpha}{n}$, we have that $0 \leq nv(d) \leq \alpha' - \alpha$ and therefore, $\alpha \leq \alpha + nv(d) \leq \alpha'$. 
		
		This gives the desired function by taking $\beta = 0$, $\beta' = \frac{\alpha'-\alpha}{n}$, and $\alpha'' = \alpha'$. 
		
		\item Now we assume that $\Gamma$ is not divisible. Then there exist some prime $p$ and $\mu \in \Gamma$ with $\mu > 0$ such that $\frac{\mu}{p} \notin \Gamma$. Since the maximal ideal of $V$ is not principal, we have that there exists some $\alpha'' \in \Gamma$ such that $\max\{\alpha' - \epsilon, \alpha\} < \alpha'' \leq \alpha'$ and $\frac{\alpha''-\alpha}{p} \in \Gamma$. Note that $\frac{\mu + \alpha''-\alpha}{p} \notin \Gamma$. We now consider
		\[
		\varphi_{\alpha,\alpha',\varepsilon}(x) = \frac{b(x^p+cc')}{x^p+c'},
		\]
		where $b \in V$ is some element such that $v(b) = \alpha$, $c \in V$ is some element such that $v(c) = \alpha'' - \alpha$, and $c' \in V$ is such that $v(c') = \mu$. For each $d \in K$, we have
		\[
		v(\varphi_{\alpha, \alpha',\varepsilon}(d)) = \begin{cases}
			\alpha, & \text{if $v(d) < \frac{\mu}{p}$},\\
			\alpha + pv(d) - \mu, & \text{if $\frac{\mu}{p} < v(d) < \frac{\mu+\alpha''-\alpha}{p}$ },\\
			\alpha + \alpha'' - \alpha + \mu - \mu = \alpha'',& \text{if $v(d) > \frac{\mu+\alpha''-\alpha}{p}$}.
		\end{cases}
		\]
		Notice that $\frac{\mu}{p} < v(d) < \frac{\mu+\alpha''-\alpha}{p}$ implies that $\alpha < \alpha + pv(d) - \mu < \alpha''$. Thus, we have the desired function taking $\beta = \frac{\mu}{p}$ and $\beta' = \frac{\mu+\alpha''-\alpha}{p}$. 
	\end{enumerate}
\end{proof}

Now we use the previous two lemmas to prove that for a large class of valuation domains $V$, rings of the form $\IntR(E,V)$ is an antimatter domain. 

\begin{proposition}\label{Prop:Antimatter}
	Let $V$ be a valuation domain and let $E$ be a nonempty subset of the field of fractions of $V$. If the residue field of $V$ is not algebraically closed or the value group is not divisible, then $\IntR(E,V)$ is an antimatter domain, unless $E$ has an isolated point with respect to the topology induced by the valuation and the maximal ideal of $V$ is principal. 
\end{proposition}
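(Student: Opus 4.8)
The plan is to show that every nonzero, nonunit $\varphi \in \IntR(E,V)$ factors as a product of two nonunits, so that $\IntR(E,V)$ has no atoms. The starting point is the observation that a nonzero $\varphi$ is a unit of $\IntR(E,V)$ exactly when $\varphi(a) \in V^\times$ for every $a \in E$: in that case $1/\varphi$ has no pole on $E$ and takes values in $V$, so $1/\varphi \in \IntR(E,V)$. Hence a nonunit $\varphi$ must satisfy $\varphi(a) \in \m$ for some $a \in E$, where $\m$ is the maximal ideal of $V$. Applying the isomorphism $\IntR(E,V) \to \IntR(E-a,V)$ given by $\rho(x) \mapsto \rho(x+a)$, which carries atoms to atoms, I may assume $a = 0 \in E$ and $\varphi(0) \in \m$. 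Writing $v$ for the valuation and $\Gamma$ for the value group, set $\gamma \coloneqq v(\varphi(0)) > 0$ (with $\gamma = \infty$ allowed, when $\varphi(0) = 0$). I would then split according to whether $\m$ is principal.

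If $\m$ is principal, then in this branch $E$ has no isolated points, so $0$ is in particular not an isolated point of $E$. Since $\varphi(0)$ is a nonunit, Lemma~\ref{Lem:StoneWeierstrass} applies verbatim and yields a nonunit $\psi \in \IntR(E,V)$ strictly dividing $\varphi$, displaying $\varphi$ as a product of two nonunits. Notice that this branch uses neither the residue-field nor the divisibility hypothesis; those are needed only in the non-principal case.

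If $\m$ is not principal, then $\Gamma$ has no least positive element, and the hypothesis that the residue field is not algebraically closed or $\Gamma$ is not divisible is exactly what is needed to invoke Lemma~\ref{Lem:ZigZag}. I would first choose $\alpha' \in \Gamma$ with $0 < \alpha' < \gamma$, which exists because $\Gamma$ has no least positive element, together with any $\varepsilon > 0$; the conclusion of Lemma~\ref{Lem:ZigZag} then provides $\psi_0 \coloneqq \varphi_{0,\alpha',\varepsilon}$ whose associated $\alpha''$ satisfies $0 < \alpha'' \le \alpha' < \gamma$, which is unit-valued where $v(d) < \beta$, has valuation $\alpha''$ where $v(d) > \beta'$, and has valuation at most $\alpha''$ in between. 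By Proposition~\ref{Prop:RationalFunctionsContinuousInValuationTopology}, continuity of $\varphi$ at $0$ furnishes a threshold $\delta_0 \in \Gamma$ with $v(\varphi(d)) \ge \alpha''$ whenever $v(d) > \delta_0$ (indeed $v(\varphi(d)) = \gamma$ there when $\gamma$ is finite). I would then rescale, setting $\psi(x) \coloneqq \psi_0(x/c)$ for some $c$ with $v(c) = \kappa$ large enough that $\beta + \kappa > \delta_0$; this keeps $\psi \in \IntR(K,V) \subseteq \IntR(E,V)$ and pushes the region where $\psi$ fails to be unit-valued into $\{\, v(d) > \delta_0 \,\}$. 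Since the values of $\psi$ have finite valuation (so $\psi$ does not vanish on $K$), one then checks $v(\psi(d)) \le v(\varphi(d))$ for all $d \in E$: trivially where $v(\psi(d)) = 0$, and by the threshold estimate where $v(\psi(d)) \le \alpha''$. Thus $\varphi/\psi \in \IntR(E,V)$. Finally $\psi$ is a nonunit because $v(\psi(0)) = \alpha'' > 0$, and $\varphi/\psi$ is a nonunit because $v\!\left((\varphi/\psi)(0)\right) = \gamma - \alpha'' > 0$, so $\varphi$ is again a product of two nonunits.

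The two cases cover exactly the stated hypothesis, since ``$\m$ not principal'' together with ``$\m$ principal and $E$ has no isolated point'' is logically equivalent to ``$\m$ not principal or $E$ has no isolated point,'' which is the negation of the exceptional clause. I expect the main obstacle to be the non-principal case: one must simultaneously arrange that $\psi$ divides $\varphi$ (forcing the valuation plateau of $\psi$ to sit beneath that of $\varphi$, handled by continuity and the rescaling $x \mapsto x/c$) and that the cofactor $\varphi/\psi$ stays a nonunit (forcing $\alpha'' < \gamma$, which is precisely where the absence of a least positive element of $\Gamma$, equivalently the non-principality of $\m$, is essential). Some care is also required in the degenerate subcase $\varphi(0) = 0$, where $\gamma = \infty$ and any positive $\alpha'$ suffices.
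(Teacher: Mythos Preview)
Your proposal is correct and follows essentially the same approach as the paper's proof: both reduce to $0\in E$ with $\varphi(0)\in\m$, invoke Lemma~\ref{Lem:StoneWeierstrass} in the principal case, and in the non-principal case build the divisor $\psi$ by rescaling a function from Lemma~\ref{Lem:ZigZag} so that its nontrivial region sits inside the continuity threshold of $\varphi$ at $0$. The only notable organizational difference is that the paper splits the non-principal case into two subcases (divisible $\Gamma$ with non-algebraically-closed residue field versus non-divisible $\Gamma$) and uses $\alpha' = v(\varphi(0))/2$, whereas you treat it uniformly by simply choosing some $\alpha'\in\Gamma$ with $0<\alpha'<\gamma$, exploiting directly that $\Gamma$ has no least positive element; your version is slightly cleaner and also explicitly covers the degenerate case $\varphi(0)=0$, which the paper's reduction to $\varphi(0)\in\m\setminus\{0\}$ glosses over. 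One small caution: your phrase ``any $\varepsilon>0$'' relies on the guarantee $\alpha''>\alpha=0$, which is established in the proof of Lemma~\ref{Lem:ZigZag} but not literally in its statement; taking $\varepsilon<\alpha'$ makes this explicit from the stated bound $\alpha'-\varepsilon<\alpha''$.
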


\begin{proof}
	Let $\m$ denote the maximal ideal of $V$ and let $\Gamma$ denote the value group.
	
	First we do not make any assumptions about the valuation domain $V$ and the subset $E$. Let $\varphi \in \IntR(E,V)$ be a nonzero, nonunit element. Then there exists some $a \in E$ such that $\varphi(a) \in \m \setminus \{0\}$. By setting $\psi(x) := \varphi(x+a) \in \IntR(E-a,V)$, we see that $\psi(0) \in \m \setminus \{0\}$, so we can assume without loss of generality that $\varphi(0) \in \m \setminus \{0\}$ and $0 \in E$. 
	
	Since $\varphi(0) \in \m\setminus \{0\}$, there exists $\delta \in \Gamma$ with $\delta \geq 0$ such that for all $d \in V$ with $v(d) \geq \delta$, we have that $v(\varphi(d)) = v(\varphi(0)) > 0$, due to Proposition \ref{Prop:RationalFunctionsContinuousInValuationTopology}. 
	
	Let $r \in V$ such that $v(r) = \delta$.  Our goal is to construct a nonunit $\psi \in \IntR(E, V)$ such that $\frac{\varphi}{\psi} \in \IntR(E,V)$ and $\frac{\varphi}{\psi}$ is not a unit. This will show that $\IntR(E,V)$ is antimatter. 
	
	We split the proof into three cases. Case 1 is when $V/\m$ is not algebraically closed and $\Gamma$ is divisible. Case 2 is when $\Gamma$ is not divisible and $\m$ is not principal. Case 3 is when $\m$ is principal and $E$ has no isolated points. 
	\begin{enumerate}
		\item We assume that $V/\m$ is not algebraically closed and $\Gamma$ is divisible. We take $\psi(x) = \varphi_{\alpha,\alpha', \varepsilon}\left(\frac{x}{r}\right)$ from Lemma \ref{Lem:ZigZag} using $\alpha = 0$, $\alpha' =  \frac{v(\varphi(0))}{2}$, and any value of $\varepsilon \in \Gamma$ such that $\varepsilon > 0$.	

		We see that $\psi \in \IntR(E,V)$. Since $\psi(0) \in \m$, the rational function $\psi$ is not a unit of $\IntR(E,V)$. Furthermore, if we let $a \in E$, then $v(\psi(a)) = 0$ if $v(a) < \delta$ and $v(\psi(a)) \leq \frac{v(\varphi(0))}{2} < v(\varphi(0)) = v(\varphi(a))$ if $v(a) \geq \delta$. Therefore, $v\left(\frac{\varphi}{\psi}(a)\right) \geq 0$ in both cases, so $\frac{\varphi}{\psi} \in \IntR(V)$. Also, $\frac{\varphi}{\psi}$ is not a unit because 
		\[v\left(\frac{\varphi}{\psi}(0)\right) = v(\varphi(0)) - \frac{v(\varphi(0))}{2}=\frac{v(\varphi(0))}{2} >  0. \]
		
		\item Suppose that $\Gamma$ is not divisible and $\m$ is not principal. 

		Take $\psi(x) = \varphi_{\alpha, \alpha', \varepsilon}\left(\frac{x}{r}\right)$ as in Lemma \ref{Lem:ZigZag} using $\alpha = 0$, $\alpha' =  \frac{v(\varphi(0))}{2}$, and $\varepsilon \in \Gamma$ such that $0 <\varepsilon < \frac{v(\varphi(0))}{2}$.

		From this, we see that $\psi \in \IntR(E,V)$. Since $\psi(0) \in \m$, we also know that $\psi$ is not a unit of $\IntR(E,V)$. Moreover, we have $v\left(\frac{\varphi}{\psi}(a)\right) \geq 0$ for each $a \in E$, since $v(\psi(a)) = 0$ if $v(a) < \delta$ and $v(\psi(a)) < \frac{v(\varphi(0))}{2} < v(\varphi(0)) = v(\varphi(a))$ if $v(a) \geq \delta$. Thus, $\frac{\varphi}{\psi} \in \IntR(E,V)$. Furthermore, $v\left(\frac{\varphi}{\psi}(0)\right) > v(\varphi(0)) - v(\varphi(0)) > 0$, so $\frac{\varphi}{\psi}$ is not a unit of $\IntR(E,V)$.

		\item Now we assume that $\m$ is principal and $E$ has no isolated points with respect to the topology induced by the valuation. We know that $\varphi$ is strictly divisible by another nonunit element of $\IntR(E,V)$ by Lemma \ref{Lem:StoneWeierstrass}.
		
	\end{enumerate}
\end{proof}

If $V$ is a valuation domain with principal maximal ideal and $E$ is some subset of the field of fractions with an isolated point, then the factorization in $\IntR(E,V)$ is more interesting. 

\begin{lemma}\label{Lem:AtomsInIntREV}
	Let $V$ be a valuation domain with field of fractions $K$ and associated valuation $v$. Suppose that the maximal ideal $\m$ of $V$ is principal, generated by some $t \in V$. Take $E$ to be some subset of $K$. Then
	\[
	\mathcal{A}(\IntR(E,V)) = \{[\psi_s] \mid s \in S \},
	\] 
	where $S$ is the set of isolated points of $E$ with respect to the topology induced by the valuation and for each $s \in S$, the rational function $\psi_s$ is given by
	\[
	\psi_s(x) = \frac{(x-s)^3+c_s^3t^2}{(x-s)^3+c_s^3t},
	\]
	with $c_s \in K$ such that there does not exist any $b \in E$ with $v(b-s) > v(c_s - s)$. 
\end{lemma}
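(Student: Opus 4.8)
The plan is to establish the two inclusions $\{[\psi_s] \mid s \in S\} \subseteq \mathcal{A}(\IntR(E,V))$ and $\mathcal{A}(\IntR(E,V)) \subseteq \{[\psi_s] \mid s \in S\}$ separately, using as the central structural fact that $\m = (t)$ forces $v(t)$ to be the least strictly positive element of $\Gamma$. From this I get at once that $\frac{v(t)}{3}, \frac{2v(t)}{3} \notin \Gamma$ and that $\Gamma$ contains no element strictly between $v(c_s) + \frac{v(t)}{3}$ and $v(c_s) + \frac{2v(t)}{3}$; these are exactly the observations that make the construction of $\psi_s$ work, just as in Lemma \ref{Lem:StoneWeierstrass}.

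For the first inclusion I would fix $s \in S$ and compute $v(\psi_s(d))$ for $d \in E$ by applying the ultrametric estimate to the numerator and denominator of $\psi_s$ separately. The minimality of $v(t)$ collapses this to the two-valued formula $v(\psi_s(d)) \in \{0, v(t)\}$, with the value $0$ occurring precisely when $v(d-s) < v(c_s) + \frac{v(t)}{3}$ and the value $v(t)$ occurring precisely when $v(d-s) > v(c_s) + \frac{2v(t)}{3}$; the defining condition on $c_s$ guarantees that the second case can occur only at $d = s$. In particular $\psi_s \in \IntR(E,V)$, and $\psi_s$ is a nonunit since $v(\psi_s(s)) = v(t) > 0$. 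To see that $\psi_s$ is an atom, suppose $\psi_s = \varphi_1 \varphi_2$ with $\varphi_1, \varphi_2 \in \IntR(E,V)$. At each $d \neq s$ the relation $v(\varphi_1(d)) + v(\varphi_2(d)) = 0$ with both summands nonnegative forces both to vanish, while at $s$ the relation $v(\varphi_1(s)) + v(\varphi_2(s)) = v(t)$ cannot be split into two strictly positive parts, again by minimality of $v(t)$. Hence one factor has valuation $0$ throughout $E$ and is therefore a unit.

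For the reverse inclusion, let $\varphi$ be an atom. Since $\varphi$ is a nonunit there is some $a \in E$ with $\varphi(a) \in \m$, and Lemma \ref{Lem:StoneWeierstrass}, in contrapositive, forces this $a$ to be an isolated point $s \in S$, for otherwise $\varphi$ would be strictly divisible by a nonunit. The key step is then that $\frac{\varphi}{\psi_s} \in \IntR(E,V)$: at every $d \neq s$ we have $v(\psi_s(d)) = 0 \leq v(\varphi(d))$, and at $s$ we have $v(\varphi(s)) \geq v(t) = v(\psi_s(s))$ because $v(\varphi(s))$ is a strictly positive, hence $\geq v(t)$, element of $\Gamma$. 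Thus $\varphi = \psi_s \cdot \frac{\varphi}{\psi_s}$ is a factorization in $\IntR(E,V)$ whose first factor is a nonunit; since $\varphi$ is an atom, the quotient $\frac{\varphi}{\psi_s}$ must be a unit, giving $\varphi \sim \psi_s$.

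Two pieces of bookkeeping remain. First, to see that the listed classes are genuinely distinct, note that for $s \neq s'$ in $S$ we have $v(\psi_s(s')) = 0$ but $v(\psi_{s'}(s')) = v(t)$, so $\psi_s \not\sim \psi_{s'}$; the same evaluation argument shows $[\psi_s]$ is independent of the permissible choice of $c_s$. Second, when $S = \emptyset$ the statement asserts that $\IntR(E,V)$ has no atoms, which is exactly what Lemma \ref{Lem:StoneWeierstrass} yields, since then every nonunit value occurs at a non-isolated point and so every nonunit is strictly divisible. I expect the main obstacle to be the valuation computation establishing the two-valued behaviour of $\psi_s$ — in particular verifying that no intermediate valuation arises and that the transition range $v(c_s) + \frac{v(t)}{3} \leq v(d-s) \leq v(c_s) + \frac{2v(t)}{3}$ is empty on $\Gamma$ — since everything downstream, both the irreducibility of $\psi_s$ and the divisibility $\psi_s \mid \varphi$, rests on that dichotomy together with the indecomposability of $v(t)$ in $\Gamma_{\geq 0}$.
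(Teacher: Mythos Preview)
Your proposal is correct and follows essentially the same approach as the paper: compute the two-valued behaviour of $\psi_s$ from the minimality of $v(t)$, deduce irreducibility from the indecomposability of $v(t)$ in $\Gamma_{\geq 0}$, and for the converse use Lemma~\ref{Lem:StoneWeierstrass} to force any nonunit value of an atom to occur at an isolated point, then divide by the corresponding $\psi_s$. Your write-up is in fact more thorough than the paper's, which omits the explicit divisibility verification $\psi_s \mid \varphi$, the distinctness of the classes $[\psi_s]$, and the $S = \varnothing$ case; one small imprecision is that Lemma~\ref{Lem:StoneWeierstrass} in contrapositive shows \emph{every} $a$ with $\varphi(a)\in\m$ is isolated (not just the chosen one), but this only strengthens what you need.
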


\begin{proof}
	Let $s \in S$, then for each $d \in E$, we calculate that
	\[
	v(\psi_s(d)) = \begin{cases}
		0,&\text{if $d \neq s$} \\
		v(t),&\text{if $d = s$}.
	\end{cases}
	\]
	We can deduce that $\psi_s$ is an atom. If $\psi_s = \rho_1\rho_2$ for some $\rho_1, \rho_2 \in \IntR(E,V)$, then $v(t) = v(\psi_s(s)) = v(\rho_1(s)) + v(\rho_2(s))$ implies that $\rho_1(s)$ or $\rho_2(s)$ is a unit of $V$. This implies that $\rho_1$ or $\rho_2$ is a unit of $\IntR(E,V)$, since $\rho_1$ and $\rho_2$ are unit-valued on $E \setminus \{s\}$.

	Now suppose that $\varphi \in \IntR(E,V)$ is an atom. Since $\varphi$ is not a unit, there exists $a \in E$ such that $\varphi(a) \in \m$. If $a \notin S$, then Lemma \ref{Lem:StoneWeierstrass} implies that $\varphi$ is not an atom. Thus, there exists $s \in S$ such that $\varphi(s) \in \m$. Now note that $\psi_s$ divides $\varphi$. Since $\varphi$ is an atom, we have that $\varphi \sim \psi_s$. 
\end{proof}

Therefore, in the case of a valuation domain $V$ with principal ideal, it is possible to obtain a ring of the form $\IntR(E,V)$ that has atoms. In fact, it is possible to obtain a ring of the form $\IntR(E,V)$ that is atomic. 

\begin{proposition}\label{Prop:DiscreteAtomic}
	Let $V$ be a valuation domain with principal maximal ideal. Also let $E$ be a subset of the field of fractions of $V$, and $\Gamma$ be the value group of $V$. Then $\IntR(E,V)$ is atomic if and only if $E$ is finite and $\Gamma \cong \Z$. Moreover, in this case, we have \[\IntR(E,V)^\bullet/\IntR(E,V)^\times \cong \N^{\abs{E}},\]
	where $\IntR(E,V)^\bullet$ is the multiplicative monoid of nonzero elements, making $\IntR(E,V)$ a unique factorization domain. 
\end{proposition}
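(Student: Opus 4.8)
The plan is to read the factorization structure directly off the complete list of atoms in Lemma~\ref{Lem:AtomsInIntREV}, transported through the evaluation valuations $\varphi \mapsto v(\varphi(s))$ for $s \in E$. Throughout write $\m = (t)$, so that principality of $\m$ forces $v(t) = \min \Gamma_{>0}$.

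For the ``if'' direction, assume $E$ is finite and $\Gamma \cong \Z$. First I would observe that every point of $E$ is isolated: for $s \in E$, any $\gamma \in \Gamma$ with $\gamma \ge v(b-s)$ for the finitely many $b \in E \setminus \{s\}$ exhibits a punctured neighborhood of $s$ disjoint from $E$. Hence $S = E$, and Lemma~\ref{Lem:AtomsInIntREV} gives $\mathcal{A}(\IntR(E,V)) = \{[\psi_s] : s \in E\}$, a set of cardinality $\abs{E}$. I would then introduce
\[
\theta\colon \IntR(E,V)^\bullet \longrightarrow (\N \cup \{\infty\})^{\abs{E}}, \qquad \theta(\varphi) = \left( \tfrac{v(\varphi(s))}{v(t)} \right)_{s \in E},
\]
which makes sense because $\Gamma \cong \Z$ is generated by its least positive element $v(t)$, so each finite $v(\varphi(s)) \in \Gamma_{\ge 0}$ lies in $v(t)\N$; it is additive since $v$ is. Its kernel is exactly $\IntR(E,V)^\times$ (the functions unit-valued at every point of $E$), so $\theta$ descends to an injective homomorphism $\bar\theta$ on $\IntR(E,V)^\bullet/\IntR(E,V)^\times$, and surjectivity onto $\N^{\abs{E}}$ is immediate from the value computation of Lemma~\ref{Lem:AtomsInIntREV}, since $\theta(\psi_s)$ is the $s$-th standard basis vector and hence $\theta\big(\prod_s \psi_s^{n_s}\big) = (n_s)_s$. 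Granting that the image lands in $\N^{\abs{E}}$ (the crux, below), $\bar\theta$ is an isomorphism onto the free commutative monoid $\N^{\abs{E}}$, which is factorial; this yields at once that $\IntR(E,V)$ is atomic and a unique factorization domain with the stated monoid.

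For the ``only if'' direction I would prove the contrapositive. If $E$ is infinite, the constant $t$ is a nonzero nonunit, yet any finite product of atoms $\prod_i \psi_{s_i}^{e_i}$ is unit-valued off the finite set $\{s_i\}$ by Lemma~\ref{Lem:AtomsInIntREV}, whereas $v(t) > 0$ at every point of $E$; so $t$ admits no atomic factorization. If instead $E$ is finite but $\Gamma \not\cong \Z$, then every point of $E$ is isolated, and since $v(t) = \min \Gamma_{>0}$ there is some $\gamma \in \Gamma_{>0}$ with $\gamma \notin v(t)\N$. Fixing an isolated $s \in E$, I would take $a \in V$ with $v(a) = \gamma$ and $r \in K$ with $v(r) > v(b-s)$ for all $b \in E \setminus \{s\}$, and set $\varphi(x) = \frac{(x-s)+ar}{(x-s)+r}$, so that $v(\varphi(s)) = \gamma$ and $v(\varphi(b)) = 0$ for $b \neq s$. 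Since every product of atoms takes values in $v(t)\N$ at each point, this nonzero nonunit $\varphi$ is not a product of atoms.

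The hard part is the well-definedness and injectivity of $\theta$ in the ``if'' direction, i.e.\ showing that a nonzero element of $\IntR(E,V)$ is pinned down up to units by its finitely many values $v(\varphi(s))$ and that these values are all finite. This is precisely the step ruling out infinitely-divisible behavior: the natural device is $N(\varphi) \coloneqq \sum_{s \in E} v(\varphi(s))/v(t)$, which is additive, equals $1$ on each atom $\psi_s$, and drops by one upon dividing out an atom, so its finiteness forces any factorization to terminate after $N(\varphi)$ steps. Verifying that $N$ is finite on every nonzero nonunit — equivalently, that the evaluation valuations at the points of $E$ genuinely control the ring — is the crux on which both atomicity and the clean identification with $\N^{\abs{E}}$ rest, and it is exactly where the behavior of integer-valued rational functions at their zeros in $E$ must be confronted.
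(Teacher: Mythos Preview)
Your approach mirrors the paper's almost exactly: the same evaluation homomorphism $\varphi \mapsto (v(\varphi(s)))_{s\in E}$, the same appeal to Lemma~\ref{Lem:AtomsInIntREV} for the list of atoms and for surjectivity, and contrapositive arguments for the ``only if'' direction (the paper uses a constant $c\in V$ with $v(c)\notin v(t)\N$ in place of your constructed $\varphi$, but to the same effect). The one substantive difference is that you explicitly isolate as ``the crux'' the finiteness of each $v(\varphi(s))$, whereas the paper simply asserts that the map is ``a well-defined injective homomorphism''.

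Your instinct here is correct, and in fact this step cannot be completed: the ``if'' direction of the proposition is false as stated. For any nonempty finite $E$ and any $s\in E$ there is a nonzero $\varphi\in\IntR(E,V)$ with $\varphi(s)=0$; e.g.\ for $E=\{s\}$ take $\varphi(x)=x-s$, and for larger $E$ take $\varphi(x)=\dfrac{x-s}{(x-s)+r}$ with $v(r)>v(b-s)$ for all $b\in E\setminus\{s\}$. Then $v(\varphi(s))=\infty$, so your $\theta(\varphi)\notin\N^{|E|}$; more to the point, $(\varphi/\psi_s^n)(s)=0\in V$ for every $n$, so $\varphi$ is infinitely divisible by the atom $\psi_s$ and admits no atomic factorization. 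Equivalently, by Lemma~\ref{Lem:AtomsInIntREV} every finite product of atoms (up to a unit) is nonzero at each point of $E$, so no such product can equal a function vanishing somewhere on $E$. Thus $\IntR(E,V)$ fails to be atomic for every nonempty $E$, and both your argument and the paper's break down at precisely the point you flagged.
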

\begin{proof}
	Let $\m$ denote the maximal ideal of $V$, generated by some $t \in V$.
	
	Denote by $\psi_s$ the atom in $\IntR(E,V)$ such that $v(\psi_s(d)) = 0$ for $d \in E \setminus \{s\}$ and $v(\psi_s(s)) = v(t)$. Every atom of $\IntR(E,V)$ is associate to $\psi_s$ for some $s \in E$ by Lemma \ref{Lem:AtomsInIntREV}. 
	
	Suppose that $\IntR(E,V)$ is atomic. If there exists $a \in E$ that is not an isolated point with respect to the topology induced by the valuation, then Lemma \ref{Lem:StoneWeierstrass} implies that every $\varphi \in \IntR(E,V)$ with $\varphi(a) \in \m$ is strictly divisible by a nonunit element of $\IntR(E,V)$. In particular, $t \in \IntR(E,V)$ cannot be written as the product of finitely many atoms. If on the contrary $t = \varphi_1 \cdots \varphi_n$, where $\varphi_1, \dots, \varphi_n$ are atoms of $\IntR(E,V)$, then there exists an $i$ such that $\varphi_i(a) \in \m$ for some $a \in E$ that is not an isolated point. However, we have just shown that $\varphi_i$ would not be an atom, a contradiction. Therefore, $\IntR(E,V)$ being atomic implies that $E$ consists of only isolated points. A finite product of atoms in $\IntR(E,V)$ is associates with $\psi_{s_1}\cdots \psi_{s_n}$ for some $s_1, \dots, s_n \in E$ due to Lemma \ref{Lem:AtomsInIntREV}. The rational function $\psi_{s_1}\cdots \psi_{s_n}$ is valued in $\m$ only on the finite set $\{s_1, \dots, s_n\}$. Therefore, the constant function $t \in \IntR(E,V)$ can only be written as a product of finitely many atoms if $E$ is finite. If $\Gamma \not\cong \Z$, since $\m$ is principal, there exists $c \in V$ such that there does not exist $n \in \N$ with $nv(t) = v(c)$. Therefore, if $\Gamma \not\cong \Z$, the constant function $c \in \IntR(E,V)$ cannot be written as the product of finitely many atoms of $\IntR(E,V)$. This means $\IntR(E,V)$ being atomic implies that $E$ is finite and $\Gamma \cong \Z$.
	
	Now we suppose that $E$ is finite and $\Gamma \cong \Z$. If we are able to show that $\IntR(E,V)^\bullet/\IntR(E,V)^\times \cong \N^{\abs{E}}$ as monoids, then we will show that $\IntR(E,V)$ is atomic since $\N^{\abs{E}}$ is atomic. Write $E = \{s_1, \dots, s_n\}$ for distinct elements $s_1, \dots, s_n$. The isomorphism is given by
	\[
	\varphi \mapsto (v(\varphi(s_1)), \dots, v(\varphi(s_n))),
	\]
	for any nonzero $\varphi \in \IntR(E,V)$, where $v$ is the associated valuation of $V$ with $v(t) = 1$. We see this map is indeed a well-defined injective homomorphism. The elements $\psi_{s_1}, \dots, \psi_{s_n}$, given by Lemma \ref{Lem:AtomsInIntREV}, map to a generating set of $\N^{\abs{E}}$, showing that the homomorphism is surjective. 
\end{proof}

We summarize the atomicity of the ring $\IntR(E,V)$ for a valuation domain $V$ and a subset $E$ of the field of fractions of $V$. The ring $\IntR(E,V)$ is only atomic where $E$ is finite and $V$ is a discrete valuation domain (the value group is isomorphic to $\Z$). When $\IntR(E,V)$ is atomic, it is also a unique factorization domain. If we only require $\IntR(E,V)$ to have atoms, then $V$ needs to have a principal maximal ideal and $E$ needs to have isolated points with respect to the topology induced by the valuation. Outside of this case, the ring $\IntR(E,V)$ is an antimatter domain. 

\section{Atomicity}\label{Sect:Atomicity}
\indent\indent Even for a valuation domain $V$, rings of integer-valued rational functions of the form $\IntR(E,V)$ is antimatter most of the time. Proposition \ref{Prop:DiscreteAtomic} gives an instance of a ring of the form $\IntR(E,V)$ that is atomic, but $\IntR(E,V)$ is a unique factorization domain in this case. In this section, we introduce another family of atomic rings of integer-valued rational functions. 

One way to look for atomic domains of integer-valued rational functions is to find an bounded factorization domain $D$ such that $\IntR(E,D)$ is local, which we'll see shortly that this forces $D$ to be local, for some subset $E$ of the field of fractions of $D$. Then make use of the following lemma. 

\begin{lemma}\label{Lem:LocalImpliesAtomic}
	Let $D$ be a bounded factorization domain with field of fractions $K$. Suppose $E$ is a subset of $K$ such that $\IntR(E,D)$ is local. Then $\IntR(E,D)$ is atomic. 
\end{lemma}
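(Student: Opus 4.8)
The plan is to produce, for every nonzero nonunit $\varphi$ of $R \coloneqq \IntR(E,D)$, a uniform upper bound on the number $n$ of factors in any decomposition $\varphi = \varphi_1\cdots\varphi_n$ into nonunits of $R$; such a bound yields atomicity at once, since one may keep splitting any nonunit factor that is not an atom, strictly increasing the number of factors while never exceeding the bound, so the procedure must terminate with all factors irreducible. For the setup, I would first observe that evaluation $\varphi \mapsto \varphi(a)$ at any $a \in E$ is a surjective ring homomorphism $R \to D$ (surjective because $D \subseteq R$ as the constants); hence $D$, being a homomorphic image of the local ring $R$, is itself local, say with maximal ideal $\m$. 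Since $R$ is local and each $\M_{\m,a}$ is a maximal ideal of $R$, all of the ideals $\M_{\m,a}$ for $a \in E$ must coincide with the unique maximal ideal of $R$; in particular every nonunit $\varphi \in R$ satisfies $\varphi(a) \in \m$ for all $a \in E$.

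Next I would invoke that $D$ is a bounded factorization domain: for each nonzero nonunit $d \in D$ the quantity $L_D(d) = \sup \mathcal{L}(d)$ is finite, satisfies $L_D(d) \geq 1$, and is superadditive on products, since concatenating atomic factorizations shows $L_D(cd) \geq L_D(c) + L_D(d)$. Given a decomposition $\varphi = \varphi_1 \cdots \varphi_n$ into nonunits, I would choose a point $a_0 \in E$ with $\varphi(a_0) \neq 0$ and evaluate there: $\varphi(a_0) = \varphi_1(a_0)\cdots\varphi_n(a_0)$ in $D$, where each $\varphi_i(a_0)$ is nonzero (the product is nonzero) and lies in $\m$ by the previous paragraph, hence is a nonzero nonunit of $D$ with $L_D(\varphi_i(a_0)) \geq 1$. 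Superadditivity then gives
\[
n \leq \sum_{i=1}^n L_D(\varphi_i(a_0)) \leq L_D(\varphi(a_0)) < \infty,
\]
a bound depending only on $\varphi$ and not on the chosen decomposition, which is exactly what the reduction in the first paragraph needs.

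The delicate point, and what I expect to be the main obstacle, is the existence of an evaluation point $a_0 \in E$ at which $\varphi$ does not vanish. As a nonzero element of $K(x)$, the function $\varphi$ has only finitely many zeros in $K$, so such a point exists whenever $E$ is infinite, and in particular for $E = K$, which is the case relevant to the later applications of this lemma. This is precisely the step that transfers factorization data in $R$ to factorization data of the single element $\varphi(a_0)$ of the bounded factorization domain $D$; were $\varphi$ able to vanish on all of $E$, no such transfer would be available and the length bound would break down, so ensuring that $E$ meets the complement of the zero set of each relevant $\varphi$ is where the hypotheses genuinely do their work. The remaining ingredients, namely that a surjective image of a local ring is local, that the ideals $\M_{\m,a}$ collapse onto the maximal ideal of $R$, and that $L_D$ is superadditive, are routine.
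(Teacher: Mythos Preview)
Your argument is essentially the paper's: both pin down the maximal ideal of $\IntR(E,D)$ as $\IntR(E,\m)$ so that nonunits evaluate into $\m$ at every point of $E$, and then control the number of nonunit factors of $\varphi$ via $L_D(\varphi(a))$ at a suitable $a\in E$; the paper packages this as an induction on $\inf_{a\in E} L_D(\varphi(a))$ while you give a direct bound (which incidentally yields bounded factorization, not merely atomicity), but the content is the same. Your caution about the existence of $a_0$ with $\varphi(a_0)\neq 0$ is well placed and is a gap the paper's proof shares---for a singleton $E=\{a\}$ with $D$ a discrete valuation ring with uniformizer $\pi$, the element $x-a$ admits no atomic factorization in $\IntR(E,D)$, since any factor vanishing at $a$ is further divisible by $\pi$---so restricting to infinite $E$, which covers all of the paper's applications with $E=K$, is exactly right.
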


\begin{proof}
	We first show that if $\IntR(E,D)$ is local, then $D$ must also be local. Let $a, b \in D$ be nonunits of $D$. Then $a+b$ is not a unit in $\IntR(E,D)$ since $\IntR(E,D)$ is local, so $a+b$ is also not a unit of $D$. This shows that $\IntR(E,D)$ being local implies that $D$ is local. 
	
	Denote the maximal ideal of $D$ by $\m$. Let $a \in E$. We know that $\M_{ \m, a }$ is a maximal ideal of $\IntR(E,D)$ for each $a \in E$. Since $\IntR(E,D)$ has a unique maximal ideal, the maximal ideals $\{\M_{ \m, a }\}_{a \in E}$ all coincide, meaning the maximal ideal of $\IntR(E,D)$ is $\IntR(E,\m)$.
	
	This has the implication that for a $\varphi \in \IntR(E,D)$ such that there exists $a \in E$ where $\varphi(a) \in D^\times$, we have $\varphi(b) \in D^\times$ for all $b \in E$, meaning that $\varphi$ is a unit of $\IntR(E,D)$. 
	
	Now let $\varphi \in \IntR(E,D)$ be a nonzero, nonunit element. We want to factor $\varphi$ into a product of finitely many atoms. We induct on $\inf\{L_D(\varphi(a)) \mid a \in E\}$, which is finite since $D$ is a bounded factorization domain. 
	
	If $\inf\{L_D(\varphi(a)) \mid a \in E\} = 1$, then there exists $a \in E$ such that $\varphi(a)$ is an atom of $D$. If we write $\varphi = \psi_1\psi_2$ for some $\psi_1, \psi_2 \in \IntR(E,D)$, then $\varphi(a) = \psi_1(a)\psi_2(a)$ implies that $\psi_1(a)$ or $\psi_2(a)$ is a unit of $D$, which means that $\psi_1$ or $\psi_2$ is a unit of $\IntR(E,D)$. This shows that $\varphi$ is an atom. Since $\varphi$ is an atom, we know that in particular, $\varphi$ is a product of finitely many atoms.
	
	Now suppose that $n \coloneqq \inf\{L_D(\varphi(a)) \mid a \in E\} > 1$. Then there exists $a \in E$ such that $L_D(\varphi(a)) = n$. If $\varphi$ is an atom, we are done. If $\varphi$ is not an atom, we can write $\varphi = \psi_1\psi_2$ for some $\psi_1, \psi_2 \in \IntR(E,\m)$. Then $\varphi(a) = \psi_1(a)\psi_2(a)$. Since neither $\psi_1(a)$ nor $\psi_2(a)$ is a unit of $D$, we must have that $L_D(\psi_1(a)), L_D(\psi_2(a)) < n$. This implies that both $\inf\{L_D(\psi_1(a)) \mid a \in E\}$ and $\inf\{L_D(\psi_2(a)) \mid a \in E\}$ are strictly less than $n$. By induction, we can write $\psi_1$ and $\psi_2$ as the product of finitely many atoms of $\IntR(E,D)$, so we can do the same for $\varphi$. 
\end{proof}

One way to force a ring of integer-valued rational functions to be local is to have a local domain $D$ such that there exists a valuation overring whose valuation separates the units of $D$ and the maximal ideal of $D$. To reach this result, we use the idea of minimum valuation functions. 

\begin{definition}\cite{Liu}
	Let $V$ be a valuation domain with value group $\Gamma$, valuation $v$, and field of fractions $K$. Take a nonzero polynomial $f \in K[x]$ and write it as $f(x) = a_nx^n + \cdots + a_1x+ a_0$ for $a_0, a_1, \dots, a_n \in K$. We define the \textbf{minimum valuation function of $f$} as $\minval_{f,v}: \Gamma \to \Gamma $ by \[\gamma \mapsto \min\{v(a_0), v(a_1)+\gamma, v(a_2)+2\gamma, \dots, v(a_n) + n\gamma\}\] for each $\gamma \in \Gamma$. We will denote $\minval_{f,v}$ as $\minval_f$ if the valuation $v$ is clear from context. It is oftentimes helpful to think of $\minval_f$ as a function from $\Q{\Gamma}$ to $\Q{\Gamma}$ defined as $\gamma \mapsto \min\{v(a_0), v(a_1)+\gamma, v(a_2)+2\gamma, \dots, v(a_n) + n\gamma\}$ for each $\gamma \in \Q{\Gamma}$. 
	
	For a nonzero rational function $\varphi \in K[x]$, we write $\varphi = \frac{f}{g}$ for some $f, g \in K[x]$. Then for each $\gamma \in \Gamma$, we define $\minval_\varphi(\gamma) = \minval_f(\gamma) - \minval_g(\gamma)$.
\end{definition}

The purpose of the minimum valuation function of a rational function is that it can predict the valuation of the outputs of the rational function most of the time. The minimum valuation function also has the nice property of being piecewise linear. The following lemma showcases these facts. 

\begin{lemma}\cite[Proposition 2.24 and Lemma 2.26]{Liu}\label{Lem:MinvalForm}
	Let $V$ be a valuation domain with value group $\Gamma$, valuation $v$, maximal ideal $\m$, and field of fractions $K$. For a nonzero $\varphi \in K(x)$, the function $\minval_\varphi$ has the following form evaluated at $\gamma \in \Q\Gamma$
	\[
	\minval_\varphi(\gamma) = \begin{cases}
		c_1 \gamma + \beta_1, & \gamma \leq \delta_1,\\
		c_2 \gamma + \beta_2, & \delta_1 \leq \gamma \leq \delta_2,\\
		\vdots\\
		c_{k-1} \gamma + \beta_{k-1}, & \delta_{k-2} \leq \gamma  \leq \delta_{k-1},\\
		c_k \gamma + \beta_k, & \delta_{k-1} \leq \gamma,
	\end{cases}
	\] 
	where $c_1, \dots, c_k \in \Z$; $\beta_1, \dots, \beta_k \in \Gamma$; and $\delta_1, \dots, \delta_{k-1} \in \Q{\Gamma}$ such that $\delta_1 < \cdots < \delta_{k-1}$. 
	
	Furthermore, for all but finitely many $\gamma \in \Gamma$, we have that $v(\varphi(t)) = \minval_\varphi(v(t))$ for all $t \in K$ such that $v(t) = \gamma$. 
\end{lemma}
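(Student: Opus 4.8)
The plan is to reduce everything to the polynomial case and then exploit the ultrametric inequality. First I would analyze $\minval_f$ for a nonzero polynomial $f(x) = a_n x^n + \cdots + a_0$. By definition $\minval_f(\gamma) = \min_{i : a_i \neq 0}\{v(a_i) + i\gamma\}$ is the pointwise minimum of finitely many affine functions $\ell_i(\gamma) = v(a_i) + i\gamma$, each with integer slope $i$ and intercept $v(a_i) \in \Gamma$. Hence $\minval_f$ is a concave, piecewise linear function whose slopes are integers, whose intercepts lie in $\Gamma$, and whose breakpoints occur where two of the $\ell_i$ cross, i.e. at points $\gamma = (v(a_j) - v(a_i))/(i-j) \in \Q\Gamma$; there are finitely many of them, and they can be listed in increasing order.

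Next, for $\varphi = f/g$ I would write $\minval_\varphi = \minval_f - \minval_g$ and observe that a difference of two such piecewise linear functions is again piecewise linear, with integer slopes (differences of integers), intercepts in $\Gamma$ (differences of elements of the group $\Gamma$), and a finite set of breakpoints contained in the union of the breakpoint sets of $\minval_f$ and $\minval_g$, all lying in $\Q\Gamma$. After deleting redundant breakpoints, by merging adjacent pieces that share the same slope and intercept, and listing the remaining ones as $\delta_1 < \cdots < \delta_{k-1}$, one obtains exactly the claimed normal form. I would also remark that $\minval_\varphi$ does not depend on the chosen representation $\varphi = f/g$, which follows from additivity of $\minval$ on products and is in any case subsumed by the valuation identity proved next.

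For the \emph{furthermore} statement I would invoke the strict ultrametric inequality: in a valued field, $v\!\left(\sum_i z_i\right) = \min_i v(z_i)$ whenever the minimum is attained by a unique index. Fix $t \in K$ with $v(t) = \gamma$; then $v(a_i t^i) = v(a_i) + i\gamma = \ell_i(\gamma)$, so $v(f(t)) \geq \minval_f(\gamma)$ always, with equality as soon as $\minval_f(\gamma)$ is attained by a single term. The set of $\gamma$ where two distinct terms tie for the minimum is precisely the finite set of breakpoints of the lower envelope $\minval_f$; away from these, the minimizing index is unique, so $v(f(t)) = \minval_f(\gamma)$ for every $t$ with $v(t) = \gamma$, and in particular $f(t) \neq 0$. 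Applying the same reasoning to $g$ and subtracting, for every $\gamma \in \Gamma$ outside the finitely many breakpoints of $\minval_f$ and of $\minval_g$ we get $g(t) \neq 0$ and
\[
v(\varphi(t)) = v(f(t)) - v(g(t)) = \minval_f(\gamma) - \minval_g(\gamma) = \minval_\varphi(\gamma)
\]
for all $t$ with $v(t) = \gamma$.

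The main obstacle is the \emph{all but finitely many} claim, specifically the exact identification of the bad $\gamma$. The delicate point is that two affine functions $\ell_i$ and $\ell_j$ may cross at a value of $\gamma$ where some third term is strictly smaller, so such a crossing does not create a tie at the minimum and must not be counted as an exception. One must therefore argue that genuine ties at the minimum occur only at the breakpoints of the lower envelope itself, a finite set, rather than at all pairwise intersection points: a tie at the minimum at $\gamma_0$ forces the minimizing slope to change there, hence $\gamma_0$ is a breakpoint, and conversely. A secondary bookkeeping issue is to check that after subtracting $\minval_g$ from $\minval_f$ the intercepts remain in $\Gamma$ rather than merely in $\Q\Gamma$, and that intersecting the excluded breakpoint set with $\Gamma$ still leaves only finitely many exceptional $\gamma$; both are immediate once the polynomial case is set up carefully.
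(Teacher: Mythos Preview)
Your argument is correct and complete: the Newton-polygon style analysis of $\minval_f$ as the lower envelope of the affine functions $\ell_i(\gamma)=v(a_i)+i\gamma$, together with the strict ultrametric inequality to identify the exceptional $\gamma$'s with the breakpoints, is exactly the right mechanism, and your handling of the subtle point (ties at the minimum force a slope change because distinct monomials have distinct slopes) is sound. There is nothing in the paper to compare against, since the lemma is not proved here but quoted from \cite{Liu}; your write-up is essentially the standard proof one would expect to find there.
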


Let $D$ be a local domain with field of fractions $K$. We now develop a technical lemma using the minimum valuation function to say that if all of the nonzero rational functions in $\IntR(K,D)$ have a minimum valuation function that is identically $0$ or always strictly positive, then $\IntR(K,D)$ is a local domain.

\begin{lemma}\label{Lem:MinvalDichotomyImpliesLocal}
	Let $D$ be a local domain with maximal ideal $\m$ and field of fractions $K$. Suppose that there is a valuation overring $V$ of $D$ with valuation $v$ such that for all $d \in \m$, we have $v(d) > 0$. Also let $\Gamma$ be the value group of $v$. Suppose that for all nonzero $\varphi \in \IntR(K,D)$ that $\minval_\varphi(\gamma) = 0$ for all $\gamma \in \Gamma$ or $\minval_\varphi(\gamma) > 0$ for all $\gamma \in \Gamma$. Then $\IntR(K,D)$ is a local domain with maximal ideal $\IntR(K,\m)$.
\end{lemma}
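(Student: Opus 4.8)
The plan is to prove that $\IntR(K,D)$ is local by showing that its set of nonunits is precisely $\IntR(K,\m)$, which is a proper ideal; once this is established, $\IntR(K,\m)$ is automatically the unique maximal ideal. First I would record the dictionary supplied by the overring $V$: since $D \subseteq V$, a unit $u$ of $D$ has $v(u) = 0$, while the hypothesis gives $v(d) > 0$ for every $d \in \m$. Because $D$ is local, every nonzero $d \in D$ is either a unit or lies in $\m$, so for $d \in D \setminus \{0\}$ we have $v(d) = 0$ exactly when $d \in D^\times$ and $v(d) > 0$ exactly when $d \in \m$. Thus for a nonzero $\varphi \in \IntR(K,D)$ it suffices to control $v(\varphi(c))$ at every point $c \in K$: if $v(\varphi(c)) = 0$ for all $c$ then $\varphi$ is a unit, while if $\varphi(c) \in \m$ for all $c$ then $\varphi \in \IntR(K,\m)$.

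The engine of the argument is the behavior of $\minval_\varphi$ at $\pm\infty$ together with translation. Writing $\varphi = f/g$ in lowest terms, the definition of $\minval$ gives, for all sufficiently large $\gamma$, that the constant terms realize the minimum, so $\minval_\varphi(\gamma) = v(\varphi(0))$ when $\varphi(0) \neq 0$, while $\minval_\varphi(\gamma) \to +\infty$ when $\varphi(0) = 0$. Dually, for all sufficiently negative $\gamma$ the leading coefficients realize the minimum, so $\minval_\varphi(\gamma) = (v(a_m) - v(b_n)) + (\deg f - \deg g)\gamma$, where $a_m, b_n$ are the leading coefficients of $f, g$. The crucial observation is that the substitution $x \mapsto x + c$ changes neither the degrees nor the leading coefficients of $f$ and $g$, so all translates $\varphi(\cdot + c)$ share the same $\minval$ on a neighborhood of $-\infty$, whereas the value at $+\infty$ reads off $v(\varphi(c))$.

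With these facts in place I would apply the dichotomy hypothesis not just to $\varphi$ but to every translate $\varphi(\cdot + c)$, each of which again lies in $\IntR(K,D)$ and is nonzero. Since the restriction of $\minval_{\varphi(\cdot + c)}$ near $-\infty$ is independent of $c$, and this common germ equals $0$ in the identically-zero branch but is strictly positive in the everywhere-positive branch, all translates must fall into the \emph{same} branch of the dichotomy. If every translate is identically zero, then evaluating at $+\infty$ forces $v(\varphi(c)) = 0$ (and in particular $\varphi(c) \neq 0$, else the large-$\gamma$ value would be $+\infty$) for every $c$, so $\varphi$ is unit-valued everywhere and hence a unit. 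If every translate is everywhere positive, then $+\infty$ gives $v(\varphi(c)) > 0$, or $\varphi(c) = 0$, for every $c$, so $\varphi(c) \in \m$ for all $c$ and $\varphi \in \IntR(K,\m)$.

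This shows that each nonzero element of $\IntR(K,D)$ is either a unit or a member of $\IntR(K,\m)$; since $0 \in \IntR(K,\m)$ and $\IntR(K,\m)$ is a proper ideal (it contains no unit, as the constant $1 \notin \m$), the nonunits are exactly $\IntR(K,\m)$, and $\IntR(K,D)$ is local with maximal ideal $\IntR(K,\m)$. The main obstacle I anticipate is the bridge in the third paragraph: arguing cleanly that membership in a branch of the dichotomy is a translation-invariant property, i.e. that the germ of $\minval_\varphi$ at $-\infty$ both detects which branch holds and is unchanged by $x \mapsto x + c$. Everything else is bookkeeping with the definition of $\minval$ and the valuation dictionary; the genuinely load-bearing step is upgrading the pointwise dichotomy hypothesis to the global statement ``unit-valued everywhere or $\m$-valued everywhere'' by sweeping the evaluation point across $K$ through translation.
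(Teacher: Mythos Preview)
Your proof is correct, and the overall strategy—substituting to move the evaluation point, then applying the dichotomy hypothesis to the substituted function—matches the paper's. The execution differs in an interesting way, however. The paper uses the multiplicative substitution $\psi(x) = \varphi(a(1+x))$ and appeals to the second half of Lemma~\ref{Lem:MinvalForm} (that $v(\varphi(t)) = \minval_\varphi(v(t))$ for all but finitely many values of $v(t)$) to link $\minval_\psi$ at a negative argument back to $\minval_\varphi$, obtaining a contradiction when $\minval_\varphi \equiv 0$ but $v(\varphi(a)) > 0$. You instead use additive translation $\varphi(x+c)$ and the elementary observation that the leading coefficients and degrees of numerator and denominator are unchanged, so the germ of $\minval_{\varphi(\cdot+c)}$ at $-\infty$ is independent of $c$; since this germ already distinguishes the two branches of the dichotomy, all translates lie in the same branch, and reading off the germ at $+\infty$ gives $v(\varphi(c))$. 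Your route is slightly more self-contained, since it uses only the definition of $\minval$ rather than the generic-equality statement from Lemma~\ref{Lem:MinvalForm}; the paper's route, on the other hand, makes the connection between $\minval_\psi$ and $\minval_\varphi$ via the valuation more explicit. One small point worth tightening in a write-up: when $\deg f < \deg g$ the germ at $-\infty$ is a nonconstant linear function (so not literally ``strictly positive'' as a function), but it is indeed positive on a neighborhood of $-\infty$, which is all you need.
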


\begin{proof}
	Let $\varphi \in \IntR(K,D)$. Suppose that $\minval_\varphi(\gamma) = 0$ for all $\gamma \in \Gamma$. We want to show that $\varphi$ is a unit of $\IntR(K,D)$. Assume for a contradiction that there exists $a \in K$ such that $v(\varphi(a)) > 0$. We know that $a \neq 0$ because otherwise, we have $v(\varphi(d)) > 0$ for all $d \in K$ with $v(d)$ sufficiently large by Proposition \ref{Prop:RationalFunctionsContinuousInValuationTopology} and this contradicts the assumption that $\minval_\varphi(\gamma) = 0$ for all $\gamma \in \Gamma$ by Lemma \ref{Lem:MinvalForm}. 
	
	Now assume that there exists $a \in K$ with $a \neq 0$ such that $v(\varphi(a)) > 0$. Then form 
	\[
	\psi(x) :=\varphi(a(1+x)) \in \IntR(K,D).
	\]
	Since $v(\psi(a)) = v(\varphi(a)) > 0$, by Proposition \ref{Prop:RationalFunctionsContinuousInValuationTopology}, there exists $\delta \in \Gamma$ such that for all $d \in K$ with $v(d) \geq \delta$, we have $v(\psi(d)) > 0$. By Lemma \ref{Lem:MinvalForm}, we can make $\delta$ large enough so that $\minval_\psi(\gamma) > 0$ for all $\gamma \in \Gamma$ with $\gamma \geq \delta$. However, there also exists $\alpha \in \Gamma$ with $\alpha < 0$ such that $\minval_\psi(\alpha) = v(\psi(b))$ for all $b \in K$ with $v(b) = \alpha$ and $\minval_\varphi(v(a)+\alpha) = v(\varphi(c))$ for all $c \in K$ with $v(c) = v(a) + \alpha$ by Lemma \ref{Lem:MinvalForm}. Let $b \in K$ such that $v(b) = \alpha$. Then
	\[
	\minval_\psi(\alpha) = v(\psi(b)) = v(\varphi(a(1+b))) = \minval_\varphi(v(a)+\alpha) = 0. 
	\]
	This gives us $\minval_\psi(\delta) > 0$ but $\minval_{\psi}(\alpha) = 0$, which is disallowed by assumption. Thus, for all $a \in K$, we have $v(\varphi(a)) = 0$ and $\varphi(a) \in D$ so $\varphi(a) \in D^\times$. 
	
	Now if we assume that $\varphi \in \IntR(K,D)$ is such that $\minval_\varphi(\gamma) > 0$ for all $\gamma \in \Gamma$, then we can similarly conclude that for all $a \in K$, we have $v(\varphi(a)) > 0$, so $\varphi(a) \in \m$. 
	
	Therefore, $\IntR(K,D) = \IntR(K,D)^\times \sqcup \IntR(K,\m)$, meaning that $\IntR(K,D)$ is a local domain with maximal ideal $\IntR(K,\m)$. 
\end{proof}

Next, we can construct local domains $D$ that satisfy the previous lemma by requiring the valuation on $D$ to give every element of the maximal ideal of $D$ a strictly positive value.

\begin{lemma}\label{Lem:GapImpliesLocal}
	Let $D$ be a local domain with maximal ideal $\m$ and field of fractions $K$. Suppose there exists a valuation overring $V$ of $D$ such that maximal ideal of $V$ is not principal. Let $\Gamma$ be the value group of $V$. Suppose that there exists $\gamma \in \Gamma$ with $\gamma > 0$ such that $v(d) > \gamma$ for each $d \in \m$. Then $\IntR(K,D)$ is a local domain with maximal ideal $\IntR(K,\m)$. 
\end{lemma}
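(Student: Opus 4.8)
The plan is to obtain this as a direct consequence of Lemma \ref{Lem:MinvalDichotomyImpliesLocal}. That lemma has two hypotheses: a valuation overring $V$ with $v(d)>0$ for every $d\in\m$, and the dichotomy that each nonzero $\varphi\in\IntR(K,D)$ satisfies either $\minval_\varphi(\gamma')=0$ for all $\gamma'\in\Gamma$ or $\minval_\varphi(\gamma')>0$ for all $\gamma'\in\Gamma$. The first hypothesis is free here, since $v(d)>\gamma>0$ for all $d\in\m$ by assumption. So the entire task reduces to verifying the dichotomy, and everything below is aimed at that single point.

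The first ingredient I would set up is a \emph{gap} in the values taken by $D$. Since $D\subseteq V$, every nonzero $d\in D$ has $v(d)\ge 0$; if $d\in D^\times$ then $d^{-1}\in D\subseteq V$ forces $v(d)=0$, while if $d\in\m$ the hypothesis gives $v(d)>\gamma$. Thus $v(D^\bullet)\cap(0,\gamma]=\emptyset$. Because every $\varphi\in\IntR(K,D)$ sends $K$ into $D$, we get $v(\varphi(t))\in\{0\}\cup(\gamma,\infty)$ for all $t\in K$, and by Lemma \ref{Lem:MinvalForm} this yields $\minval_\varphi(\gamma')\in\{0\}\cup(\gamma,\infty)$ for all but finitely many $\gamma'\in\Gamma$. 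The second ingredient exploits that the maximal ideal of $V$ is non-principal, equivalently that $\Gamma$ has no least strictly positive element. From this I would prove a small \emph{halving lemma}: for every $\eta\in\Gamma$ with $\eta>0$ there is $g\in\Gamma$ with $0<2g<\eta$. Indeed, if not, then picking $g_1\in(0,\eta)$ by density and applying the assumption to $\eta-g_1$ gives $2g_1\le\eta$, hence $2g_1=\eta$ for \emph{every} such $g_1$, contradicting that $(0,\eta)$ has more than one element. Iterating shows that for any positive integer $c$ there is $g\in\Gamma$ with $0<cg<\eta$; this is the tool that produces genuine elements of $\Gamma$ inside arbitrarily small prescribed windows.

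With these in hand I would argue the dichotomy as follows. First, $\minval_\varphi\ge 0$ at every point of $\Gamma$: at the cofinitely many non-exceptional $\gamma'$ this is just $v(\varphi(t))\ge 0$, and at an exceptional $\gamma'\in\Gamma$ one perturbs by the small elements from the halving lemma to nearby non-exceptional points and invokes continuity of the piecewise-linear function $\minval_\varphi$. Next, continuity together with the gap makes the zero set rigid: near a $\Gamma$-point where $\minval_\varphi=0$, its values are below $\gamma$, hence (lying in $\{0\}\cup(\gamma,\infty)$) are exactly $0$. Assume for contradiction that $\minval_\varphi$ is neither identically $0$ nor everywhere positive on $\Gamma$; then it vanishes at one $\Gamma$-point and exceeds $\gamma$ at another. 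At the transition from the value $0$ to values exceeding $\gamma$, the function $\minval_\varphi$ is governed by a linear piece of nonzero integer slope $c$, so I would use the halving lemma to select $\gamma'\in\Gamma$ just past the transition with $0<\minval_\varphi(\gamma')<\gamma$. This is an actual element of $\Gamma$ whose $\minval_\varphi$-value lands in the forbidden window $(0,\gamma]$, contradicting the gap. The dichotomy then follows and Lemma \ref{Lem:MinvalDichotomyImpliesLocal} delivers the conclusion.

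The step I expect to be the main obstacle is exactly the last one. The breakpoints of $\minval_\varphi$ live in the divisible hull $\Q\Gamma$, so a priori the transition from the value $0$ to values above $\gamma$ could take place across a stretch of $\Q\Gamma$ that contains no element of $\Gamma$ at all, in which case no point of $\Gamma$ would witness an intermediate value and the contradiction would evaporate. Ruling this out is where the absence of a least positive element is genuinely needed: one must show it makes $\Gamma$ fine enough that the intermediate window is actually met at a point of $\Gamma$, combining the halving lemma with the constraint $\minval_\varphi\ge 0$ and the integer-slope (in fact difference-of-concave) structure of $\minval_\varphi=\minval_f-\minval_g$. Pinning down this interaction between the value group $\Gamma$ and its divisible hull is the delicate heart of the argument.
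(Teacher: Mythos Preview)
Your approach is exactly the paper's: verify the dichotomy hypothesis of Lemma~\ref{Lem:MinvalDichotomyImpliesLocal} using the piecewise-linear form of $\minval_\varphi$ from Lemma~\ref{Lem:MinvalForm}, the gap $(0,\gamma]$ in $v(D^\bullet)$, and the non-discreteness of $\Gamma$. The paper's proof is far terser than yours; it essentially asserts the dichotomy in a single sentence (``$\minval_\varphi$ is piecewise linear, maps $\Gamma$ into $\{0\}\sqcup v(\m)$, and $\Gamma$ is not discrete, so\ldots''), leaving the reader to supply the kind of Lipschitz/halving argument you spell out. Your treatment of the finitely many exceptional points and your halving lemma are genuine details the paper omits.

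The obstacle you flag at the end is real and is not handled in the paper's proof either. In fact the tools you list---the halving lemma, the constraint $\minval_\varphi\ge 0$, and the difference-of-concave structure---do \emph{not} suffice to rule it out for general value groups. For instance, with $\Gamma=\Z\oplus\Q$ ordered lexicographically and gap $\gamma_0=(1,0)$, the function that is $0$ on $(-\infty,(1/3,0)]$, has slope $6$ on $[(1/3,0),(2/3,0)]$, and is constantly $(2,0)$ on $[(2/3,0),\infty)$ is a difference of concave piecewise-linear functions with integer slopes and $\Gamma$-intercepts, takes only the values $0$ and $(2,0)$ on $\Gamma$, yet violates the dichotomy; the transition happens entirely in the interval $((1/3,0),(2/3,0))\subset\Q\Gamma$, which contains no element of $\Gamma$. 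What rescues the argument in every application the paper makes is that $\Gamma$ is a subgroup of $\R$, hence of rank one; then the absence of a least positive element forces $\Gamma$ to be dense in $\R$ and a fortiori in $\Q\Gamma$, so your ``intermediate window'' is always met by a $\Gamma$-point and the contradiction goes through. In the stated generality of the lemma your caution is warranted.
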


\begin{proof}
	Let $\varphi \in \IntR(K,D)$ be nonzero. By Lemma \ref{Lem:MinvalForm}, we have that $\minval_\varphi$ is a piecewise linear function that maps $\Gamma$ to $\{0\} \sqcup v(\m)$. Since the maximal ideal of $V$ is not principal, the value group $\Gamma$ is not discrete. Thus, we must have $\minval_\varphi = 0$ or $\minval_\varphi(\gamma) > 0$ for all $\gamma \in \Gamma$. Then $\IntR(K,D)$ is a local domain with maximal ideal $\IntR(K,\m)$ by Lemma \ref{Lem:MinvalDichotomyImpliesLocal}.
	
\end{proof}

We still need the base ring to be a bounded factorization domain in order to use Lemma \ref{Lem:LocalImpliesAtomic}. We get this if the image of the base ring under the separating valuation is a bounded factorization monoid. 

\begin{lemma}\label{Lem:BFMtoLocalAtomic}
	Let $D$ be a local domain with maximal ideal $\m$ and field of fractions $K$. Suppose there exists a valuation overring $V$ of $D$ such that maximal ideal of $V$ is not principal. Let $\Gamma$ be the value group of $V$ and $v$ be the associated valuation. Suppose that there exists $\gamma \in \Gamma$ with $\gamma > 0$ such that $v(d) > \gamma$ for each $d \in \m$ and $M \coloneqq \{0\} \cup v(\m)$ is a bounded factorization monoid. Then $\IntR(K,D)$ is a local, atomic domain. 
\end{lemma}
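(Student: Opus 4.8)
The plan is to assemble the two ingredients required by the lemmas already proved and then combine them. Locality is immediate: the hypotheses of this lemma are exactly those of Lemma \ref{Lem:GapImpliesLocal}, so that lemma applies verbatim and tells us that $\IntR(K,D)$ is a local domain with maximal ideal $\IntR(K,\m)$. Granting that $D$ is a bounded factorization domain, Lemma \ref{Lem:LocalImpliesAtomic} applied with $E = K$ then gives that $\IntR(K,D)$ is atomic. Since $\IntR(K,D)$ is automatically a domain (it sits inside $K(x)$), this finishes the argument. So the entire content not covered by a direct citation is the claim that $D$ itself is a bounded factorization domain, and this is where the hypothesis that $M \coloneqq \{0\} \cup v(\m)$ is a bounded factorization monoid is used.

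To show $D$ is a bounded factorization domain, I would first observe that $v$ restricts to a monoid homomorphism $v \colon D^\bullet \to M$ whose kernel is exactly $D^\times$. Indeed, because $V$ is an overring of $D$ we have $v(d) \geq 0$ for every $d \in D^\bullet$; a unit $u \in D^\times$ has $u^{-1} \in D$, which forces $v(u) = 0$; and a nonzero nonunit of $D$ lies in $\m$, so its value lies in $v(\m)$ and in fact exceeds $\gamma > 0$. Hence $v(D^\bullet) \subseteq M$, and $v(d) = 0$ if and only if $d \in D^\times$.

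The key estimate is that, for a nonzero nonunit $d \in D$, the number $n$ of factors in any factorization $d = d_1 \cdots d_n$ into nonunits of $D$ is bounded by $L_M(v(d))$, independently of the chosen factorization. Applying $v$ gives $v(d) = v(d_1) + \cdots + v(d_n)$ in $M$, and by the previous paragraph each $v(d_i)$ is a nonzero (hence nonunit) element of the reduced monoid $M$. Since $M$ is a bounded factorization monoid it is in particular atomic, so each $v(d_i)$ is a sum of at least one atom of $M$; concatenating these expresses $v(d)$ as a sum of at least $n$ atoms of $M$, so $n \leq L_M(v(d)) < \infty$. From this uniform bound both halves of the bounded-factorization property follow in the standard way: picking a factorization of $d$ into nonunits with the maximal possible number of factors (this maximum exists, being a bounded set of positive integers) yields a factorization into atoms, since no factor could be split further, so $D$ is atomic; and every atomic factorization of $d$ has length at most $L_M(v(d))$, so $\mathcal{L}_D(d)$ is finite. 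Thus $D$ is a bounded factorization domain, and the proof concludes by citing the two lemmas as above.

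The only genuinely delicate point is the key estimate, that is, transferring the bounded-factorization property from the value monoid $M$ back to $D$ along $v$; once the homomorphism $v \colon D^\bullet \to M$ with kernel $D^\times$ is identified, this is a routine pullback of factorization lengths through a monoid homomorphism into a bounded factorization monoid, and I expect it to present no real obstacle.
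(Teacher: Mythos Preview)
Your proposal is correct and follows essentially the same approach as the paper: both reduce to showing that $D$ is a bounded factorization domain via the bound $n \leq L_M(v(d))$ for any factorization of $d$ into $n$ nonunits, and then invoke Lemma~\ref{Lem:GapImpliesLocal} and Lemma~\ref{Lem:LocalImpliesAtomic}. The only cosmetic difference is that the paper deduces atomicity of $D$ by induction on $L_M(v(d))$, whereas you pick a factorization into nonunits of maximal length; these are interchangeable standard arguments.
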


\begin{proof}
	We want to use Lemma \ref{Lem:LocalImpliesAtomic}, so we will show that $D$ is a bounded factorization domain. 
	
	Let $d \in \m \setminus \{0\}$. We want to show that we can write $d$ as a product of finitely many atoms of $D$ by inducting on $L_M(v(d))$. We know that $L_M(v(d))$ is finite since $M$ is a bounded factorization monoid. 
	
	If $L_M(v(d)) = 1$, then $v(d)$ is an atom of $M$. Whenever we write $d = b_1b_2$ for some $b_1, b_2 \in D$, we have $v(d) = v(b_1) + v(b_2)$. Thus, $v(b_1) = 0$ or $v(b_2) = 0$, so $b_1$ or $b_2$ is a unit of $D$ by the assumption that there exists $\gamma \in \Gamma$ with $\gamma > 0$ such that $v(d) > \gamma$ for each $d \in \m$. Therefore, $d$ is an atom of $D$.
	
	Suppose that $n \coloneqq L_M(v(d))  > 1$. If $d$ is an atom of $D$, there is nothing to do. If $d$ is not an atom of $D$, then we can write $d = b_1b_2$ for some $b_1, b_2 \in \m$. Since $v(d) = v(b_1) + v(b_2)$, we must have that $L_M(v(b_1)), L_M(v(b_2)) < n$. By induction, we know that we can write $b_1$ and $b_2$ both as a product of finitely many atoms of $D$, so $d$ can be written as the product of finitely many atoms of $D$ as well. 
	
	Now write $d = a_1 \cdots a_m$ for some atoms $a_1, \dots, a_m \in D$. Then we see that $v(d) = v(a_1) + \cdots + v(a_m)$ with no $v(a_i) = 0$ implies that $m \leq L_M(v(d)) < \infty$. Thus, $D$ is a bounded factorization domain. 
	
	We now use Lemma \ref{Lem:GapImpliesLocal} to show that $\IntR(K,D)$ is local, so Lemma \ref{Lem:LocalImpliesAtomic} tells us that $\IntR(K,D)$ is atomic. 
\end{proof}

We will provide examples of domains $D$ that satisfy the conditions of the previous lemma by starting with the monoid $M$. We can form a ring out of this monoid called the monoid ring. A collection of results on these rings can be found in \cite{GilmerSemigroupRings}. We then localize these monoid rings to satisfy the local condition. 

\begin{definition}
	Let $R$ be a domain and $M$ be an commutative, cancellative, additive monoid. We form the \textbf{monoid ring} $R[t;M]$ whose elements are finite sums of the form $rt^{\alpha}$, where $r \in R$ and $\alpha \in M$. The multiplication in $R[t;M]$ is determined by $r_1t^{\alpha_1} \cdot r_2t^{\alpha_2} = r_1r_2t^{\alpha_1+\alpha_2}$ for $r_1, r_2 \in R$ and $\alpha_1, \alpha_2 \in M$. Denote by $(t;M)$ the ideal of $R[t;M]$ generated by $\{t^\alpha \mid \alpha \text{ is a nonzero nonunit of $M$}\}$.  
\end{definition}

The ring $\F_2[t;M]_{(t;M)}$, where $M = \{q \in \Q \mid q = 0 \text{ or } q \geq 1\}$ has been used to produce a SFT ring of finite Krull dimension such that its power series ring has infinite Krull dimension \cite{CoykendallSFT}. Here, we will use localized monoid domains to produce a family of local, atomic rings of integer-valued rational functions. If the underlying monoid is totally ordered, these localized monoid domains have a useful valuation on them to produce the valuation domain required for Lemma \ref{Lem:BFMtoLocalAtomic}. 

\begin{definition}
	Let $M$ be a monoid. The monoid $M$ is \textbf{totally ordered} if there is a total order $\leq$ on $M$ such that for all $\alpha, \alpha', \beta, \beta' \in M$, having $\alpha \leq \alpha'$ and $\beta \leq \beta'$ imply $\alpha + \beta \leq \alpha' + \beta'$. 
	
	Let $M$ be a totally ordered monoid. We say that $M$ is \textbf{positive} if $ \alpha \geq 0$ for all $\alpha \in M$. The \textbf{difference group} of $M$ is $\mathsf{gp}(M) \coloneqq \{\alpha - \alpha' \mid \alpha, \alpha' \in M \}$. Take $k$ to be a field and $M$ to be a totally ordered positive monoid, and set $D$ to be $k[t;M]_{(t;M)}$. Let $K$ be the field of fractions of $D$. The \textbf{$M$-valuation} $v$ on $D$ is given by
	\[
	v\left( a_{\alpha_{i_1}} t^{\alpha_{i_1}} + \cdots + a_{\alpha_{i_n}}t^{\alpha_{i_n}} \right) = \min\{\alpha_{i_1}, \dots, \alpha_{i_n} \},
	\]
	where $\alpha_{i_1}, \dots, \alpha_{i_n} \in M$ are distinct and $a_{\alpha_{i_1}}, \dots, a_{\alpha_{i_n}} \in k$ are nonzero elements. This valuation on $k[t;M]$ extends uniquely to a valuation on $D$. The valuation extends further uniquely to a valuation on $K$ with value group $\mathsf{gp}(M)$. 
	
	For a totally ordered, positive monoid $M$, we say that $M$ is \textbf{bounded away from $0$} if there exists $\gamma \in \mathsf{gp}(M)$ with $\gamma > 0$ such that $\alpha \geq \gamma$ for each nonzero $\alpha \in M$. We say that $M$ is \textbf{integrally terminal for $\gamma$} for some $\gamma \in \mathsf{gp}(M)$ if for all $\gamma' \in \mathsf{gp}(M)$ such that $\gamma' > \gamma$, we have $\gamma' \in M$ and for each $\gamma' \in \mathsf{gp}(M)$ with $\gamma' > 0$, there exists $n \in \N$ such that $n\gamma' > \gamma$. Note that the latter condition implies that $n\gamma' \in M$. We say that $M$ is \textbf{integrally terminal} if there exists some $\gamma \in \mathsf{gp}(M)$ such that $M$ is integrally terminal for $\gamma$.
\end{definition}

\begin{theorem}\label{Thm:Atomic}
	Let $M$ be a totally ordered, positive, bounded factorization monoid bounded away from $0$. Suppose that $\Gamma \coloneqq \mathsf{gp}(M)$ has no minimal strictly positive element. Let $k$ be a field and take $D$ to be $k[t;M]_{(t;M)}$. Also let $K$ be the field of fractions of $D$. Then $\IntR(K,D)$ is a local, atomic domain. 
\end{theorem}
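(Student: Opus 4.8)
The plan is to reduce the theorem to a single application of Lemma \ref{Lem:BFMtoLocalAtomic}, whose conclusion is exactly the desired statement: I would verify that $D = k[t;M]_{(t;M)}$, equipped with its $M$-valuation, satisfies every hypothesis of that lemma. So the whole argument becomes a verification, and essentially all of the content sits in correctly identifying $v(\m)$ and in matching up the two roles played by the symbol $M$ (the given monoid, and the monoid $\{0\}\cup v(\m)$ appearing in the lemma).

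First I would pin down the structure of $D$. Since $M$ embeds into the totally ordered abelian group $\Gamma = \mathsf{gp}(M)$, it is cancellative and torsion-free, so $k[t;M]$ is a domain. The ideal $(t;M)$ is the kernel of the augmentation $k[t;M]\to k$ sending $t^0\mapsto 1$ and $t^\alpha\mapsto 0$ for $\alpha\neq 0$; hence it consists precisely of the elements with vanishing constant coefficient, the quotient is $k$, and $(t;M)$ is a maximal (so prime) ideal. Thus $D$ is a local domain with maximal ideal $\m = (t;M)D$ and field of fractions $K$. I would then take $v$ to be the $M$-valuation extended to $K$, with value group $\Gamma$, and let $V$ be its valuation ring. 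Because every element of $k[t;M]$ has nonnegative valuation and every $s \notin (t;M)$ has $v(s)=0$, one gets $D\subseteq V$, so $V$ is a valuation overring of $D$.

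The key step is the identification $\{0\}\cup v(\m) = M$. Any nonzero $\varphi\in\m$ may be written $\varphi = a/s$ with $a\in(t;M)$ and $s\notin(t;M)$; then $v(s)=0$ and $v(a)$ is the minimum of the exponents occurring in $a$, each a nonzero element of $M$, so $v(\varphi)=v(a)\in M\setminus\{0\}$. Conversely, for each nonzero $\alpha\in M$ we have $t^\alpha\in\m$ with $v(t^\alpha)=\alpha$. Hence $v(\m\setminus\{0\}) = M\setminus\{0\}$ and $\{0\}\cup v(\m)=M$, which is a bounded factorization monoid by hypothesis. This reconciles the notation and supplies the last hypothesis of the lemma.

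It then remains to check the two conditions on the valuation, and this is where the genuinely subtle point lies. Since $\Gamma$ has no minimal strictly positive element, $V$ cannot have a principal maximal ideal, as a principal maximal ideal forces a least positive value. For the required bound, I would use that $M$ is bounded away from $0$: there is $\gamma_0\in\Gamma$ with $\gamma_0>0$ and $\alpha\geq\gamma_0$ for all nonzero $\alpha\in M$. The lemma, however, demands a \emph{strict} inequality $v(d)>\gamma$, whereas boundedness only gives a weak bound $\geq\gamma_0$; I expect this to be the main obstacle, and it is resolved by invoking the absence of a minimal positive element once more to choose $\gamma\in\Gamma$ with $0<\gamma<\gamma_0$, so that $v(d)\geq\gamma_0>\gamma$ for every nonzero $d\in\m$. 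With all hypotheses of Lemma \ref{Lem:BFMtoLocalAtomic} in place, it yields that $\IntR(K,D)$ is a local, atomic domain. Notice that the single structural assumption that $\Gamma$ has no minimal strictly positive element is what simultaneously produces both the strict bound and the non-principality of the maximal ideal of $V$.
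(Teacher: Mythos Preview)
Your proposal is correct and follows essentially the same route as the paper: both proofs verify the hypotheses of Lemma \ref{Lem:BFMtoLocalAtomic} for the $M$-valuation overring $V$, identify $\{0\}\cup v(\m)$ with $M$, and invoke the lemma. Your treatment is in fact more careful than the paper's on the one delicate point---the lemma demands a strict inequality $v(d)>\gamma$, and you explicitly use the absence of a minimal positive element in $\Gamma$ to pass from the weak bound $\geq\gamma_0$ to a strict one, whereas the paper's proof states ``$v(d)\in M$ so $v(d)>\gamma$'' without making this step explicit.
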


\begin{proof}
	Note that since $M$ is bounded away from $0$, there exists $\gamma \in \Gamma$ with $\gamma > 0$ such that $\alpha \geq \gamma$ for each $\alpha \in M\setminus \{0\}$.
	
	Let $v$ be the $M$-valuation on $K$. Call the associated valuation domain $V$. Since $\Gamma$ has no minimal strictly positive element, the maximal ideal of $V$ is not principal. Plus, $\{0\} \cup v((t;M)k[t;M]_{(t;M)}) = M$ is a bounded factorization monoid. Furthermore, we have that for each $d \in (t;M)k[t;M]_{(t;M)}$ that $v(d) \in M$ so $v(d) > \gamma$. By Lemma \ref{Lem:BFMtoLocalAtomic}, we have that $\IntR(K,D)$ is a local, atomic domain. 
\end{proof}

In some cases, the atomicity of these local, atomic domains of the form $\IntR(K,D)$, where $D = k[t;M]_{(t;M)}$, is fragile in the sense that the integral closure of $\IntR(K,D)$ is an antimatter domain, which we will see in Proposition \ref{Prop:IntegralClosureAntimatter}. In particular, this happens when we additionally require $M$ to be integrally terminal in addition to the conditions in Theorem \ref{Thm:Atomic}. The notion of $M$ being integrally terminal can be thought of as $M$ containing all sufficiently large elements of $\mathsf{gp}(M)$ and this sufficiently large portion of $\mathsf{gp}(M)$ is most of $\mathsf{gp}(M)$. Before we show that the integral closure of $\IntR(K,D)$ is antimatter in this case, we give a criterion for elements of $K$ to be in $D$ using the $M$-valuation. 

\begin{lemma}\label{Lem:BiggestFilter}
	Let $M$ be a totally ordered, positive monoid that is integrally terminal for some $\delta \in \Gamma \coloneqq \mathsf{gp}(M)$. Let $k$ be a field and take $D$ to be $k[t;M]_{(t;M)}$. Let $K$ be the field of fractions of $D$ and let $v$ be the $M$-valuation on $K$. If $c \in K$ is such that $v(c) > \delta$, or if $v(c) = \delta$ and $\delta \in M$, then $c \in D$. 
\end{lemma}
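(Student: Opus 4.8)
The plan is to produce a representation of $c$ as a fraction whose denominator has a nonzero constant term, i.e.\ lies outside the prime $(t;M)$; any such representation witnesses $c \in D = k[t;M]_{(t;M)}$.

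First I would pass to the group ring. Since $K = \Frac(k[t;M]) = \Frac(k[t;\Gamma])$, write $c = p/q$ with $p,q \in k[t;M]$ and set $\psi \coloneqq v(q) \in M$. Dividing numerator and denominator by $t^{\psi}$ gives $c = P/Q$ with $P,Q \in k[t;\Gamma]$, where $v(Q)=0$ (so $Q$ has a nonzero constant term) and $v(P)=v(c)\eqqcolon \gamma \geq \delta$. The goal then reduces to repairing the denominator: I want an auxiliary $\bar Q \in k[t;\Gamma]$ with $v(\bar Q)=0$ such that $Q\bar Q \in k[t;M]$ (so that $Q\bar Q$ still has a nonzero constant term), after which $c = (P\bar Q)/(Q\bar Q)$ is the sought representation, provided the numerator $P\bar Q$ also lands in $k[t;M]$.

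The heart of the argument is the construction of $\bar Q$. Working in the field of formal (Hahn) series $k((t^{\Gamma}))$ — into which $k[t;M]$, and hence $K$, embeds by sending $t^{\alpha}$ to the corresponding monomial, and in which the valuation-$0$ element $Q$ is invertible — I would form $Q^{-1}$ and let $\bar Q$ be the truncation consisting of those terms of $Q^{-1}$ whose exponent is $\leq \delta$. The two halves of the ``integrally terminal for $\delta$'' hypothesis do exactly the two things needed. A well-ordered induction using $QQ^{-1}=1$ shows that the support of $Q^{-1}$ lies in the monoid generated by the exponents $\eta_1,\dots,\eta_m$ of the non-constant part of $Q$; then the condition that for each $\gamma'>0$ some multiple $n\gamma'$ exceeds $\delta$ forces each $\eta_i$ to appear with bounded multiplicity among monoid elements $\leq\delta$, so only finitely many such elements are $\leq \delta$. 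Hence $\bar Q$ is a genuine (finite) element of $k[t;\Gamma]$ with $v(\bar Q)=0$. Writing $Q^{-1}=\bar Q + \tau$ with $\tau$ supported on exponents $>\delta$, we get $Q\bar Q = 1 - Q\tau$; since $Q$ is supported on $\Gamma_{\geq 0}$ and $\tau$ on $(\delta,\infty)$, the finite element $Q\tau$ is supported on $(\delta,\infty)$, so $Q\bar Q$ is supported on $\{0\}\cup(\delta,\infty)$. By the other half of the hypothesis (every element of $\Gamma$ exceeding $\delta$ lies in $M$), this gives $Q\bar Q \in k[t;M]$ with constant term $1$.

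Finally I would assemble the pieces. The numerator $P\bar Q$ lies in $k[t;\Gamma]$ and has valuation $\gamma + 0 = \gamma$, so all of its exponents are $\geq \gamma$. If $\gamma > \delta$ these are all $>\delta$, hence in $M$; in the boundary case $\gamma=\delta$ with $\delta\in M$, exponents equal to $\delta$ lie in $M$ by hypothesis while the rest are $>\delta$. Either way $P\bar Q \in k[t;M]$. Since $Q\bar Q \in k[t;M]$ has a nonzero constant term it avoids $(t;M)$, and $c=(P\bar Q)/(Q\bar Q)$ exhibits $c \in k[t;M]_{(t;M)} = D$. The main obstacle is the third step: making the truncation $\bar Q$ rigorous and, in particular, proving it is a finite sum — this is precisely where the two conditions of the integrally terminal hypothesis must be combined, the ``some multiple exceeds $\delta$'' condition to bound the low part of the support and the ``everything above $\delta$ lies in $M$'' condition to place the surviving high part inside $M$. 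A secondary point to check is that $\delta \geq 0$, so that the constant term of $Q^{-1}$ is retained in $\bar Q$ and $v(\bar Q)=0$.
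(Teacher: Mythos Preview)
Your proof is correct and shares the paper's overall strategy---multiply numerator and denominator by an auxiliary element so that the new denominator lies in $k[t;M]\setminus(t;M)$---but the construction of that multiplier is genuinely different. The paper stays entirely within polynomial algebra: writing the (normalized) denominator as $b_0+h$ with $v(h)=\varepsilon>0$, it picks an odd $n$ with $n\varepsilon>\delta$ and multiplies top and bottom by $f(h)$, where $f(x)=(x^n+b_0^n)/(x+b_0)\in k[x]$; the new denominator is then $h^n+b_0^n$, visibly supported on $\{0\}\cup\Gamma_{>\delta}$, and the new numerator retains valuation $v(c)$. Your route instead inverts $Q$ in the Hahn series field and truncates at $\delta$. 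This is conceptually cleaner---you are literally approximating $Q^{-1}$ to order $\delta$, and both halves of ``integrally terminal'' enter transparently (one for finiteness of $\bar Q$, the other to place the tail $Q\tau$ inside $M$)---but it costs you the Hahn-series apparatus and a separate combinatorial bound on the support below $\delta$. The paper's trick is more elementary, needing only the single-variable divisibility $x+b_0\mid x^n+b_0^n$ for odd $n$, and invokes the ``some multiple exceeds $\delta$'' clause just once, to choose $n$. Your flagged secondary point that $\delta\geq 0$ is easy to dispatch: if $\delta<0$, any $\gamma'\in\Gamma\cap(\delta,0)$ would have to lie in the positive monoid $M$, so either $\delta\geq 0$ or $(\delta,0)\cap\Gamma=\emptyset$, and in the latter case truncating at $0$ rather than $\delta$ changes nothing.
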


\begin{proof}
	Let $c \in K$ such that $v(c) > \delta$. We can write $c$ as
	\[
	c =  \frac{\sum\limits_{\alpha \in \Gamma_{\geq 0}} a_{\alpha} t^\alpha}{\sum\limits_{\beta \in \Gamma_{\geq 0}} b_\beta t^\beta},
	\]
	where each $a_\alpha, b_\beta \in k$ and $a_\alpha, b_\beta = 0$ for all but finitely many $\alpha, \beta \in \Gamma_{\geq 0}$. 	We write $c$ so that $b_0 \neq 0$ and $\min\{\alpha \in \Gamma_{\geq 0} \mid a_\alpha \neq 0 \} = v(c)$. If $\{\beta \in \Gamma_{\geq 0} \setminus \{0\} \mid b_\beta \neq 0 \}$ is empty, then $c \in k[t;M] \subseteq D$ and we are done. Otherwise, let $\varepsilon = \min\{\beta \in \Gamma_{\geq 0} \setminus \{0\} \mid b_\beta \neq 0 \}$.  Now there is some odd $n\in \N$ such that $n\varepsilon > \delta$. We have that $x+b_0$ divides $x^n+b_0^n$ in $k[x]$. Let $f(x) = \frac{x^n+b_0^n}{x+b_0} \in k[x]$. Then we have
	\[
	c = \frac{f\left(-b_0 + \sum\limits_{\beta \in \Gamma_{\geq 0}} b_\beta t^\beta \right)\sum\limits_{\alpha \in \Gamma_{\geq 0}} a_{\alpha} t^\alpha}{f\left(-b_0 + \sum\limits_{\beta \in \Gamma_{\geq 0}} b_\beta t^\beta \right)\sum\limits_{\beta \in \Gamma_{\geq 0}} b_\beta t^\beta} = \frac{f\left(-b_0 + \sum\limits_{\beta \in \Gamma_{\geq 0}} b_\beta t^\beta \right)\sum\limits_{\alpha \in \Gamma_{\geq 0}} a_{\alpha} t^\alpha}{\left(-b_0 + \sum\limits_{\beta \in \Gamma_{\geq 0}} b_\beta t^\beta \right)^n + b_0^n}
	\] 
	Each term in $\left(\sum\limits_{\beta \in \Gamma_{\geq 0} \setminus \{0\}} b_\beta t^\beta\right)^n$ has value at least $n\varepsilon > \delta$. Therefore, the entire denominator $\left(\sum\limits_{\beta \in \Gamma_{\geq 0} \setminus \{0\}} b_\beta t^\beta\right)^n + b_0^n$ is in $k[t;M] \setminus (t;M)$. As for the numerator, each term when expanded out has valuation at least $v(c) > \delta$, so the numerator is in $k[t;M]$. Thus, $c \in D$. 
	
	If $v(c) = \delta$ and $\delta \in M$. The proof is similar. 
	
\end{proof}

\begin{example}
	Let $M$ be a positive submonoid of $\R_{\geq 0}$ such that there exists $\delta \in \R_{\geq 0}$ so that $[\delta, \infty) \subseteq M$. Note that $M$ is integrally terminal. Also let $k$ be a field and $D = k[t;M]_{(t;M)}$. Denote by $K$ the field of fractions of $D$ and $v$ the $M$-valuation on $K$. Then $\frac{t^{13\delta} + t^{7\delta} + t^{3\delta}}{t^{\delta/2}+1} \in D$ since $v\left(\frac{t^{13\delta} + t^{7\delta} + t^{3\delta}}{t^{\delta/2}+1}\right) = 3\delta > \delta$.
\end{example}

Now we compute the integral closure of $\IntR(K,D)$ under certain assumptions and prove that this ring is an antimatter domain. 

\begin{proposition}\label{Prop:IntegralClosureAntimatter}
	Let $M$ be a totally ordered, integrally terminal, positive monoid bounded away from $0$. Suppose that $\Gamma \coloneqq \mathsf{gp}(M)$ has no minimal strictly positive element. Let $k$ be a field and take $D$ to be the ring $k[t;M]_{(t;M)}$ and let $K$ be the field of fractions of $D$. Denote by $V$ the valuation domain associated with the $M$-valuation $v$ on $K$. The integral closure of $\IntR(K,D)$ is the local, antimatter domain $\IntR(K,D)^\times + \M$, where
	\[
	\M = \{\varphi \in \IntR(K,V) \mid \exists \gamma \in \Gamma_{> 0} \text{ such that } \,\forall a \in K, v(\varphi(a)) \geq \gamma \}
	\]
	and is the maximal ideal of the integral closure of $\IntR(K,D)$. 
\end{proposition}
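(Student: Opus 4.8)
The plan is to establish all three assertions about $S \coloneqq \IntR(K,D)^\times + \M$ at once, writing $R \coloneqq \IntR(K,D)$ and $\m_R = \IntR(K,\m)$ for its maximal ideal (Theorem \ref{Thm:Atomic}, Lemma \ref{Lem:GapImpliesLocal}). I would first record the structural facts I will reuse: a unit of $R$ is exactly a $\varphi$ with $v(\varphi(a)) = 0$ for all $a$ (proof of Lemma \ref{Lem:MinvalDichotomyImpliesLocal}), and $\m_R \subseteq \M$ since boundedness away from $0$ forces $v(\varphi(a))$ uniformly positive when $\varphi(a) \in \m$. For the local structure I would check that $\M$ is an ideal and $S$ a ring (products and sums of an $\M$-element with anything of nonnegative value stay in $\M$, and $u_1+u_2$ for $u_i \in R^\times$ is either a unit or lands in $\m_R \subseteq \M$), and that $S \setminus \M$ is the unit group: for $u+\mu$ one inverts via $(u+\mu)^{-1} = u^{-1}(1+u^{-1}\mu)^{-1}$ and $(1+\nu)^{-1} = 1 - \tfrac{\nu}{1+\nu} \in 1+\M \subseteq S$ for $\nu \in \M$, using $v\big(\tfrac{\nu}{1+\nu}(a)\big) = v(\nu(a)) > 0$. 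Thus $S$ is local with maximal ideal $\M$. The inclusion $S \subseteq \overline R$ runs on a power trick: for $\mu \in \M$ with $v(\mu(a)) \ge \gamma > 0$ everywhere, integral terminality gives $n$ with $n\gamma > \delta$, so $v(\mu^n(a)) > \delta$ and Lemma \ref{Lem:BiggestFilter} yields $\mu^n \in R$; hence $\mu$ is a root of $T^n - \mu^n \in R[T]$ and is integral, and since integral elements form a ring containing $R^\times$, all of $S$ is integral over $R$.

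The crux is the reverse inclusion $\overline R \subseteq S$. Let $\varphi \in K(x)$ be integral over $R$. Evaluating a monic integral equation at each $a$ shows $\varphi(a)$ is integral over $D$, and a degree comparison in that equation rules out poles on $K$; as $V$ is integrally closed (in fact $\overline D = V$), this gives $\varphi \in \IntR(K,V)$. The heart is the dichotomy that either $v(\varphi(a)) = 0$ for all $a$ or $\varphi \in \M$, which I would obtain through the Gauss valuations $w_\gamma$ on $K(x)$ determined by $w_\gamma|_K = v$ and $w_\gamma(x) = \gamma$, which satisfy $w_\gamma(\psi) = \minval_\psi(\gamma)$. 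By the $R$-dichotomy (Lemma \ref{Lem:MinvalDichotomyImpliesLocal}) each $r \in R$ has $\minval_r(\gamma) \ge 0$, so $R$ lies in the valuation ring of $w_\gamma$ and integral closedness forces $\minval_\varphi(\gamma) \ge 0$ for all $\gamma$ (Lemma \ref{Lem:MinvalForm}). Now fix a $\gamma^* \in \Gamma$ with $\minval_\varphi(\gamma^*) = 0$: in the residue field $k(y)$ of $w_{\gamma^*}$ the image of $R$ is the field $F = R/\m_R$, whose nonzero classes come from $R^\times$ and hence, since $\minval_u \equiv 0$, reduce to rational functions with neither zero nor pole at $y = 0$ or $y = \infty$. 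Writing the minimal polynomial of the nonzero residue $[\varphi]$ over $F$ and comparing orders at $y = 0$ and $y = \infty$ (where its constant term is a unit) forces $[\varphi]$ to be a unit at both points; as the one-sided slopes of $\minval_\varphi$ at $\gamma^*$ equal precisely those two orders, both slopes vanish. Since $\minval_\varphi$ is nonnegative with finitely many linear pieces, a zero forces zero slope on both sides (at interior points because $\minval_\varphi \ge 0$, at breakpoints by the residue computation), and this propagates across all pieces; therefore $\minval_\varphi \equiv 0$ or $\minval_\varphi > 0$ throughout.

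It remains to pass from $\minval_\varphi$ to the genuine values and to exhibit the unit part. If $\minval_\varphi \equiv 0$ I claim $v(\varphi(a)) = 0$ for every $a$: otherwise some $v(\varphi(a_1)) > 0$, and the translate $\psi(x) = \varphi(x+a_1)$ — still integral over $R$, as translation is a ring automorphism preserving $R$ — has $\minval_\psi(\gamma) \to v(\psi(0)) > 0$ as $\gamma \to +\infty$, forcing $\minval_\psi > 0$ everywhere by the dichotomy, which contradicts $\minval_\psi(\gamma) = \minval_\varphi(\gamma) = 0$ on the unchanged leftmost piece. A symmetric translation-and-scaling argument bounds the values uniformly away from $0$ when $\minval_\varphi > 0$, placing $\varphi$ in $\M$. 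Finally, in the first case $\varphi$ is everywhere unit-valued; lifting a residue $\lambda \in k^\times$ of $\varphi(a_1)$ and applying the dichotomy to the integral element $\varphi - \lambda$ (which cannot be unit-valued) gives $\varphi - \lambda \in \M$, so $\varphi \in k^\times + \M \subseteq S$. Thus $\overline R = S$, local with maximal ideal $\M$.

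For antimatter I would show every nonzero $\mu \in \M$ is strictly divisible in $S$: choosing $0 < \gamma' < \gamma$ (possible since $\Gamma$ has no minimal positive element) and invoking Lemma \ref{Lem:ZigZag} to build $\psi \in \M$ with $0 < v(\psi(a)) \le \gamma' < \gamma \le v(\mu(a))$ for all $a$, both $\psi$ and $\mu/\psi$ lie in $\M$, so $\mu = \psi \cdot (\mu/\psi)$ is a nontrivial factorization; hence $\M$ has no atoms and $S$ is antimatter. The main obstacle is this reverse inclusion: its conceptual core is the Gauss-valuation/minimal-polynomial computation forcing the slope dichotomy for $\minval_\varphi$, while the most delicate bookkeeping is upgrading that to the dichotomy for the actual values $v(\varphi(a))$ — controlling the finitely many exceptional valuation-spheres through the translation and scaling substitutions, and treating breakpoints of $\minval_\varphi$ lying in $\Q\Gamma \setminus \Gamma$ (where the residue field is not $k(y)$) by a complementary Newton-polygon or base-change estimate.
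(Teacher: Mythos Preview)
Your overall scaffolding (power trick for $S \subseteq \overline R$, pointwise evaluation to land in $\IntR(K,V)$, subtracting a constant to reduce to the nonunit case, local with maximal ideal $\M$) matches the paper. The substantive divergence is in the reverse inclusion and in the antimatter step.

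For the reverse inclusion, the paper does \emph{not} pass through Gauss valuations, residue fields, or a slope dichotomy for $\minval_\varphi$. Instead it argues directly with the integral equation at a single point. Assuming $\varphi \in \IntR(K,V)\setminus \IntR(K,V)^\times$ has no uniform lower bound, write $\varphi^n+\psi_{n-1}\varphi^{n-1}+\cdots+\psi_0=0$ with $\psi_n=1$, let $i$ be minimal with $\psi_i\in R^\times$ (so $\psi_1,\dots,\psi_{i-1}\in \m_R$), and pick $a\in K$ with $0<v(\varphi(a))<\gamma_1/i$, where $\gamma_1$ witnesses that $M$ is bounded away from $0$. A term-by-term valuation count shows the minimum among $v(\psi_j(a)\varphi(a)^j)$ for $j\ge 1$ is uniquely $i\,v(\varphi(a))$, so $v(\psi_0(a))=i\,v(\varphi(a))\in(0,\gamma_1)$, contradicting $\psi_0(a)\in D$. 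This is much shorter than your route and sidesteps exactly the obstacles you flag: there is no need to analyze residue fields of $w_{\gamma^*}$, to relate slopes of $\minval_\varphi$ to orders at $y=0,\infty$, or to treat breakpoints in $\Q\Gamma\setminus\Gamma$. Your Gauss-valuation approach is conceptually appealing and probably can be made to work, but as you yourself note, the residue/Newton-polygon bookkeeping is the hard part, and the paper's argument shows that none of it is needed.

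For antimatter, there is a genuine (if easily repaired) gap in your plan: Lemma~\ref{Lem:ZigZag} carries the hypothesis that the residue field of $V$ is not algebraically closed or $\Gamma$ is not divisible, neither of which is assumed here. The paper avoids this entirely by using constants: given $\mu\in\M$ with $v(\mu(a))\ge\gamma>0$, pick $\gamma'\in\Gamma$ with $0<\gamma'<\gamma$ and factor $\mu = t^{\gamma'}\cdot(\mu/t^{\gamma'})$; both factors lie in $\M$. No auxiliary rational function is required.
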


\begin{proof}
	Since $M$ is bounded away from $0$, there exists some $\gamma_1 \in \Gamma_{> 0}$ such that $\alpha \geq \gamma_1$ for all nonzero $\alpha \in M$. 
	
	Since $M$ is integrally terminal, there exists some $\gamma_2 \in \Gamma$ such that for all $\gamma \in \Gamma$ such that $\gamma > \gamma_2$, we have that $\gamma \in M$ and that for each $\gamma \in \Gamma$ such that $\gamma > 0$, there exists $n \in \N$ such that $n\gamma > \gamma_2$.  
	
	Let $\IntR(K,D)'$ be the integral closure of $\IntR(K,D)$. First, we notice that $\IntR(K,D)^\times \subseteq \IntR(K,D) \subseteq \IntR(K,D)'$. If we take $\varphi \in \M$, then there exists $\gamma \in \Gamma$ with $\gamma > 0$ such that for all $a \in K$, we have $v(\varphi(a)) \geq \gamma$. Then there exists $n \in \N$ with $n > 0$ such that $n \gamma > \gamma_2$. Thus, $v(\varphi^n(a)) \geq n\gamma > \gamma_2$ for all $a \in K$. Using this, we can see that $\varphi^n \in \IntR(K,D)$ so $\varphi \in \IntR(K,D)'$. Thus, $\IntR(K,D)'$ contains $\IntR(K,D)^\times + \M$. 
	
	Now let $\varphi \in \IntR(K,D)'$. Then $\varphi^n + \psi_{n-1}\varphi^{n-1} + \cdots + \psi_1 \varphi + \psi_0 = 0$ for some $\psi_0, \dots, \psi_{n-1} \in \IntR(K,D)$. Then for every $a \in K$, we see that $\varphi(a)$ satisfies some monic polynomial with coefficients in $D$, so $\varphi(a)$ is in the integral closure of $D$, which is contained in $V$. Thus, $\varphi \in \IntR(K,V)$. We consider the case of when $\varphi$ is not a unit of $\IntR(K,V)$ and the case of when $\varphi$ is a unit of $\IntR(K,V)$.
	
	First, consider the case where $\varphi \notin \IntR(K,V)^\times$. Suppose that there does not exist $\gamma \in \Gamma$ with $\gamma > 0$ such that $v(\varphi(a)) \geq \gamma$ for all $a \in K$. We look at the monic polynomial over $\IntR(K,D)$ that $\varphi$ satisfies: $\psi_{n}\varphi^n + \psi_{n-1}\varphi^{n-1} + \cdots + \psi_1 \varphi + \psi_0 = 0$ for some $\psi_0, \dots, \psi_{n-1} \in \IntR(K,D)$ and $\psi_{n} = 1$. There exists some $i \in \{ 1, \dots, n\}$ such that $\psi_{1}, \dots, \psi_{i-1}$ are all in the maximal ideal of $\IntR(K,D)$ and $\psi_i \in \IntR(K,D)^\times$. We claim that there exists $a \in K$ such that $0 < v(\varphi(a)) < \frac{\gamma_1}{i}$. By Lemma \ref{Lem:MinvalForm}, if $\minval_\varphi$ attains a value of $0$ and a value that is at least $\frac{\gamma_1}{i}$, then there exists an $a \in K$ such that $0 < v(\varphi(a)) < \frac{\gamma_1}{i}$. Now we consider either $\minval_\varphi = 0$ or $\minval_\varphi(\gamma) \geq \frac{\gamma_1}{i}$ for all $\gamma \in \Gamma$. Using the proof of Lemma \ref{Lem:MinvalDichotomyImpliesLocal}, we see that $\minval_\varphi = 0$ implies that $\varphi \in \IntR(K,V)^\times$ and that $\minval_\varphi(\gamma) \geq \frac{\gamma_1}{i}$ for all $\gamma \in \Gamma$ implies $v(\varphi(d)) \geq \frac{\gamma_1}{i}$ for all $d \in K$. Both cases do not fall within our assumptions, so we must have an $a \in K$ such that $0 < v(\varphi(a)) < \frac{\gamma_1}{i}$. We get that
	\[
	v(\psi_0(a)) = v(\varphi(a)^n + \psi_{n-1}(a)\varphi(a)^{n-1} + \cdots + \psi_1(a)\varphi(a) ) = iv(\varphi(a)) < \gamma_1,
	\]
	since we have $v(\psi_j(a)\varphi(a)^j) = v(\psi_j(a)) + jv(\varphi(a)) \geq \gamma_1 > iv(\varphi(a))$ for $j$ such that $1 \leq j < i$ and we have $v(\psi_j(a)\varphi(a)^j) \geq jv(\varphi(a)) > iv(\varphi(a))$ for $j$ such that $i < j \leq n$. On the other hand,
	\begin{align*}
		v(\psi_0(a)) & = v\left(\sum_{j=1}^{n}\psi_j(a)\varphi(a)^j \right)
		\\
		& = v(\varphi(a)) + v\left(\sum_{j=1}^{n}\psi_j(a)\varphi(a)^{j-1}\right) \\
		& \geq v(\varphi(a)) > 0		
	\end{align*}
	This implies that $0 < v(\psi_0(a)) < \gamma_1$, which is impossible since $\psi_0(a) \in D$. This means that there must exist a $\gamma \in \Gamma$ with $\gamma > 0$ such that $v(\varphi(a)) \geq \gamma$ for all $a \in K$. Therefore, $\varphi \in \M$. 
	
	If $\varphi \in \IntR(K,V)^\times$, we have that $v(\varphi(0) - c) > 0$ for some $c \in k^\times$. Then set $\Phi(x):= \varphi(x) - c$. Now, $\Phi(x) \notin \IntR(K,V)^\times$ since $v(\Phi(0)) > 0$. We know $\Phi \in \IntR(K,D)'$, so we have $\Phi \in \M$ by the previous case. Therefore, we conclude that $\varphi \in c + \M \subseteq \IntR(K,D)^\times + \M$.

	Now that $\IntR(K,D)' = \IntR(K,D)^\times + \M$, we see that the set of nonunits of $\IntR(K,D)'$ is $\M$ and $\M$ is closed under addition. Thus, $\IntR(K,D)'$ is local with maximal ideal $\M$. 
	
	To see that $\IntR(K,D)'$ is antimatter, we take a nonzero $\varphi \in \M$. There is some $\gamma \in \Gamma$ with $\gamma > 0$ such that $v(\varphi(a)) \geq \gamma$ for all $a \in K$. There exists some $\gamma' \in \Gamma$ such that $0 < \gamma' < \gamma$ since $\Gamma$ has no minimal strictly positive element. Then $\varphi = t^{\gamma'}\frac{\varphi}{t^{\gamma'}}$ with $t^{\gamma'}, \frac{\varphi}{t^{\gamma'}} \in \M$. Thus, any nonzero, nonunit element of $\IntR(K,D)'$ is strictly divisible by a nonunit element. 
\end{proof}

\section{Factorization lengths}\label{Sect:Lengths}

\indent\indent For the atomic family of integer-valued rational functions over localizations of monoid domains in the previous section, the factorization of constant rational functions is different than that of the same constant considered as an element of the base ring. One way this difference in factorization behavior is captured is by comparing the set of factorization lengths. We first prove that atoms in the base ring remain atoms in the ring of integer-valued rational functions. 

\begin{lemma}\label{Lem:AtomTransfer}
	Let $D$ be an atomic domain with field of fractions $K$. Let $E$ be a nonempty subset of $K$ such that $\IntR(E,D)$ is a local domain. If $d \in D$ is an atom of $D$, then $d$ is an atom of $\IntR(E,D)$. 
\end{lemma}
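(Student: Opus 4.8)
The plan is to reduce any putative factorization of the constant $d$ to an evaluation at a single point of $E$, and then invoke the feature of locality already extracted in the proof of Lemma~\ref{Lem:LocalImpliesAtomic}: that a function taking a unit value somewhere on $E$ must be a global unit.

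First I would record the standing consequences of $\IntR(E,D)$ being local. As shown at the start of the proof of Lemma~\ref{Lem:LocalImpliesAtomic}, locality of $\IntR(E,D)$ forces $D$ itself to be local; write $\m$ for its maximal ideal. The maximal ideal of $\IntR(E,D)$ is then $\IntR(E,\m)$, and since each $\M_{\m,a}$ is a maximal ideal (as noted in the introduction), all of them coincide with the unique maximal ideal $\IntR(E,\m)$. Next I would confirm that the atom $d$ is genuinely a nonzero nonunit of $\IntR(E,D)$: it is clearly nonzero, and a constant $d$ is a unit of $\IntR(E,D)$ precisely when $d^{-1} \in D$, i.e.\ when $d \in D^\times$, which fails since $d$ is an atom of $D$. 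Hence $d$ lies in $\IntR(E,\m)$ and is a nonzero nonunit of $\IntR(E,D)$.

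The crux is the factorization step. Here I want to stress that, unlike the situation $A \subseteq K[x]$ discussed in the introduction, the ring $\IntR(E,D)$ is \emph{not} contained in $K[x]$, so I cannot argue that the factors of the constant $d$ must themselves be constant; a constant may split off honestly non-constant rational functions. Instead, suppose $d = \varphi\psi$ with $\varphi, \psi \in \IntR(E,D)$, fix any $a \in E$ (which exists because $E$ is nonempty), and evaluate to get $d = \varphi(a)\psi(a)$ in $D$. Because $d$ is an atom of $D$, one factor, say $\varphi(a)$, is a unit of $D$. Then $\varphi \notin \M_{\m,a}$; but $\M_{\m,a}$ is the unique maximal ideal of $\IntR(E,D)$, so $\varphi$ lies outside the maximal ideal and is therefore a unit of $\IntR(E,D)$. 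This shows $d$ cannot be written as a product of two nonunits, so $d$ is an atom of $\IntR(E,D)$.

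There is essentially no serious obstacle once the correct viewpoint is adopted: the whole argument rests on the coincidence of the maximal ideals $\M_{\m,a}$ with $\IntR(E,\m)$, which is exactly the consequence of locality harvested in Lemma~\ref{Lem:LocalImpliesAtomic}. The one point deserving care — and which I would flag explicitly — is resisting the temptation to claim the factors are constant; the role of evaluation at a point together with locality is precisely to control the genuinely rational (non-polynomial) factors that could otherwise arise.
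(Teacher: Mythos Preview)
Your proof is correct and follows essentially the same approach as the paper's: evaluate a putative factorization $d=\varphi\psi$ at a point $a\in E$, use that $d$ is an atom of $D$ to conclude one factor takes a unit value at $a$, and then invoke locality (via the coincidence of $\M_{\m,a}$ with the unique maximal ideal $\IntR(E,\m)$) to upgrade this to a global unit. Your write-up is a bit more explicit in checking that $d$ is a nonzero nonunit and in warning against the false shortcut of assuming constant factors, but the underlying argument is identical.
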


\begin{proof}
	We have seen in the proof of Lemma $\ref{Lem:LocalImpliesAtomic}$ that $\IntR(E,D)$ being local implies that $D$ is local. Furthermore, $\IntR(E,\m)$ is the unique maximal ideal of $\IntR(E,D)$, where $\m$ is the unique maximal ideal of $D$. 
	
	Let $d \in D$ be an atom of $D$. Suppose $d = \varphi_1\varphi_2$ for some $\varphi_1, \varphi_2 \in \IntR(E,D)$. Then $d = \varphi_1(a)\varphi_2(a)$ for all $a \in E$. If we fix an $a \in E$, we see that $\varphi_1(a)$ is a unit or $\varphi_2(a)$ is a unit of $D$. Since the maximal ideal of $\IntR(E,D)$ is $\IntR(E,\m)$, we know that $\varphi_1$ or $\varphi_2$ is a unit of $\IntR(E,D)$. This implies that $d$ is an atom of $\IntR(E,D)$. 
\end{proof}

Let $k$ be a field and $M$ be a totally ordered, positive monoid. Take $D$ to be $k[t;M]_{(t;M)}$ and $K$ to be its field of fractions. Lemma \ref{Lem:BiggestFilter} shows that if $M$ is integrally terminal, an element of $K$ with sufficiently large value is in $D$. The part of $M$ that is not sufficiently large does not behave as nicely. For example, if an integer-valued rational function in $\IntR(K,D)$ has some $a \in K$ such that $v(\varphi(a))$ is not part of the sufficiently large part of $M$, then $\varphi$ is restricted in its form.

\begin{lemma}\label{Lem:Flat}
	Let $k$ be a field and $M$ be a totally ordered, positive monoid. Set $\Gamma \coloneqq \mathsf{gp}(M)$ and suppose $\Gamma$ has no minimal strictly positive element. Let $D = k[t;M]_{(t;M)}$, $K$ be its field of fractions, and $v$ the $M$-valuation on $K$. Take some $\varphi \in \IntR(K,D)$. Suppose there exist $b \in K$ and $\delta, \delta' \in \Gamma_{\geq 0}$ such that $0 < v(\varphi(b)) < \delta < \delta'$ and there does not exist $\alpha \in M$ such that $\delta < \alpha < \delta'$. Then $\{v(\varphi(a)) \mid a \in K \} = \{v(\varphi(b))\}$. 
\end{lemma}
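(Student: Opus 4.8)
The plan is to study $\varphi$ through its minimum valuation function $\minval_\varphi$, which by Lemma \ref{Lem:MinvalForm} is piecewise linear with integer slopes and agrees with $v(\varphi(a))$ (for $v(a)=\gamma$) at all but finitely many $\gamma\in\Gamma$. First I would normalize by translation: replacing $\varphi$ by $\psi(x):=\varphi(x+b)$ keeps $\psi\in\IntR(K,D)$, satisfies $\{v(\psi(a))\mid a\in K\}=\{v(\varphi(a))\mid a\in K\}$, and gives $v(\psi(0))=v(\varphi(b))=:w_0$, so it suffices to treat $b=0$. Continuity of $\psi$ at $0$ (Proposition \ref{Prop:RationalFunctionsContinuousInValuationTopology}) forces $v(\psi(a))=w_0$ whenever $v(a)$ is large; since only finitely many $\gamma$ are exceptional, this yields $\minval_\psi(\gamma)=w_0$ for all sufficiently large $\gamma$. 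This large-$\gamma$ anchor, which lies below $\delta$, is the payoff of translating rather than arguing at the possibly-exceptional point $v(b)$.

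The structural input is the gap. I would first record that $\Gamma$, having no minimal strictly positive element, is order-dense in itself, and deduce that $\Gamma$ is dense in $\Q\Gamma$: given $x<y$ in $\Q\Gamma$ one finds a positive $\gamma\in\Gamma$ with $\gamma<y-x$ (using that finitely many small positive increments telescope below $y-x$), and then some integer multiple of $\gamma$ lands strictly between $x$ and $y$. Hence every nonempty open interval of $\Q\Gamma$ contains infinitely many points of $\Gamma$. Using this, I claim $\minval_\psi$ never takes a value in $(\delta,\delta')$: if it did, on the relevant piece the preimage of $(\delta,\delta')$ would be an interval of positive length in $\Q\Gamma$, hence would contain infinitely many points of $\Gamma$, all but finitely many non-exceptional, each forcing $v(\psi(a))\in(\delta,\delta')\cap M=\emptyset$ — impossible, since $\psi(a)\in D$ makes $v(\psi(a))\in M$. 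Because $\minval_\psi$ is piecewise linear, its image over any order-interval is again an interval of $\Q\Gamma$, so a value $\geq\delta'$ occurring anywhere (while $\minval_\psi=w_0<\delta$ at large $\gamma$) would, by an intermediate-value argument, force a value in $(\delta,\delta')$. Combining, $0\leq\minval_\psi(\gamma)\leq\delta$ for all $\gamma$ (the lower bound again from $\psi(a)\in D$); thus $\minval_\psi$ is bounded and its two unbounded pieces have slope $0$.

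It remains to upgrade this to $\minval_\psi\equiv w_0$ with no exceptional point disturbing the value, and this is where I expect the \emph{real difficulty}. At the level of valuations alone the constraints do not suffice: a small oscillation whose values stay inside $M\cap[0,\delta]$ is invisible to $\minval_\psi$ and to the monoid, so the gap argument by itself will not rule it out. For the gross behaviour I would use that any sloped piece has a nondegenerate image-interval $J\subseteq[0,\delta]$ lying cofinitely in $M$; summing these values and invoking closure of $M$ places $M$-points in each dilate $nJ$, and for large $n$ these dilates overlap into a ray that meets $(\delta,\delta')$, contradicting the gap — but only when the oscillation $\max J-\min J$ is not too small relative to its position. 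The delicate small-oscillation case is the genuine obstacle, and here I would exploit the full force of $\varphi(a)\in D$ rather than merely $v(\varphi(a))\in M$: for a continuum of inputs $a$ the element $\psi(a)$ must admit a presentation over $k[t;M]$ with \emph{all} exponents in $M$, and I would argue that a genuinely moving leading behaviour across a sloped piece forces some exponent of such a presentation into the forbidden region just above $\delta$. The same $D$-membership analysis should then pin down the finitely many exceptional $\gamma$, after which $v(\psi(a))=w_0$ for every $a$ and hence $\{v(\varphi(a))\mid a\in K\}=\{v(\varphi(b))\}$. I expect the valuation/minval bookkeeping to be routine and the $D$-membership step to carry the essential content.
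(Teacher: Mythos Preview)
Your argument stalls exactly where you flag the ``real difficulty,'' and the resolution you sketch does not work. Showing directly that $\minval_\psi$ (for $\psi(x)=\varphi(x+b)$) is constant is genuinely hard: the constraint ``$\minval_\psi$ takes values in $M\cap[0,\delta]$'' is far too weak to force constancy, since $M\cap[0,\delta]$ may well be a nontrivial interval of $\Gamma$ inside which a piecewise linear function can oscillate freely. Your dilation idea $nJ$ does not help either --- the monoid $M$ is not assumed closed in $\Gamma$, and even when the sums $nJ$ meet $(\delta,\delta')$ this says nothing about what $\minval_\psi$ itself does. The vague ``$D$-membership'' step at the end is not a proof: there is no mechanism by which a moving leading term of $\psi(a)$ forces an exponent into $(\delta,\delta')$.

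The paper sidesteps all of this with one idea you are missing: subtract a constant. After translating so $b=0$, set $\psi(x):=\varphi(x)-\varphi(0)$. Then $\psi\in\IntR(K,D)$ and $\psi(0)=0$, so $\minval_\psi(\gamma)\to\infty$ as $\gamma\to\infty$; in particular $\minval_\psi(\gamma)\geq\delta'$ for large $\gamma$. Now your own gap/piecewise-linearity argument, applied from \emph{above} rather than from below, gives $\minval_\psi(\gamma)\geq\delta'$ for all $\gamma$ --- and this direction is easy, because once $\minval_\psi$ is on the $\geq\delta'$ side it cannot cross the gap to reach $[0,\delta]$. To handle an exceptional point $a$ (where possibly $v(\psi(a))<\delta'$) the paper uses one more substitution $\rho(x):=\psi(a(1+x))$: for generic $d$ with $v(d)<0$ one has $\minval_\rho(v(d))=\minval_\psi(v(a)+v(d))\geq\delta'$, while $\minval_\rho(\gamma)=v(\rho(0))=v(\psi(a))\leq\delta$ for large $\gamma$, contradicting the gap for $\rho$. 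Hence $v(\varphi(a)-\varphi(0))\geq\delta'>v(\varphi(0))$ for every $a$, and the ultrametric inequality finishes: $v(\varphi(a))=v(\varphi(0))$. The whole argument is a few lines once you subtract the constant; none of the $D$-membership analysis you propose is needed.
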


\begin{proof}
	Without loss of generality, we can assume that $b = 0$. Consider the function $\psi(x) \coloneqq \varphi(x) - \varphi(0)$. Since $\psi(0) = 0$, when we write $\psi(x) = \frac{f(x)}{g(x)}$ for some $f, g \in K[x]$ coprime, we have that $\minval_g(\gamma) = v(g(0)) < \infty$ for sufficiently large $\gamma \in \Gamma$ and $\minval_f(\gamma)$ can be arbitrarily large because $x$ divides $f$ in $K[x]$. This implies that $\minval_\psi(\gamma) = \minval_f(\gamma) - v(g(0)) \geq \delta'$ for all sufficiently large values of $\gamma \in \Gamma$. Since $\minval_\psi$ has image contained in $M$, which does not contain the interval from $\delta$ to $\delta'$, exclusive, and Lemma \ref{Lem:MinvalForm} says that $\minval_\psi$ is piecewise linear, we know that $\minval_\psi(\gamma) \geq \delta'$ for all $\gamma \in \Gamma$. 
	
	Suppose there exists $a \in K$ such that $v(\psi(a)) < \delta'$. Consider $\rho(x) \coloneqq \psi(a(1+x))$. By Lemma \ref{Lem:MinvalForm}, for all but finitely many values of $\gamma \in \Gamma$ with $\gamma < 0$, we have $\minval_\rho(\gamma) = v(\rho(d))$ and $\minval_\psi(\gamma) = v(\psi(d))$ for all $d \in K$ such that $v(d) = \gamma$. In particular, we can find a $\gamma \in \Gamma$ with $\gamma < 0$ such that $\minval_\rho(\gamma) = v(\rho(d))$ and $\minval_\psi(v(a) + \gamma) = v(\psi(ad'))$ for all $d, d' \in K$ such that $v(d) = v(d') = \gamma$. This means that there exists $d \in K$ with $v(d) < 0$ such that $\minval_\rho(v(d)) = v(\rho(d))$ and $\minval_\psi(v(a) + v(d)) = v(\psi(a(1+d)))$. Then
	\[
		\minval_\rho(v(d)) = v(\rho(d)) = v(\psi(a(1+d))) = \minval_\psi(v(a) + v(d)) \geq \delta'.
	\]  
	However, $v(\rho(0)) = v(\psi(a)) < \delta$. Writing $\rho(x) = \frac{F(x)}{G(x)}$ for $F, G \in K[x]$ coprime allows us to see that $\minval_F(\gamma) = v(F(0))$ and $\minval_G(\gamma) = v(G(0))$ for sufficiently large $\gamma$, so $\minval_\rho(\gamma) = v(F(0)) - v(G(0)) = v(\rho(0)) < \delta$ for $\gamma \in \Gamma$ sufficiently large. This is a contradiction since $\minval_\rho$ cannot attain both values greater than or equal to $\delta'$ and strictly less than $\delta$ due to Lemma \ref{Lem:MinvalForm} and the fact that $M$ does not contain any values strictly between $\delta$ and $\delta'$. Thus, $v(\psi(a)) \geq \delta'$ for all $a \in K$. 
	
	Now we see that $v(\varphi(a) - \varphi(0)) \geq \delta'$ for all $a \in K$. Since $v(\varphi(0)) < \delta'$, we have that $v(\varphi(a)) = v(\varphi(0))$ for all $a \in K$. 

\end{proof}

\begin{example}
	Let $M = \{0\} \cup [2,3] \cup [4, \infty)$ be a submonoid of $\R$ and $k$ to be any field. Take $D$ to be $k[t;M]_{(t;M)}$ and $K$ its field of fractions. Let $v$ be the $M$-valuation on $K$. Take $\varphi \in \IntR(K,D)$. Suppose that there exists $b \in K$ such that $v(\varphi(b)) = 2$. Since the interval $(3,4)$ is missing from $M$, then we must have $v(\varphi(a)) = 2$ for all $a \in K$. 
\end{example}

The previous example also satisfies the following theorem if $k$ is not algebraically closed. Another example that also satisfies the following theorem is if we replaced $M$ with $\left(\{0,1,\sqrt{2}\} \cup ([2\sqrt{2}-1,\infty)  \right)\cap \Z[\sqrt{2}]$. The field $k$ can be algebraically closed in this case and still satisfy the conditions of the theorem since $\mathsf{gp}(M) = \Z[\sqrt{2}]$ is not divisible. 

\begin{theorem}\label{Thm:ExtendSetOfLengths}
	Let $M$ be a totally ordered, integrally terminal, positive, bounded factorization monoid. Suppose that $\Gamma \coloneqq \mathsf{gp}(M)$ has no minimal strictly positive element. Further suppose that there exists some $\alpha \in M$ for which $M$ is integrally terminal and some $\beta \in \Gamma$ with $0 < \beta < \alpha$ such that no $\gamma \in M$ has the property $\beta < \gamma < \alpha$. Let $k$ be a field and suppose that $k$ is not algebraically closed or $\Gamma$ is not divisible. Take $D = k[t;M]_{(t;M)}$, $K$ to be its field of fractions, and $v$ to be the $M$-valuation on $K$. Then for any nonzero, nonunit $d \in D$ with $v(d) > 3\alpha$, we have
	\[
	\mathcal{L}_D(d) \cup \{2, \dots, N-2 \} \subseteq \mathcal{L}_{\IntR(K,D)}(d) \subseteq \{2, \dots, L_D(d)\},
	\]
	where $N \coloneqq \min\{n\in \N \mid n\alpha > v(d)\}$. 
\end{theorem}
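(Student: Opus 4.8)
The goal is to sandwich the set of lengths $\mathcal{L}_{\IntR(K,D)}(d)$ between the prescribed sets. The plan is to prove the three containments separately. For the upper bound $\mathcal{L}_{\IntR(K,D)}(d) \subseteq \{2, \dots, L_D(d)\}$, I would argue that any factorization of $d$ in $\IntR(K,D)$ must have length at least $2$ because $d$ is a nonunit and $\IntR(K,D)$ is local (by Theorem \ref{Thm:Atomic}), so no single atom can equal $d$ unless $d$ were already an atom of $D$ — and here I would need to rule that out or fold it in, using that $v(d) > 3\alpha$ is large. The upper bound $L_D(d)$ should follow by evaluating a factorization at a suitable point $a \in K$: if $d = \varphi_1 \cdots \varphi_n$ in $\IntR(K,D)$, then $d = \varphi_1(a) \cdots \varphi_n(a)$ in $D$, and by Lemma \ref{Lem:Flat} (choosing $a$ so that each $\varphi_i(a)$ lands in the ``gap-controlled'' region), each $\varphi_i(a)$ is a genuine nonunit of $D$, giving a factorization of $d$ in $D$ of the same length $n$, whence $n \leq L_D(d)$.

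\textbf{The lower containments.}
The containment $\mathcal{L}_D(d) \subseteq \mathcal{L}_{\IntR(K,D)}(d)$ is the transfer of existing factorizations: if $d = d_1 \cdots d_n$ with each $d_i$ an atom of $D$, then by Lemma \ref{Lem:AtomTransfer} each $d_i$ remains an atom of $\IntR(K,D)$, so this is an honest factorization of length $n$ in the larger ring. The genuinely new content is $\{2, \dots, N-2\} \subseteq \mathcal{L}_{\IntR(K,D)}(d)$, and this is where I expect to spend the real effort. The idea is to manufacture factorizations of $d$ of every intermediate length $m$ with $2 \le m \le N-2$ using the ``zig-zag'' rational functions from Lemma \ref{Lem:ZigZag}: these let me build functions $\varphi_{\alpha,\alpha',\varepsilon}$ whose valuations jump by a controlled amount of roughly $\alpha$ across a threshold, while staying $\geq 0$ everywhere. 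Given a target length $m$, the plan is to write $d = \varphi_1 \cdots \varphi_m$ where the $\varphi_i$ are atoms built so that the sum of their valuations reconstitutes $v(d)$, exploiting that $v(d) > 3\alpha$ allows enough room to distribute value among $m \le N-2$ factors while keeping each factor a nonunit atom.

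\textbf{Where the difficulty lies.}
The main obstacle will be constructing, for each intermediate length $m$, an explicit product of $\IntR(K,D)$-\emph{atoms} equalling $d$ and verifying both that each factor genuinely lies in $\IntR(K,D)$ (valuation nonnegative at every point of $K$) and that each is irreducible. The gap hypothesis ($\beta < \gamma < \alpha$ contains no element of $M$) together with Lemma \ref{Lem:Flat} is the key tool: a factor whose valuation at some point lands strictly between $\beta$ and the next available value of $M$ is forced to be ``flat'' (constant valuation), which pins down its structure and lets me certify atomicity by an argument like the one in Lemma \ref{Lem:StoneWeierstrass} or Lemma \ref{Lem:AtomsInIntREV}. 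The hypothesis that $k$ is not algebraically closed or $\Gamma$ is not divisible is precisely what Lemma \ref{Lem:ZigZag} needs, so invoking it is clean; the delicate bookkeeping is choosing the parameters $\alpha, \alpha', \varepsilon$ for each of the $m$ factors so the valuations telescope correctly to $v(d)$ and the count $N-2$ (rather than $N$ or $N-1$) emerges — the two subtracted units presumably account for boundary factors needed to absorb the ``remainder'' of $v(d)$ modulo $\alpha$ and to correct the division, which I would track carefully using $N = \min\{n : n\alpha > v(d)\}$.
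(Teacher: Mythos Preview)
Your plan matches the paper's proof in all essentials: Lemma~\ref{Lem:AtomTransfer} for $\mathcal{L}_D(d)\subseteq\mathcal{L}_{\IntR(K,D)}(d)$, evaluation at a point for the upper bound, and Lemma~\ref{Lem:ZigZag} plus Lemma~\ref{Lem:Flat} for manufacturing factorizations of each length $\ell\in\{2,\dots,N-2\}$. Two small corrections will make your execution cleaner. First, for the upper bound you do not need Lemma~\ref{Lem:Flat} at all: since $\IntR(K,D)$ is local with maximal ideal $\IntR(K,\m)$ (this is part of what Theorem~\ref{Thm:Atomic} establishes), every nonunit $\varphi_i$ lies in $\IntR(K,\m)$, so $\varphi_i(0)\in\m$ automatically; the paper simply evaluates at $0$. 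Second, your description of the construction is slightly off: the factors are not built so that each ``jumps by roughly $\alpha$''; rather, for a target length $\ell$ the paper takes $\ell-1$ identical copies of a single $\varphi_{\alpha,\alpha',\varepsilon}$ with $\alpha'=\frac{v(d)-\alpha}{\ell-1}$, plus the remainder $\rho=d/\varphi_{\alpha,\alpha',\varepsilon}^{\ell-1}$. The point is that both $\varphi_{\alpha,\alpha',\varepsilon}$ and $\rho$ attain the value $\alpha$ \emph{exactly} at some $a\in K$ while being non-constant-valued; then if either factored as $\psi_1\psi_2$ with both $\psi_i$ nonunits, each $v(\psi_i(a))$ would be forced into $(0,\beta]$, Lemma~\ref{Lem:Flat} would make each $\psi_i$ constant-valued, and the product could not be non-constant-valued --- a contradiction. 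The bound $\ell\le N-2$ is exactly what guarantees $\alpha'>\alpha$ and $v(d)-(\ell-1)\alpha\ge 2\alpha$, so that $\rho$ is non-constant-valued and lands in $\IntR(K,D)$ via Lemma~\ref{Lem:BiggestFilter}. (Your references to Lemma~\ref{Lem:StoneWeierstrass} and Lemma~\ref{Lem:AtomsInIntREV} are misplaced; those concern the valuation-domain setting and play no role here.)
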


\begin{proof}
	We observe that from Lemma \ref{Lem:AtomTransfer}, we have that atoms of $D$ are atoms of $\IntR(K,D)$. Thus, $\mathcal{L}_D(d) \subseteq \mathcal{L}_{\IntR(K,D)}(d)$. 
	
	The existence of $\beta \in \Gamma$ with $0 < \beta < \alpha$ such that no $\gamma \in M$ has the property $\beta < \gamma < \alpha$ means that there does not exist $\gamma \in M$ such that $0 < \gamma < \alpha - \beta$. This implies that $M$ is bounded away from $0$. Since $M$ is bounded away from $0$, there exists $\gamma_1 \in \Gamma$ with $\gamma_1 > 0$ such that for all nonzero $\gamma \in M$, we have that $\gamma \geq \gamma_1$. 
	
	We now split into two cases. For the first case, we assume that $k$ is not algebraically closed and $\Gamma$ is divisible. For the second case, we assume the $\Gamma$ is not divisible.
	
	\begin{enumerate}
		\item Assume that $k$ is not algebraically closed and $\Gamma$ is divisible. We now fix an $\ell \in \{2, \dots, N-2\}$ and set $\alpha' = \frac{v(d)-\alpha}{\ell - 1}$. Then we write
		\[
		d = \underbrace{\varphi_{\alpha, \alpha',\varepsilon} \cdots \varphi_{\alpha, \alpha',\varepsilon}}_{\text{$\ell - 1$ times}} \cdot \frac{d}{\varphi_{\alpha, \alpha', \varepsilon}^{\ell-1}},
		\]
		where $\varphi_{\alpha, \alpha', \varepsilon}\in K(x)$ is such that there exist $\delta, \delta' \in \Q\Gamma$ so that for all $a \in K$, we have 	
		\[
		v(\varphi_{\alpha, \alpha', \varepsilon}(a)) = \begin{cases}
			\alpha, & \text{if } v(a) < \delta, \\
			\alpha', & \text{if } v(a) > \delta',
		\end{cases}
		\]
		and $\alpha \leq  v(\varphi_{\alpha, \alpha', \varepsilon}(a)) \leq \alpha'$ if $\delta \leq v(d) \leq \delta'$, guaranteed by Lemma \ref{Lem:ZigZag}, using any value for $\varepsilon \in \Q\Gamma$ such that $\varepsilon > 0$. We will show that each factor is an atom of $\IntR(K,D)$. We have that $\varphi_{\alpha, \alpha', \varepsilon} \in \IntR(K,D)$ by Lemma \ref{Lem:BiggestFilter}. Suppose that $\varphi_{\alpha, \alpha', \varepsilon} = \psi_1\psi_2$ for some $\psi_1, \psi_2 \in \IntR(K,D)$. Take $a \in K$ with $v(a) < 0$. Then $\alpha = v(\varphi_{\alpha, \alpha'}(a)) = v(\psi_1(a)) + v(\psi_2(a))$. If $v(\psi_1(a))$ or $v(\psi_2(a))$ is $0$, then $\psi_1$ or $\psi_2$ is a unit of $\IntR(K,D)$ by Theorem \ref{Thm:Atomic}. If not, then $0 < v(\psi_i(a)) \leq \beta$ for $i \in \{1,2\}$, but by Lemma \ref{Lem:Flat}, we have that $\{v(\varphi_{\alpha, \alpha', \varepsilon}(a)) \mid a \in K \}$ is a singleton, a contradiction. Thus, $\varphi_{\alpha, \alpha', \varepsilon}$ is an atom of $\IntR(K,D)$. 
		
		Now we consider $\rho \coloneqq \frac{d}{\varphi_{\alpha, \alpha'}^{\ell-1}}$. Letting $a \in K$, we calculate that
		\[
		v(\rho(a)) = \begin{cases}
			v(d) - (\ell -1)\alpha , & v(a) < \delta,\\
			v(d)-(\ell-1)\alpha' = \alpha, & v(a) > \delta',
		\end{cases}
		\]
		and $\alpha \leq v(\rho(a)) \leq v(d) - (\ell - 1)\alpha$ if $\delta \leq v(a) \leq \delta'$. We also calculate that $v(d) - (\ell-1)\alpha \geq v(d) - (N-3)\alpha \geq 2\alpha$. This shows that $\rho \in \IntR(K,D)$ by Lemma \ref{Lem:BiggestFilter}. We can make a similar argument as the one for $\varphi_{\alpha, \alpha', \varepsilon}$ to show that $\rho$ is an atom. 
		
		Since we have provided a factorization of $d$ in $\IntR(K,D)$ of length $\ell$ for each $\ell \in \{2, \dots, N-2\}$, we have that $\{2, \dots, N-2\} \subseteq \mathcal{L}_{\IntR(K,D)}(d)$. 
		
		\item Now assume that $\Gamma$ is not divisible. Fix $\ell \in \{2, \dots, N-2\}$. Let $\alpha' = \frac{v(d)-\alpha}{\ell-1}$ and $\varepsilon = \frac{\gamma_1}{\ell-1}$. By Lemma \ref{Lem:ZigZag}, there exists $\varphi_{\alpha, \alpha', \varepsilon} \in K(x)$ such that there exist $\delta, \delta' \in \Q\Gamma$ and some $\alpha'' \in \Gamma$ with $\alpha' - \varepsilon < \alpha'' \leq \alpha'$ so that for all $d \in K$, we have
		\[
		v(\varphi_{\alpha, \alpha', \varepsilon}(d)) = \begin{cases}
			\alpha, & \text{if } v(d) < \delta, \\
			\alpha'', & \text{if } v(d) > \delta',
		\end{cases}
		\]
		and $\alpha \leq  v(\varphi_{\alpha, \alpha', \varepsilon}(d)) \leq \alpha''$ if $\delta \leq v(d) \leq \delta'$. Now write again \[\varphi = \underbrace{\varphi_{\alpha, \alpha', \varepsilon} \cdots \varphi_{\alpha, \alpha', \varepsilon}}_{\text{$\ell - 1$ times}} \cdot \frac{d}{\varphi_{\alpha, \alpha', \varepsilon}^{\ell-1}}.\] We can show that $\varphi_{\alpha, \alpha', \varepsilon}$ is an atom of $\IntR(K,D)$ as before. 
		
		Set $\rho = \frac{d}{\varphi_{\alpha, \alpha', \varepsilon}^{\ell-1}}$. Letting $a \in K$, we calculate that
		\[
		v(\rho(a)) = \begin{cases}
			v(d) - (\ell -1)\alpha , & v(a) < \delta,\\
			v(d)-(\ell-1)\alpha'' \geq \alpha, & v(a) > \delta',
		\end{cases}
		\]
		and $\alpha \leq v(\rho(a)) \leq v(d) - (\ell - 1)\alpha$ if $\delta \leq v(a) \leq \delta'$. Since $\alpha' - \varepsilon < \alpha'' \leq \alpha'$, we have that $\alpha \leq v(d) - (\ell-1)\alpha'' < \alpha+\gamma_1$. Utilizing Lemma \ref{Lem:BiggestFilter} shows that $\rho \in \IntR(K,D)$.
		
		Now we write $\rho = \psi_1\psi_2$ for some $\psi_1, \psi_2 \in \IntR(K,D)$. For all $a \in K$ with $v(a)> \delta'$, we have that $\alpha \leq v(\psi_1(a)) + v(\psi_2(a)) \leq \alpha + \gamma_1$. If for such an $a$ neither $v(\psi_1(a))$ nor $v(\psi_2(a))$ is $0$, then we must have $0 < v(\psi_1(a)) \leq \beta$ and $0 < v(\psi_2(a)) \leq \beta$. Otherwise, $v(\psi_i(a)) \geq \alpha$ for some $i \in \{1,2\}$ but then $0 < v(\psi_j(a)) < \gamma_1$ for $j \in \{1,2\}$ such that $j \neq i$, a contradiction. However, now Lemma \ref{Lem:Flat} implies that $\{v(\rho(a)) \mid a \in K\}$ is a singleton, a contradiction. Thus, for some $i \in \{1,2\}$, we have $v(\psi_i(a)) = 0$ for $a \in K$ such that $v(a)$ is sufficiently large. Therefore $\psi_1$ or $\psi_2$ is a unit of $\IntR(K,D)$ by Theorem \ref{Thm:Atomic}.
	\end{enumerate}

	Lastly, if we write $d = \psi_1 \cdots \psi_n$ for atoms $\psi_1, \dots, \psi_n \in \IntR(K,D)$, then we see that $d = \psi_1(0) \cdots \psi_n(0)$ is a product of $n$ nonunit elements in $D$. Therefore, $n \leq L_D(d)$. 
\end{proof}

\section{Positive monoids with one gap}\label{Sect:1toInfinity}

\indent\indent Let $\Gamma$ be an additive subgroup of $\R$ with no minimal strictly positive element. We can scale the elements so that without loss of generality, $1 \in \Gamma$. In this section, we want to focus on integer-valued rational functions over the ring $k[t;M]_{(t;M)}$, where $k$ is a field and $M$ is of the form $M = \{0\} \cup \{\gamma \in \Gamma \mid \gamma \geq 1\}$. The reason for this is that we can always utilize Lemma \ref{Lem:BiggestFilter} to understand the factorization in the base ring $k[t;M]_{(t;M)}$.

\begin{proposition}\label{Prop:FactoringOneInfinity}
	Suppose that $\Gamma$ is an additive subgroup of $\R$ with no minimal strictly positive element and $1 \in \Gamma$. Let  $M = \{0\} \cup \{\gamma \in \Gamma \mid \gamma \geq 1\}$. Take $D$ to be $D = k[t;M]_{(t;M)}$ for some field $k$. Suppose that $v$ is the $M$-valuation on $D$. Then for any nonzero, nonunit element $d \in D$, we have
	\[
	\mathcal{L}_D(d) = \left\{ \lim_{r \to 2^-} \left\lceil \frac{v(d)}{r} \right\rceil, \dots, \lfloor v(d) \rfloor - 1, \lfloor v(d) \rfloor  \right\}
	\]
	and
	\[
	c_D(d) = \begin{cases}
		0,& \text{if $1 \leq v(d) < 2$},\\
		2,& \text{if $2 \leq v(d) < 3$},\\
		3,& \text{if $v(d) \geq 3$}.
	\end{cases}
	\]
\end{proposition}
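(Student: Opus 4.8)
The plan is to first pin down the atoms of $D$ and then treat the length set and the catenary degree separately. Using Lemma \ref{Lem:BiggestFilter} with $\delta = 1 \in M$, every $c \in K$ with $v(c) \ge 1$ lies in $D$, while an element of $D$ of valuation $0$ is a unit and no element of $D$ has valuation in $(0,1)$ (valuations of elements of $k[t;M]$, and hence of the localization $D$, lie in $M$). Thus the nonzero nonunits of $D$ are exactly the $c$ with $v(c) \ge 1$, and since $v(bc) = v(b) + v(c)$ with each nonunit contributing at least $1$, an element $a$ is an atom if and only if $v(a) \in [1,2)$; when $v(a) \ge 2$ one splits $a = t\cdot(a t^{-1})$, both factors being nonunits by the filter lemma. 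This reduces everything to bookkeeping with valuations in $\Gamma$, exploiting that $\Gamma$ is dense in $\R$ (it has no least positive element).

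For the length set, if $d = a_1\cdots a_n$ with each $a_i$ an atom then $n \le \sum v(a_i) = v(d) < 2n$, giving $v(d)/2 < n \le v(d)$ and hence $\mathcal L_D(d) \subseteq \{\,n : v(d)/2 < n \le \lfloor v(d)\rfloor\,\}$, whose least element is $\lim_{r\to 2^-}\lceil v(d)/r\rceil$. Conversely, for each such $n$ I would choose $\gamma_1,\dots,\gamma_{n-1} \in [1,2)\cap\Gamma$ by density so that $\gamma_n \coloneqq v(d) - \sum_{i<n}\gamma_i \in [1,2)$ as well --- the constraints $n > v(d)/2$ and $n \le v(d)$ make the relevant interval for the partial sums nonempty --- and write $d = t^{\gamma_1}\cdots t^{\gamma_{n-1}}\cdot\big(d\,t^{-(\gamma_1+\cdots+\gamma_{n-1})}\big)$, each factor being an atom by the previous paragraph. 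This realizes every admissible length.

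For the catenary degree I would argue by the size of $v(d)$. If $v(d) < 2$ then $d$ is an atom and $c_D(d) = 0$. If $v(d) \in [2,3)$ then $\mathcal L_D(d) = \{2\}$, so every factorization has length $2$; two distinct length-$2$ factorizations can share no atom (a shared atom determines its cofactor, forcing equality), so any two distinct factorizations are at distance exactly $2$, whence $c_D(d) = 2$ once I exhibit two distinct factorizations. The latter follows from the filter lemma's flexibility: starting from $d = a_1 a_2$ and picking $\theta \in (1,2)\cap\Gamma$, the pair $\big(a_1(1+t^{\theta-1}),\, a_2/(1+t^{\theta-1})\big)$ is again a factorization into two atoms, and $1 + t^{\theta-1} \notin D$ forces it to differ from the first. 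Finally, if $v(d) \ge 3$ the length set is an interval with at least two values; since an atom cannot be split, no distance-$\le 2$ move changes the length (the only length-changing rearrangements of size $\le 2$ would equate an atom with a product of two atoms), so factorizations of different lengths force $c_D(d) \ge 3$, and it remains to prove the matching bound $c_D(d) \le 3$.

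The main obstacle is this upper bound, which I would reduce to two claims: any two factorizations of the same length are linked by distance-$\le 2$ moves, and consecutive lengths are bridged by a single $2\leftrightarrow 3$ move of distance $3$. The same-length connectivity I would prove by a normal-form reduction: repeated distance-$2$ swaps $(a_i,a_j)\mapsto (t^{v(a_i)},\, a_ia_j t^{-v(a_i)})$ turn any length-$\ell$ factorization into one whose first $\ell-1$ atoms are pure powers of $t$, after which I redistribute the exponents toward a fixed reference by further distance-$2$ swaps against the single impure atom, using density of $\Gamma$ to interpolate the exponents in increments small enough that every intermediate factor stays an atom (valuation in $[1,2)$). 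Bridging consecutive lengths then amounts to choosing a reference containing two atoms whose valuations sum into $[3,4)$ --- feasible exactly when the length is below the maximal one, by $v(d)/2 < \ell \le v(d)$ --- and re-splitting that product of two atoms into three. The delicate point throughout is the bookkeeping that keeps all intermediate exponents inside $[1,2)$ while respecting $\sum \gamma_i = v(d)$, and it is here that the density of $\Gamma$ and the length constraints must be used with care.
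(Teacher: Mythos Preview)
Your treatment of the atoms and of the length set is essentially the paper's argument; if anything, your use of the density of $\Gamma$ to pick $\gamma_1,\dots,\gamma_{n-1}\in[1,2)\cap\Gamma$ is slightly more careful than the paper, which simply writes $d=(t^{v(d)/\ell})^{\ell-1}\cdot d\,t^{-(\ell-1)v(d)/\ell}$. The lower bound $c_D(d)\ge 3$ for $v(d)\ge 3$ is also argued the same way.

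Two points where you diverge from the paper are worth noting.

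\emph{Distinct factorizations when $2\le v(d)<3$.} Your twist $(a_1,a_2)\mapsto\bigl(a_1(1+t^{\theta-1}),\,a_2/(1+t^{\theta-1})\bigr)$ does produce a length-$2$ factorization, but the justification ``$1+t^{\theta-1}\notin D$ forces it to differ'' only rules out $a_1(1+t^{\theta-1})\sim a_1$; it does not exclude the swap $a_1(1+t^{\theta-1})\sim a_2$, which can actually occur (take $a_1=t$, $a_2=t+t^\theta$: your move returns the same pair). This is easily patched by varying $\theta$, but the paper's route is cleaner: it shows directly that $t+t^\alpha$ and $t+t^\beta$ are non-associate for distinct $\alpha,\beta\in(1,2)\cap\Gamma$ (since $(t+t^\alpha)/(t+t^\beta)-1$ has valuation in $(0,1)\not\subseteq M$) and that each divides $d$.

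\emph{The upper bound $c_D(d)\le 3$ for $v(d)\ge 3$.} Here your strategy is genuinely different and considerably more elaborate than necessary. You propose to (i) $2$-connect all same-length factorizations via a normal-form reduction and exponent redistribution, and then (ii) bridge consecutive lengths by a single $2\leftrightarrow 3$ move. Step (i), the redistribution of exponents while keeping every intermediate factor in $[1,2)$, is exactly the ``delicate bookkeeping'' you flag, and making it precise takes real work. The paper bypasses all of this with a single monotone argument: from any factorization $a_1\cdots a_n$, pick two factors $a_i,a_j$ with $a_i,a_j\not\sim t$ and replace the pair by $t\cdot(a_ia_j/t)$ if $v(a_ia_j)\in[2,3)$ or by $t\cdot t\cdot(a_ia_j/t^2)$ if $v(a_ia_j)\in[3,4)$; each step has distance $\le 3$ and strictly increases the number of factors associate to $t$, so one reaches the canonical factorization $t^{\lfloor v(d)\rfloor-1}\cdot\bigl(d/t^{\lfloor v(d)\rfloor-1}\bigr)$ in finitely many moves. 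This gives a $3$-chain between any two factorizations via the canonical one, with no need for same-length $2$-connectivity. Your plan would, if carried out, yield the finer statement that same-length factorizations are $2$-connected, but for the proposition as stated the paper's shortcut is much simpler.
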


\begin{proof}
	First note that the atoms of $D$ are exactly the elements of $d \in D$ such that $1 \leq v(d) < 2$. If $v(d) \geq 2$, then $t$ strictly divides $d$ by Lemma \ref{Lem:BiggestFilter}.
	
	Let $d \in D$ be a nonzero, nonunit element. Next, we fix a value for $\ell$ that is in $\left\{ \lim\limits_{r \to 2^-} \left\lceil \frac{v(d)}{r} \right\rceil, \dots, \lfloor v(d) \rfloor - 1, \lfloor v(d) \rfloor  \right\}$. We claim that there is an element $s \in D$ such that $\frac{v(d)-2}{\ell - 1} < v(s) \leq \frac{v(d)-1}{\ell - 1}$ and $1 \leq v(s) < 2$. We have $\ell \leq \lfloor v(d) \rfloor \leq v(d)$. Thus, $1 \leq \frac{v(d)-1}{\ell - 1}$. On the other hand, $ \lim\limits_{r \to 2^-} \left\lceil \frac{v(d)}{r} \right\rceil \leq \ell$. If $v(d) \notin 2\Z$, then $\ell \geq  \left\lceil \frac{v(d)}{2} \right\rceil >  \frac{v(d)}{2}$. If $v(d) \in 2\Z$, then $\lim\limits_{r \to 2^-} \left\lceil \frac{v(d)}{r} \right\rceil = \frac{v(d)}{2} + 1$, so $\ell > \frac{v(d)}{2}$. In either case, we obtain $\ell > \frac{v(d)}{2}$, implying $\frac{v(d)-2}{\ell-1} < 2$. This shows that the intervals $(\frac{v(d)-2}{\ell-1}, \frac{v(d)-1}{\ell-1}]$ and $[1,2)$ have a nonempty intersection. Since $\Gamma$ is a dense subgroup of $\R$, there exists $\gamma \in M$ such that $\gamma \in (\frac{v(d)-2}{\ell-1}, \frac{v(d)-1}{\ell-1}] \cap [1,2)$. Thus, we may take $s = t^\gamma$. Now we can write 
	\[
		d = \underbrace{s \cdots s}_{\text{$\ell -1$ times}} \cdot \frac{d}{s^{\ell - 1}}.
	\]
	Since $1 \leq v(s) < 2$, we know that $s$ is an atom of $S$. Furthermore, $\frac{v(d)-2}{\ell - 1} < v(s) \leq \frac{v(d)-1}{\ell - 1}$ implies that $1 \leq v(d) - (\ell - 1)v(s) < 2$, so $\frac{d}{v(s)^{\ell-1}}$ is also an atom of $S$. 
	
	Now we want to show that $d$ has no other factorization lengths. Suppose that $d = a_1\cdots a_\ell$ for some $\ell \in \N$ and each $a_i \in D$ is irreducible. Then $v(a_i) \in [1,2)$ for each $i$ so $v(d) \in [\ell, 2\ell)$. Since $\ell\leq v(d)$, we must have $\ell \leq \lfloor v(d) \rfloor$. On the other hand, we have $\frac{v(d)}{2} < \ell$. Thus, $\lim\limits_{r \to 2^-} \left\lceil \frac{v(d)}{r} \right\rceil \leq \ell$. This shows that 
	\[
	\mathcal{L}_D(d) = \left\{ \lim_{r \to 2^-} \left\lceil \frac{v(d)}{r} \right\rceil, \dots, \lfloor v(d) \rfloor - 1, \lfloor v(d) \rfloor  \right\}
	\]

	Now we calculate the catenary degree. If $1 \leq v(d) < 2$, then $d$ is an atom of $D$ so the catenary degree is $0$. If $2 \leq v(d) < 3$, then $\mathcal{L}_D(d) = \{2\}$. Moreover, we have that for any distinct $\alpha, \beta \in M$ such that $1 < \alpha, \beta < 2$, the elements $t + t^\alpha$ and $t + t^\beta$ are not associates. Otherwise, $\frac{t+t^\alpha}{t+t^\beta}-1 = \frac{t^{\alpha-1}+t^{\beta-1}}{1+t^{\beta-1}} \in D$, but the valuation of this element is $\min\{\alpha-1,\beta-1\}$ which is not in $M$, a contradiction. Thus, for $d \in D$ such that $2 \leq v(d) < 3$, since $t + t^{\alpha}$ for any $\alpha \in M$ such that $1 < \alpha < 2$ is a distinct divisor of $d$ up to association, we know that $\abs{\mathsf{Z}_D(d)} > 1$, meaning that $c_D(d) = 2$. 
	
	Now we assume that $d \in D\setminus\{0\}$ is such that $v(d) \geq 3$. We will establish a $3$-chain from any factorization of $d$ to the factorization of $d$ given by \[d = \underbrace{t\cdots t}_{\text{$\lfloor v(d) \rfloor-1$ times}} \frac{d}{t^{\lfloor v(d) \rfloor-1}}.\]
	Once we establish these 3-chains, since $\abs{\mathcal{L}_D(d)} > 1$, we know that $c_D(d) = 3$ since a 2-chain can only be established between factorizations of the same length. Take a factorization $d = a_1 \cdots a_n$, where each $a_i \in D$ is irreducible. If there does not exist distinct $i, j \in \{1, \dots, n\}$ such that $a_i, a_j \not\sim t$, then the factorization $a_1 \cdots a_n$ is the same as $t \cdots t \cdot  \frac{d}{t^{\lfloor v(d) \rfloor-1}}$ up to reordering and association. Now suppose that there exist distinct $i, j \in \{1, \dots, n\}$ such that $a_i, a_j \not\sim t$. Note that $2 \leq v(a_ia_j) < 4$. We know that
	\[
	a_i a_j = \begin{cases}
		t \cdot t \cdot \frac{a_i a_j}{t^2}, & \text{if $3 \leq v(a_ia_j)< 4$},\\
		t \cdot \frac{a_ia_j}{t}, & \text{if $2 \leq v(a_ia_j) < 3$}. 
	\end{cases}
	\]
	In either case, there is a factorization of $d$ that is of distance at most $3$ from $a_1 \cdots a_n$ and has strictly more factors associate to $t$. Thus, there is a $3$-chain between any factorization of $d$. As established before, this implies that $c_D(d) = 3$.  
\end{proof}

Let $D$ be as in the previous proposition and let $K$ be its field of fractions. We compare the factorization of elements $d \in D$ viewed as elements of $D$ and viewed as elements of $\IntR(K,D)$. We see that the set of lengths extends downward to 2, but the catenary degree remains the same. 

\begin{theorem}
	Suppose that $\Gamma$ is an additive subgroup of $\R$ with no minimal strictly positive element and $1 \in \Gamma$. Let  $M = \{0\} \cup \{\gamma \in \Gamma \mid \gamma \geq 1\}$. Take $D$ to be $D = k[t;M]_{(t;M)}$ for some field $k$. Denote by $K$ the field of fractions of $D$. Suppose that $v$ is the $M$-valuation on $K$. Also suppose that $k$ is not algebraically closed or $\mathsf{gp}(M)$ is not divisible. Then for any nonzero, nonunit element $d \in D$, we have
	\[
	\mathcal{L}_{\IntR(K,D)}(d) = \begin{cases}
		\left\{ 2, 3, \dots, \lfloor v(d) \rfloor - 1, \lfloor v(d) \rfloor  \right\}, & \text{if $v(d) \geq 2$},\\
		\left\{1 \right\}, & \text{if $1 \leq v(d) < 2$}
	\end{cases}
	\]
	and
	\[
	c_{\IntR(K,D)}(d) = \begin{cases}
		0,& \text{if $1 \leq v(d) < 2$},\\
		2,& \text{if $2 \leq v(d) < 3$},\\
		3,& \text{if $v(d) \geq 3$}.
	\end{cases}
	\]
	Moreover, for a nonzero, nonunit element $\varphi \in \IntR(K,D)$, we have \[\mathcal{L}_{\IntR(K,D)}(\varphi) =\begin{cases}
		\left\{2,3, \dots, \lfloor \alpha(\varphi)\rfloor \right\}, & \text{if $\alpha(\varphi) \geq 2$}\\
		\{1\}, & \text{if $1 \leq \alpha(\varphi) <2$},
	\end{cases}\] where $\alpha(\varphi) = \inf\{v(\varphi(a)) \mid a \in K \}$. 
\end{theorem}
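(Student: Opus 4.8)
The plan is to prove the general ``Moreover'' statement about an arbitrary $\varphi\in\IntR(K,D)$ first and then recover the claim for $d\in D$ as the special case, since a constant $d$ has $\alpha(d)=\inf_{a\in K}v(d)=v(d)$. First I would record that $M$ meets every hypothesis invoked below: it is totally ordered, positive, bounded away from $0$ (with witness $\gamma_1=1$), a bounded factorization monoid (a length of $\gamma\in M$ is at most $\lfloor\gamma\rfloor$), and integrally terminal for $\alpha=1$ because $[1,\infty)\cap\Gamma\subseteq M$, while any $\beta\in(0,1)\cap\Gamma$ supplies the extra gap hypothesis of Theorem \ref{Thm:ExtendSetOfLengths}. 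Thus Theorem \ref{Thm:Atomic} applies and $\IntR(K,D)$ is local and atomic with maximal ideal $\IntR(K,\m)$, so every nonunit atom $\psi$ satisfies $v(\psi(a))\ge 1$ for all $a$. For the upper bound I argue as in the closing line of the proof of Theorem \ref{Thm:ExtendSetOfLengths}: if $\varphi=\psi_1\cdots\psi_n$ with each $\psi_i$ an atom then $v(\varphi(a))=\sum_i v(\psi_i(a))\ge n$ for every $a$, whence $n\le\alpha(\varphi)$ and so $n\le\lfloor\alpha(\varphi)\rfloor$. This gives $\mathcal{L}_{\IntR(K,D)}(\varphi)\subseteq\{1,\dots,\lfloor\alpha(\varphi)\rfloor\}$, and the case $n=2$ of the same inequality shows that when $\alpha(\varphi)<2$ the element $\varphi$ is an atom, so $\mathcal{L}_{\IntR(K,D)}(\varphi)=\{1\}$.

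For the lower bound I reduce everything to the single claim $(\star)$: every $\varphi'\in\IntR(K,D)$ with $\alpha(\varphi')\ge 2$ is reducible, i.e.\ splits into two atoms. Granting $(\star)$, an arbitrary length $\ell\in\{2,\dots,\lfloor\alpha(\varphi)\rfloor\}$ is realized by peeling off copies of $t$: since $\alpha(\varphi/t^{\ell-2})=\alpha(\varphi)-(\ell-2)\ge 2$, Lemma \ref{Lem:BiggestFilter} gives $\varphi/t^{\ell-2}\in\IntR(K,D)$, and writing $\varphi=t^{\ell-2}\cdot(\varphi/t^{\ell-2})$ and applying $(\star)$ to the last factor produces a factorization of length $\ell$. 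Specializing $\varphi=d$ and reading off the two endpoints $\ell=2$ and $\ell=\lfloor v(d)\rfloor$ then yields the first displayed formula. To prove $(\star)$, write $g:=\minval_{\varphi'}$, which is piecewise linear with $g\ge\alpha(\varphi')\ge 2$ by Lemma \ref{Lem:MinvalForm}. When $\alpha(\varphi')<3$ it suffices to take $\psi_1=t$, for then $\psi_2=\varphi'/t\in\IntR(K,D)$ has $\alpha(\psi_2)=\alpha(\varphi')-1\in[1,2)$. The substantive case is $\alpha(\varphi')\ge 3$, where $\varphi'/t$ is no longer an atom and a divisor with \emph{disjoint} valuation behaviour is needed: using Lemma \ref{Lem:ZigZag} I would construct an atom $\psi_1$ whose $\minval_{\psi_1}$ stays below $g-1$ everywhere (so $\psi_2=\varphi'/\psi_1\in\IntR(K,D)$ by Lemma \ref{Lem:BiggestFilter}), dips below $2$ at one argument (so $\psi_1$ is an atom), and rises to within $1$ of $g$ at another argument (so $\inf_a v(\psi_2(a))<2$ and $\psi_2$ is an atom too). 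I expect this step --- fitting a zigzag from Lemma \ref{Lem:ZigZag} against the piecewise-linear profile $g$ of a \emph{nonconstant} $\varphi'$ so that the two atomic factors have their sub-$2$ dips at disjoint arguments --- to be the main obstacle, exactly as the easier constant analogue was the heart of Theorem \ref{Thm:ExtendSetOfLengths}.

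For the catenary degree, stated only for constant $d$, I would use the length structure just established. If $1\le v(d)<2$ then $d$ is an atom by Lemma \ref{Lem:AtomTransfer}, so $\mathsf{Z}_{\IntR(K,D)}(d)$ is a single class and $c_{\IntR(K,D)}(d)=0$. For $v(d)\ge 2$ the key combinatorial observations are: two distinct length-$2$ factorizations must be disjoint (a shared atom forces equality by cancellation), hence lie at distance $2$; and no length-$2$ factorization $\{p,q\}$ can be joined by a step of distance $\le 2$ to a factorization of length $\ge 3$, because such a step would have to retain $p$ or $q$ and refactor the single remaining atom, which is impossible. Consequently, when $2\le v(d)<3$ (so $\mathcal{L}_{\IntR(K,D)}(d)=\{2\}$) exhibiting two distinct --- hence disjoint --- length-$2$ factorizations, e.g.\ two zigzag splittings with dips at different arguments, forces $c_{\IntR(K,D)}(d)=2$; and when $v(d)\ge 3$ (so $\{2,3\}\subseteq\mathcal{L}_{\IntR(K,D)}(d)$) the same isolation argument forces $c_{\IntR(K,D)}(d)\ge 3$.

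For the matching bound $c_{\IntR(K,D)}(d)\le 3$ I would mirror Proposition \ref{Prop:FactoringOneInfinity}, building a $3$-chain from any factorization toward one richer in factors associate to $t$ and then recursing on the constant $d/t$ of smaller valuation. The two available moves are: replace a pair $\psi_i\psi_j$ whose product has $\alpha\ge 3$ by a length-$3$ refactorization (distance $3$, length increased), and replace a pair with $\alpha\in[2,3)$ by $\{t,\ \psi_i\psi_j/t\}$ (distance $2$, a copy of $t$ introduced). Since any pair of atoms has $\alpha\ge 2$, exactly one move always applies, and at maximal length every pair has $\alpha<3$ so a $t$ can always be extracted; setting that $t$ aside leaves a factorization of $d/t$ to which induction on $\lfloor v(d)\rfloor$ applies, each step keeping distance at most $3$. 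The bookkeeping that these moves have distance at most $3$ even when the atoms involved are nonconstant --- in particular that the recursion terminates at the base cases $v(d)\in[1,3)$ --- is the second place I expect genuine, if routine, work.
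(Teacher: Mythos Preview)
Your proposal is correct in outline and shares its skeleton with the paper---upper bound by summing $v(\psi_i(a))\ge 1$, atoms characterized by $1\le\alpha(\psi)<2$, catenary chains built by repeatedly extracting a factor associate to $t$---but you and the paper take genuinely different routes to the length lower bound for a general $\varphi$, and the paper's route sidesteps precisely the obstacle you flag.

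You reduce everything to the claim $(\star)$ that any $\varphi'$ with $\alpha(\varphi')\ge 2$ splits into two atoms, and then produce length $\ell$ by $\varphi=t^{\ell-2}\cdot(\varphi/t^{\ell-2})$ followed by $(\star)$. The substantive case of $(\star)$, $\alpha(\varphi')\ge 3$, requires you to fit a single zigzag $\psi_1$ against the \emph{nonconstant} profile $a\mapsto v(\varphi'(a))$ so that $v(\psi_1(a))\le v(\varphi'(a))-1$ everywhere and $v(\psi_1(a_0))>v(\varphi'(a_0))-2$ at a point where $v(\varphi'(a_0))$ is near $\alpha(\varphi')$. This is doable (take $\alpha=1$, $\alpha'$ just below $\alpha(\varphi')-1$, and shift the zigzag so its plateau sits over $a_0$), but you leave it as a sketch, and your phrasing in terms of $\minval$ is not quite right: $\minval_{\psi_1}\le g-1$ does not by itself give $v(\psi_1(a))\le v(\varphi'(a))-1$ pointwise, since the inequalities between $v(\rho(a))$ and $\minval_\rho(v(a))$ go the wrong way for a quotient. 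You need the pointwise bound $v(\psi_1(a))\le\alpha''\le\alpha(\varphi')-1$ coming directly from Lemma~\ref{Lem:ZigZag}.

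The paper reverses the order: it first establishes the constant case via Theorem~\ref{Thm:ExtendSetOfLengths} and Proposition~\ref{Prop:FactoringOneInfinity}, and then for general $\varphi$ it borrows the length-$\ell$ factorization $t^{\lfloor\alpha(\varphi)\rfloor}=\psi_1\cdots\psi_\ell$ of a \emph{constant} and writes $\varphi=\dfrac{\varphi}{\psi_2\cdots\psi_\ell}\cdot\psi_2\cdots\psi_\ell$. Because $\psi_2\cdots\psi_\ell=t^{\lfloor\alpha(\varphi)\rfloor}/\psi_1$, one computes $v\bigl((\varphi/(\psi_2\cdots\psi_\ell))(b)\bigr)=v(\varphi(b))-\lfloor\alpha(\varphi)\rfloor+v(\psi_1(b))\ge 1$ for all $b$, and choosing $a$ with $v(\varphi(a))<\lfloor\alpha(\varphi)\rfloor+1$ and arranging $v(\psi_1(a))=1$ forces the first factor below $2$. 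The point is that this requires only that the zigzag $\psi_1$ equal $1$ at a \emph{single prescribed} $a$, which is achieved by a translation or dilation of the zigzag; there is no need to track $\psi_1$ against a moving target. This is exactly the obstacle you anticipate, dissolved by working against a constant instead.

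Your catenary argument is essentially the paper's: both reach the canonical factorization $t\cdots t\cdot(d/t^{\lfloor v(d)\rfloor-1})$ by distance-$\le 3$ moves that trade a pair $\psi_i\psi_j$ for $t\cdot(\psi_i\psi_j/t)$ or $t\cdot\rho_1\rho_2$ according as $\alpha(\psi_i\psi_j)<3$ or $\ge 3$. The paper phrases the termination as ``strictly more factors associate to $t$'' rather than your induction on $\lfloor v(d)\rfloor$; these are equivalent. Your more elaborate isolation argument for $c\ge 3$ is correct but unnecessary: once $\lvert\mathcal L\rvert>1$ the distance between factorizations of different lengths is automatically at least $3$. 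For $c=2$ in the range $2\le v(d)<3$, the paper exhibits the non-associate constant atoms $t+t^\gamma$ rather than zigzag splittings.
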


\begin{proof}
	Let $d \in D$ be a nonzero, nonunit element. Suppose $1 \leq v(d) < 2$. Then Lemma \ref{Lem:AtomTransfer} implies that $d$ is irreducible in $\IntR(K,D)$. Suppose that $2 \leq v(d) \leq 3$. Then we have $\{2, \dots, \left\lfloor v(d) \right\rfloor \} = \mathcal{L}_{D}(d) \subseteq \mathcal{L}_{\IntR(K,D)}(d)$ by Proposition \ref{Prop:FactoringOneInfinity}. Furthermore, if $d = \psi_1 \cdots \psi_n$ for some irreducible elements $\psi_1, \dots, \psi_n \in \IntR(K,D)$, we have $d = \psi_1(0) \cdots \psi_n(0)$, so we have $n \leq L_D(d) =  \left\lfloor v(d) \right\rfloor$. Therefore, \[\mathcal{L}_{\IntR(K,D)}(d) = \left\{ 2, 3, \dots, \lfloor v(d) \rfloor - 1, \lfloor v(d) \rfloor  \right\},\]
	
	If $d \in D$ is nonzero and has the property that $v(d) > 3$, then we use Theorem \ref{Thm:ExtendSetOfLengths}. We see that $\min\{n \in \N \mid n\cdot 1 > v(d) \} \geq \lceil v(d) \rceil$. Thus, we have
	\[
	\mathcal{L}_D(d) \cup \{2, \dots, \lceil v(d) \rceil - 2 \} \subseteq \mathcal{L}_{\IntR(K,D)}(d) \subseteq \{2, \dots, \left\lfloor v(d) \right\rfloor  \}
	\]
	Since Proposition \ref{Prop:FactoringOneInfinity} gives $\mathcal{L}_D(d)$, we have that \[	\mathcal{L}_{\IntR(K,D)}(d) = \left\{ 2, 3, \dots, \lfloor v(d) \rfloor - 1, \lfloor v(d) \rfloor  \right\}.\]
	
	Now we note that for any nonzero, nonunit element $\varphi \in \IntR(K,D)$, the rational function $\varphi$ is an atom if and only if $1 \leq \alpha(\varphi) < 2$. If $\varphi \in \IntR(K,D)$ is such that $1 \leq \alpha(\varphi) < 2$, then there exists $a \in K$ such that $1 \leq v(\varphi(a)) < 2$. Thus, whenever we write $\varphi = \psi_1\psi_2$ for some $\psi_1, \psi_2 \in \IntR(K,D)$, we have that $\varphi(a) = \psi_1(a)\psi_2(a)$. Since $\varphi(a)$ is an atom of $D$, we must have $\psi_1(a)$ or $\psi_2(a)$ be a unit of $D$, which means that $\psi_1$ or $\psi_2$ is a unit of $\IntR(K,D)$ by Theorem \ref{Thm:Atomic}. On the other hand, suppose that $\varphi \in \IntR(K,D)$ is an atom. Then $\varphi(0)$ is an atom of $D$, which means that $1 \leq v(\varphi(0)) < 2$. Thus, $1 \leq \alpha(\varphi) < 2$. 
	
	Suppose that $\varphi \in \IntR(K,D)$ is not an atom of $\IntR(K,D)$. Then $\lfloor \alpha(\varphi) \rfloor \geq 2$. We can write $t^{\lfloor \alpha(\varphi)\rfloor}$ as the product of $\ell$ atoms of $\IntR(K,D)$, where $\ell \in \{2, 3, \dots, \lfloor \alpha(\varphi)\rfloor \}$. Fix such an $\ell$. We have $t^{\lfloor \alpha(\varphi)\rfloor} = \psi_1\cdots \psi_\ell$, where $\psi_1, \dots, \psi_\ell$ are atoms of $\IntR(K,D)$. We also know that there exists $a \in K$ such that $\lfloor \alpha(\varphi) \rfloor \leq v(\varphi(a)) < \lfloor \alpha(\varphi) \rfloor + 1$. From the proof of Theorem \ref{Thm:ExtendSetOfLengths}, we can write the factorization in a way so that $v(\psi_1(a)) =1$. Then $1 \leq \alpha\left(\frac{\varphi}{\psi_2\cdots\psi_\ell}\right) \leq v(\varphi(a)) - (\lfloor \alpha(\varphi) \rfloor - 1) < 2$, meaning that $\frac{\varphi}{\psi_2\cdots\psi_\ell}$ is an atom of $\IntR(K,D)$. Therefore, we can write $\varphi = \frac{\varphi}{\psi_2\cdots\psi_\ell} \cdot \psi_2 \cdots \psi_\ell$. Now, we obtain $\{2, 3, \dots, \lfloor \alpha(\varphi)\rfloor \} \subseteq \mathcal{L}_{\IntR(K,D)}(d)$. 
	
	Write $\varphi = \psi_1 \cdots \psi_n$ for atom $\psi_1, \dots, \psi_n \in \IntR(K,D)$. There exists $a \in K$ such that $\lfloor \alpha(\varphi) \rfloor \leq v(\varphi(a)) < \lfloor \alpha(\varphi) \rfloor + 1$. Thus, $v(\varphi(a)) = v(\psi_1(a)) + \cdots + v(\psi_n(a))$ implies that $n \leq \lfloor \alpha(\varphi) \rfloor$ since $v(\psi_i(a)) \geq 1$ for each $i$. We can then conclude that $\mathcal{L}_{\IntR(K,D)}(\varphi) = \left\{2,3, \dots, \lfloor \alpha(\varphi)\rfloor \right\}$.
	
	Lastly, we calculate catenary degrees. Let $d \in D$ be a nonzero, nonunit element. As we have seen before, if $1 \leq v(d) < 2$, then $d$ is an atom of $\IntR(K,D)$ and therefore $c_{\IntR(K,D)}(d) = 0$. If $2 \leq v(d) < 3$, then $\mathcal{L}_{\IntR(K,D)}(d) = \{2\}$. Moreover, we can show that $t + t^\gamma$ and $t + t^{\gamma'}$ for distinct $\gamma, \gamma' \in M$ such that $1 < \gamma, \gamma' < 2$ are not associates in $\IntR(K,D)$. As a consequence, we have that $\abs{\mathsf{Z}_{\IntR(K,D)}(d)} > 1$, so $c_{\IntR(K,D)}(d) = 2$.
	
	Now we assume that $d \in D$ is a nonzero element such that $v(d) \geq 3$. We want to show that there exists a $3$-chain from any factorization of $d$ in $\IntR(K,D)$ to the factorization of $d$ given by
	\[d = \underbrace{t\cdots t}_{\text{$\lfloor v(d) \rfloor-1$ times}} \frac{d}{t^{\lfloor v(d) \rfloor-1}}.\]
	Write $d = \psi_1 \cdots \psi_n$ for $\psi_1, \dots, \psi_n$ irreducible elements of $\IntR(K,D)$. If there do not exist distinct $i, j$ in $\{1, \dots, n\}$ such that $\psi_i$ and $\psi_j$ are both not associate to $t$, then the factorization $d = \psi_1 \cdots \psi_n$ is the same as $t \cdots t \cdot \frac{d}{t^{\lfloor v(d) \rfloor-1}}$ up to reordering and association. Now suppose that there are distinct $i, j$ in $\{1, \dots, n\}$ such that $\psi_i$ and $\psi_j$ are both not associate to $t$. Note that $\alpha(\psi_i\psi_j)  \geq 2$. Thus, $\frac{\psi_i\psi_j}{t} \in \IntR(K,D)$. We have that $\alpha\left(\frac{\psi_i\psi_j}{t} \right) = \alpha(\psi_i\psi_j) - 1 \geq 1$. If $2 \leq \alpha(\psi_i\psi_j) <3 $, then $\frac{\psi_i\psi_j}{t}$ is irreducible. If $\alpha(\psi_i\psi_j) \geq 3$, then $2 \in \mathcal{L}_{\IntR(K,D)}\left(\frac{\psi_i\psi_j}{t}\right)$, so we can write $\frac{\psi_i\psi_j}{t} = \rho_1\rho_2$ for some irreducible elements $\rho_1, \rho_2 \in \IntR(K,D)$.  Now, we observe that
	\[
	\psi_i\psi_j = \begin{cases}
		t \cdot \frac{\psi_i\psi_j}{t}, & \text{if $2 \leq \alpha(\psi_i\psi_j) <3 $},\\
		t \cdot \rho_1 \cdot \rho_2, & \text{if $\alpha(\psi_i\psi_j) \geq 3$}.
	\end{cases}
	\]
	This means that there is a factorization of distance at most $3$ from $d = \psi_1 \cdots \psi_n$ such that there are strictly more factors associate to $t$. This shows that there is a $3$-chain between any two factorizations of $d$ in $\IntR(K,D)$, showing that $c_{\IntR(K,D)}(d) \leq 3$. Since $\abs{\mathcal{L}_{\IntR(K,D)}(d)} > 1$, we must have $c_{\IntR(K,D)}(d) = 3$ as the distance between two factorizations of different lengths is at least $3$. 
\end{proof}

\bibliographystyle{amsalpha}
\bibliography{references}
\end{document}